\newtheorem{theorem}{Theorem}
\newtheorem{example}{Example}
\newtheorem{lemma}{Lemma}
\newtheorem{definition}{Definition}
\newtheorem*{def1}{Definition \(\tilde{\bf1}\)}
\newtheorem{proposition}{Proposition}
\newtheorem{remark}{Remark}
\newtheorem{question}{Question}
\newtheorem*{acknowledgement}{Acknowledgement}
\newcommand{\seg}{\mathrm{seg\,}}
\newcommand{\sag}{\mathrm{sag\,}}
\newcommand{\dist}{\mathrm{dist}}
\newcommand{\R}{\mathbb{R}}
\newcommand{\rad}{\mathrm{rad\,}}
\newcommand{\diam}{\mathrm{diam\,}}
\newcommand{\vol}{\mathrm{vol\,}}
\newcommand{\m}{\mathrm{md}}
\newcommand{\crit}{\mathcal{C}}
\newcommand{\cri}{\mathrm{cr}\,}
\begin{document}

\title{Sagitta, Lenses, and Maximal volume}

\author{Curtis Pro}
\address{{Department of Mathematics, University of Toronto, ON M5S 2E4} }
\email{cpro@math.toronto.edu}
\subjclass{53C20}
\keywords{Lower Curvature bound, Comparison Geometry, Maximal Volume Diffeomorphism Stability, Alexandrov Geometry}


\begin{abstract}
  We  give a characterization of critical points that allows us to define a metric invariant on  all Riemannian manifolds \(M\) with a lower sectional curvature  bound  and an upper radius bound. We show there is a uniform upper volume bound for all such manifolds with an upper bound on this invariant. We generalize  results by Grove and Petersen and by Sill, Wilhelm, and the author by showing any such \(M\) that has volume sufficiently close to this upper bound is diffeomorphic to the standard sphere \(S^{n}\) or a standard lens space \(S^n/\mathbb{Z}_m\) where \(m\in\{2,3,\ldots\}\) is no larger than an a priori constant.  
\end{abstract}
\maketitle

Given a point \(p\) in a metric space \(X\), the \textit{radius} of \(X\) is defined to be the number \[\rad X: = \inf_{p\in X} \sup_{q\in X}\dist(p,q).\] Let \(k\in\R\) and \(S^n_k\) denote the simply connected space form of constant sectional curvature \(k\). If \(M^n\) is a Riemannian manifold with sectional curvature bounded below by \(k\),  it follows from usual volume comparison that  \[\vol M\leq \vol D^n_k(\rad M)\] where \(D^n_k(r)\subset S^n_k\) denotes the disk of radius \(r\). This gives a uniform upper volume bound for the class \(\mathcal M^n_{k,r}\) of all Riemannian \(n\)-manifolds \(M\) with \(\sec M\geq k\) and \(\rad M\leq r\).

In \cite{GrovePet3}, Grove and Petersen showed  if \(r\leq\frac{1}{2}\diam S^n_k\) (\(=\infty\) if \(k\leq0\) and \(\frac{1}{2}\pi/\sqrt k\) otherwise) is a real number, then  any \(M\in \mathcal M^n_{k,r}\) with volume sufficiently close to \(D^n_k(r)\),\(\) must topologically be a sphere or real projective space.

In this work we define a  metric invariant of \(M\in \mathcal M^n_{k,r}\) denoted   by \(\sag_r M\) which we called the \(r\)-\textit{sagitta} of \(M\). The definition of \(\sag_rM\) is based on a characterization of critical points given in Proposition \ref{introprop} below and shares similarities with the following notion from classical geometry. If \(C\) is a circular arc of radius \(r\), the \textit{sagitta} of \(C\) is defined to be the distance \(h\) in the plane between the midpoint of the chord through the boundary of \(C\) and the midpoint of \(C\). \(\)

In an analogous way that \(D^n_k(r) \) serves as an extremal model for manifolds in \(\mathcal M^n_{k,r}\), given \( \tilde a,\tilde b\in S^n_k\) and \(h,r\in (0,\frac{1}{2}\diam S^n_k]\) with \(|\tilde a\tilde b| = 2(r-h), \) the lens-like set \[L^n_k(h,r):=D^n_k(\tilde a_,r)\cap D^n_k(\tilde b,r)\subset S^n_k\] (See Figure \ref{figurelens}) serves as an extremal model for manifolds in \(\mathcal M^n_{k,r}\) with the additional constraint \(\sag_rM\leq h\). More precisely, our main result is
\begin{theorem}
\label{mainthrm}
 Let \(n\geq 2,\) \(k\in \R\), and \(h,r\in (0,\frac{1}{2}\diam S^n_k]\) be any real numbers with \(h\leq r\). Let \(\mathcal M^n_{k,r,h}\) denote the class of all Riemannian \(n\)-manifolds satisfying  \[\left\{\begin{array}{lc}
k\leq \sec M \\
\rad M\leq r\\
\sag_r M \leq h

\end{array}\right.\]
then,\begin{enumerate}[(1)]\item \label{p1}For every \(M\in \mathcal M^n_{k,r,h}\) the volume of \(M\) satisfies
 \[\vol M\leq \vol L^n_k(h,r).\] 
\item  \label{p2}There is an \(\varepsilon(n,k,h,r)>0\), and an integer  \(c =c(n,k,h,r)\geq 2\) so that for every \(M\in \mathcal M^n_{k,r,h}\) if \[\vol M>\vol L_n^k(h,r) - \varepsilon,\] then \(M\) is diffeomorphic to  either
\begin{enumerate}[(a)]\item \(S^n,\)  \item\(\R P^n\) if \(n\) is even, or \item A Lens space \(S^n/\mathbb{Z}_m\) where \(2\leq m\leq c\) if \(n\) is odd.
\end{enumerate}
\item \label{p3}Each manifold \(N\) in the conclusion of Part (2) admits a sequence of metrics in \(\mathcal M^n_{k,r,h}\) with volume converging to \(\vol L^n_k(h,r)\), but  no \(N\in \mathcal M^n_{k,r,h}\) satisfies the equality \(\vol N = \vol L^n_k(h,r)\) unless \(k>0\),  \( r =\frac{1}{2}\diam S^n_k\), and \(h\) divides \(\frac{1}{2}\diam S^n_k\). 
\end{enumerate}
\end{theorem}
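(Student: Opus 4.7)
The plan is to establish the three parts in order: Part (1) by Toponogov-style volume comparison on the two-ball cover supplied by the sagitta hypothesis; Part (2) by passing to a Gromov--Hausdorff limit, identifying it via volume rigidity, and applying Perelman--Kapovitch stability; and Part (3) by explicitly exhibiting near-extremal metrics on each model and analyzing when strict equality is possible.

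For Part (1), I invoke the characterization of $\sag_r M$ in Proposition \ref{introprop} to promote the hypothesis $\sag_r M \leq h$ into the existence of two points $a, b \in M$ with $\dist(a,b) \geq 2(r-h)$ and $M \subset \overline{B_r(a)} \cap \overline{B_r(b)}$. Given such a configuration, Toponogov's hinge comparison makes every geodesic triangle $a q b$ with $q \in M$ no thicker than the corresponding model triangle in $S^n_k$ of the same sidelengths, so the developed image of $M$ sits inside the model lens $L^n_k(h,r)$. A Bishop--Gromov disintegration of $\vol M$ along geodesics from $a$ bounds the angular Jacobian by the model Jacobian, while the angular range over which these geodesics reach distance at most $r$ from $a$ \emph{and} remain within distance $r$ of $b$ is controlled by the corresponding range in the lens. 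Integrating yields $\vol M \leq \vol L^n_k(h,r)$.

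For Part (2), I argue by contradiction. If the conclusion fails, I pick $M_i \in \mathcal M^n_{k,r,h}$ with $\vol M_i \to \vol L^n_k(h,r)$ none of which is diffeomorphic to a space on the allowed list. Gromov's precompactness theorem yields a subsequential limit $X$, an Alexandrov space with curvature $\geq k$, $\rad X \leq r$, $\sag_r X \leq h$, and $\vol X = \vol L^n_k(h,r)$. Equality in the Part (1) comparison forces $X$ to be two copies of $L^n_k(h,r)$ glued along their $(n-1)$-dimensional cuspidal boundary; for $k>0$ this $X$ realizes $S^n_k/\mathbb{Z}_m$ for some $m \geq 2$ determined by how many copies of the lens fit around the cuspidal axis, and for $k \leq 0$ an analogous limiting picture emerges. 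Perelman's stability theorem then gives $M_i$ homeomorphic to $X$ for large $i$, and Kapovitch's diffeomorphism stability promotes this to a diffeomorphism, yielding the list (a)--(c). The constant $c(n,k,h,r)$ comes from the crude estimate $m \leq \vol S^n_k / \vol L^n_k(h,r)$, since each $\mathbb{Z}_m$-fundamental domain must contain essentially one lens worth of volume.

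For Part (3), each candidate $N$ admits metrics in $\mathcal M^n_{k,r,h}$ built by doubling or tiling copies of the lens: use the doubled lens for $S^n$ and for $\R P^n$ (even $n$), or an $m$-fold symmetric tiling for the odd-dimensional lens spaces, and smooth the cuspidal boundary on a small scale to restore a smooth metric with $\sec \geq k$. As the smoothing scale shrinks, the metric's volume converges to $\vol L^n_k(h,r)$. Strict equality is obstructed in general because doubling across the cuspidal boundary introduces an angle defect that can only be absorbed smoothly when $m$ copies of the lens tile $S^n_k$ without remainder, giving precisely the conditions $k>0$, $r = \tfrac12 \diam S^n_k$, and $h \mid \tfrac12 \diam S^n_k$ stated. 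The main obstacle will be Part (2), where identifying the Alexandrov limit $X$ and ruling out stray non-manifold limits requires a careful critical-point analysis near the cusp locus, combined with extremality arguments that exploit the bounds $\sec \geq k$ and $\rad \leq r$ simultaneously in the doubled lens model.
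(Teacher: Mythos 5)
Your Part (1) mechanism is not what the sagitta hypothesis delivers, and the resulting gap is serious. The condition $\sag_r M\leq h$ produces a pair $p,q\in M$ that are mutually critical with $\cri_p(q)\leq r$ and $\dist(p,q)\leq h$. It does \emph{not} produce two points $a,b\in M$ with $M\subset\overline{B_r(a)}\cap\overline{B_r(b)}$; the disks that appear in Proposition~\ref{introprop} live in the model space $B^n_k(p)\subset T_pM$, not in $M$, and they are indexed by the (generally many) directions in $\Uparrow_p^{A_{r,h}(p)}$, which form a $\pi/2$-net in $U_pM$. One gets $\seg(p)\subset\bigcap_{v} D^n_k(\tilde c_v, r)$ for a $\pi/2$-net $\{\tilde c_v\}$ on the metric sphere $S(\tilde p, r-h)$, and the passage from this intersection over a full $\pi/2$-net to the intersection of just \emph{two} antipodal disks (which is $L^n_k(h,r)$) is the content of Lemma~\ref{gpvolsnk}, resting on the appendix of Grove--Petersen; it is not automatic and is not a ``Bishop--Gromov disintegration along geodesics from $a$.'' Without this step the volume bound is unproved, and without the equality discussion in that lemma the rigidity input for Part~(2) is also missing.

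In Part (2) there are two further problems. First, Perelman/Kapovitch stability yields only a \emph{homeomorphism} between $M_i$ and the limit $X$; there is no off-the-shelf ``diffeomorphism stability'' that finishes the job. The paper must invoke a separate result (Theorem~\ref{diffthrm}) that exploits the explicit singular structure of the limits — $X$ is smooth away from an isometrically embedded codimension-two locus ($\pi_m(S_0)$, or $p(S_0)\cup p(D^P_1)\cup p(D^P_2)$) — to upgrade to a diffeomorphism, and that requires the detailed description of $X$ as a quotient $L^n_k(h,r)/(\tilde u\sim R_{H_0}\circ\phi_m(\tilde u))$ or $L^n_k(h,r)/(\tilde u\sim R_P(\tilde u))$ obtained via Lemmas~\ref{toplemma}--\ref{manifoldlemma} (in particular the careful isotropy analysis that forces the $\mathbb Z_m$-action to be free). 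Your description of $X$ as ``two copies of $L^n_k(h,r)$ glued along a cuspidal boundary'' is not accurate for the $L^n_k(h,r,\phi_m)$ models. Second, your proposed bound $c\leq \vol S^n_k/\vol L^n_k(h,r)$ is vacuous when $k\leq 0$ (the numerator is infinite), whereas the theorem asserts a finite $c$ for all $k\in\R$; the paper instead derives $c$ from a local Bishop--Gromov ratio at a point of $S_0$ (Lemma~\ref{toplemma}(4)). Part~(3) is in the right spirit, but the whole argument would need to be rebuilt on the corrected Part~(1) and the correct identification of $X$.
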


   Given a point \(q\in M\) and subset \(A\subset M\), by \(\Uparrow_q^A\) we mean  the set of all unit vectors in \(T_qM\) tangent to segments from \(q\) to \(p\in A\). Recall that a point \(q\in M\) is said to be critical to \(p\in M\), in the sense of \cite{GroveShio}, provided  \(\Uparrow_q^p\subset S^{n-1}_1\) is a  \(\pi/2\)-net.  Given \(p\in M\), we let \(\crit(p) \) denote all points \(q\neq p\) that are critical to \(p\).

In what follows, we will  assume \(M\) is compact,  \(\sec M\geq k\in \R\), and \(M\) is not isometric to \(S^n_k\). This implies for every \(p\in M\),  \(\exp_p\) will  be surjective on the ball \(B(o_p,\diam S^n_k)\subset T_pM\), where \(o_p\) is the origin of \(T_pM\). Metrically, we   identify \(B(o_p,\diam S^n_k)\subset T_pM\)  with the ball \(B^n_k(\diam S^n_k)\subset S^n_k\) and denote this set by \(B^n_k(p)\). We note, by \cite{GroveShio},  if \(q\in \crit(p)\) satisfies \(\dist(p,q)>\frac{1}{2}\diam S^n_k\), then \(\crit(p) =\{q\}\).

\begin{proposition}\label{introprop} Let \(p\in M\) and suppose all \(q\in \crit(p)\) satisfy \(\dist(p,q)\leq \frac{1}{2}\diam S^n_k\).  Then for each \(q\in \crit(p)\),  there is a number \(\cri_p(q)\in I :=[\dist(p,q),\frac{1}{2}\diam S^n_k]\) and for every \(v_q\in \Uparrow_p^q\), there is a closed convex subset \(D_{v_q}\subset B^n_k(p) \) so that:  \begin{enumerate}[(1)]\label{p1}\item The exponential map \begin{eqnarray*}\exp_p:\bigcap_{q\in\crit(p),v_q\in \Uparrow_p^{q} }D_{v_q}\rightarrow M\end{eqnarray*} is surjective.  
\item\label{p2} For  all \(v_q,w_q\in\Uparrow_p^q\), the sets \(D_{v_q}\) and \(D_{w_q}\) are isometric.
 \item \label{p3}If  \(\tilde \gamma_{v_q}\) is the geodesic in \(B^n_k(p)\) defined by \(\tilde\gamma_{v_q}(0)=o_p\) and \(\tilde\gamma'_{v_q}(0) = v_q\), then  if \(\cri_p(q)<\frac{1}{2}\diam S^n_k, D_{v_q}\) is the disk  given by \[D_{v_q} =D^n_k(\tilde\gamma_{v_q}(\dist(p,q)-\cri_p(q)),\cri_p(q)), \]  or  if \(\cri_p(q)=\frac{1}{2}\diam S^n_k\), \(D_{v_q}\) is the closed half space given by \[D_{v_q}=\mathrm{cl}\left(\bigcup_{t\in I}D^n_k\left(\tilde\gamma_{v_q}(\dist(p,q)-t),t\right)\right)\] 
 \item \label{p4}The number \(\cri_p(q)\) depends only on \(k\) and the distance functions from \(p\) and \(q\).
\item \label{p5}\(\cri_p(q) = \dist(p,q)\) if and only if \(q\) is a point at maximal distance from \(p\).\end{enumerate}
\end{proposition}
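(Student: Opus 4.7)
My plan is to define $\cri_p(q)$ as the smallest radius $r \in I := [\dist(p,q), \tfrac{1}{2}\diam S^n_k]$ for which the balls described in Part~(3) contain every $\exp_p$-lift of every point of $M$, then verify the five properties. The critical point condition at $q$ combined with both the angle and hinge forms of Toponogov's theorem is the essential input.

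Set $d := \dist(p,q)$. For $v_q \in \Uparrow_p^q$ and $r \in I$ let $\tilde c_r := \tilde\gamma_{v_q}(d - r)$ (with negative parameter when $r > d$) and $D_r^{v_q} := D^n_k(\tilde c_r, r)$, replaced by the half-space of Part~(3) when $r = \tfrac{1}{2}\diam S^n_k$. These balls are convex, have $\tilde q := \tilde\gamma_{v_q}(d)$ on their boundaries, and the whole family is tangent at $\tilde q$ to the hyperplane through $\tilde q$ perpendicular to $v_q$. Given $x \in M$ with $s := \dist(p,x)$ and $t := \dist(q,x)$, the critical condition (that $\Uparrow_q^p$ is a $\pi/2$-net) supplies, for each $w \in \Uparrow_q^x$, some $v \in \Uparrow_q^p$ with $\angle(v,w)\leq \pi/2$; Toponogov angle comparison then gives $\tilde\angle_q \leq \pi/2$, where $\tilde\angle_q$ is the angle at $\tilde q$ in the $S^n_k$-triangle with sides $(d,s,t)$. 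Defining a model point $\tilde x^{\mathrm{comp}}(s,t) \in S^n_k$ at distance $s$ from $\tilde p$ and $t$ from $\tilde q$, this says $\tilde x^{\mathrm{comp}}(s,t)$ lies on the $\tilde p$-side of the tangent hyperplane at $\tilde q$, hence in the half-space $D_{\diam/2}^{v_q}$.

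I can thus define
\[
\cri_p(q) := \inf\bigl\{r \in I : \tilde x^{\mathrm{comp}}(s,t) \in D_r^{v_q}\ \text{for every pair}\ (s,t)\ \text{realized by}\ M\bigr\}.
\]
The set of realized pairs depends only on $\dist(p,\cdot)$ and $\dist(q,\cdot)$, while $\{D_r^{v_q}\}_r$ depends only on $k$ and $d$, so Part~(4) is immediate, and the half-space always works so $\cri_p(q) \in I$. For surjectivity in Part~(1), let $\sigma$ be any minimizing segment from $p$ to $x$ and put $\tilde x := \sigma'(0)\cdot s \in B^n_k(p)$. Toponogov's hinge comparison at $\tilde p$ gives $\dist_{S^n_k}(\tilde q, \tilde x) \geq t$, so on the $s$-sphere about $\tilde p$ the lift $\tilde x$ has angular displacement from $v_q$ at least that of $\tilde x^{\mathrm{comp}}(s,t)$. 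A direct law-of-cosines computation shows that for any $r \geq d$, distance from $\tilde c_r$ to this $s$-sphere is monotonically \emph{decreasing} in angular displacement from $v_q$; consequently $\tilde x^{\mathrm{comp}}(s,t) \in D_{\cri_p(q)}^{v_q}$ forces $\tilde x \in D_{\cri_p(q)}^{v_q}$. Since this holds uniformly in $q \in \crit(p)$ and $v_q \in \Uparrow_p^q$, the lift $\tilde x$ belongs to the full intersection.

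Part~(2) follows as the rotation of $B^n_k(p)$ about $\tilde p$ sending $v_q$ to $w_q$ is an isometry carrying $D_{v_q}$ to $D_{w_q}$, and Part~(3) is built into the construction. For Part~(5): $\cri_p(q) = d$ forces $D_{v_q} = D^n_k(\tilde p, d)$, so $s \leq d$ for every realized $(s,t)$, i.e.\ $q$ is at maximal distance from $p$; conversely, if $q$ is at maximal distance then every $\tilde x^{\mathrm{comp}}(s,t)$ has $s \leq d$ and sits in $D^n_k(\tilde p, d)$, giving $\cri_p(q) = d$. The main obstacle will be the monotonicity at the end of the third paragraph: the disks $D_r^{v_q}$, whose centers sit \emph{behind} $\tilde p$ along the $v_q$-geodesic when $r > d$, grow asymmetrically around $\tilde q$, and one must confirm that the critical condition confines the realized $(s,t)$ pairs precisely into the regime where this monotonicity keeps the actual lift $\tilde x$ at least as deep inside $D_{\cri_p(q)}^{v_q}$ as the comparison point $\tilde x^{\mathrm{comp}}(s,t)$.
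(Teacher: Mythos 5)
Your proof is correct and reaches the same conclusion by a closely related but differently packaged route. The paper proceeds through a modified-distance formalism: Proposition~\ref{proposition1} characterizes disks $D^n_k(\tilde a,r)$ having $\tilde q$ on their boundary as sublevel sets of the ratio $(\m_k(|\tilde p\tilde x|)-\m_k(|\tilde p\tilde q|))/\m_k(|\tilde q\tilde x|)$; the eccentricity $\lambda_p(q)$ of Definition~\ref{d2} is the sup of this ratio over $M$; Proposition~\ref{criticalpointprop} shows $\lambda_p(q)<\infty$ iff $q$ is critical; and $\cri_p(q)$ is then extracted from $\lambda_p(q)$ through the strictly increasing function $r\mapsto \m'_k(r-\dist(p,q))/\m'_k(r)$. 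Your $\cri_p(q)$ is the same number --- the defining equation for $r_{\lambda_p(q)}$ is exactly the statement that it is the infimal radius whose disk contains every comparison point --- but you reach it directly from angular monotonicity of the law of cosines on metric spheres about $\tilde p$, bypassing the ratio machinery. What that machinery buys the paper is reuse: the ratio and Proposition~\ref{betaprop} drive the volume estimate of Section~\ref{volumesection} and the definitions in Section~\ref{rsagdefsec}, whereas your version is more self-contained for this one proposition. One point to tighten: your stated monotonicity (distance from $\tilde c_r$ on the $s$-sphere decreases as the angle from $v_q$ grows) applies whenever $\tilde c_r=\tilde\gamma_{v_q}(\dist(p,q)-r)$ is a point of $S^n_k$. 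This covers every $r$ when $k>0$, where the half-space is just the hemisphere $D^n_k(\tilde c_r,r)$ at $r=\tfrac12\diam S^n_k$; but when $k\le 0$ and $\cri_p(q)=\tfrac12\diam S^n_k=\infty$ there is no center, and you need the companion monotonicity that at fixed $|\tilde p\tilde x|$, enlarging $|\tilde q_v\tilde x|$ preserves $\angle\tilde p\tilde q_v\tilde x\le \pi/2$. This is a one-line law-of-cosines check --- it is what Lemma~\ref{halfspacelemma} supplies through Proposition~\ref{betaprop} --- but it is a genuinely different inequality from the one you wrote down, so it deserves its own sentence.
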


With this Proposition we now make the following

\begin{definition}\label{lamesagdef} Suppose \(M\) satisfies \(\sec M\geq k\) and \(\rad M\leq r\). We let \[\sag_rM:= \inf\{\dist(p,q)\mid p \text{ and } q \text{ are mutually critical and }\cri_p(q)\leq r\}\]  and call  this number  the \(r\)-\emph{sagitta of} \(M\).
\end{definition}

\begin{remark}   When \(h=r\) we have \(L^n_k(r,r) =D^n_k(r)\) and we will show  that the constant \(c \) of Theorem \ref{mainthrm} is equal to \(2\).  However, the author does not know if \(\mathcal M^n_{k,r}\subset \mathcal M^n_{k,r,r}\) and therefore Theorem \ref{mainthrm} may not give a complete generalization of Theorem A in \cite{GrovePet3}. 

In Section \ref{rsagdefsec}, we give a weaker, but more technical alternative to Definition \ref{lamesagdef}. We denote this  weaker invariant by \(\widetilde\sag_rM\) and let \(\widetilde {\mathcal M}^n_{k,r,h}\) denote the class of all Riemannian \(n\)-manifolds satisfying  \[\left\{\begin{array}{lc}
k\leq \sec M \\
\rad M\leq r\\
\widetilde\sag_r M \leq h
\end{array}\right..\]
We will show that \(\mathcal M^n_{k,r,h}\subset \widetilde{\mathcal M}^n_{k,r,h}\) and that Theorem \ref{mainthrm} actually holds for the larger class \(\widetilde{\mathcal M}^n_{k,r,h}\). In addition, we show \(\mathcal M^n_{k,r}\subset  \widetilde{\mathcal M}^n_{k,r,r}\), thus the topological statement of Theorem \ref{mainthrm} for this larger class reproduces Theorem A in \cite{GrovePet3} and the smooth statement reproduces the main theorem in \cite{PSW}.

The advantage of Definition \ref{lamesagdef} is that it requires fewer prerequisites to state, and up to a possible minor technical detail, it is the desired constraint. \end{remark}

\subsection*{Examples }

  In this subsection we give examples of Alexandrov  spaces \(X\)  that are the model spaces for manifolds in \(\widetilde{\mathcal M}^n_{k,r,h}\)  with volume almost  extremal.
More precisely, we will show the first example is the only space that can arise as a limit of manifolds \(M\in \mathcal M^n_{k,r,h}\) that  have volume converging to \(\vol L^n_k(h,r)\), while both examples can occur as such limits of manifolds in \(\widetilde{\mathcal M}^n_{k,r,h}\) . 

\subsubsection*{Notation}{(See Figures \ref{figurelens} and \ref{reflections})}

 Assume \(n\geq 2, h\leq r\in (0,\frac{1}{2}\diam S^n_k]\),  let \(\tilde a_1,\tilde a_2,\tilde q_1,\tilde q_2\in S^n_k\) such that \(|\tilde a_1\tilde a_2| = 2(r-h)\), \(|\tilde p \tilde q_i| = h\), and \(|\tilde a_i\tilde q_i | = r\) and  set \(L^n_k(h,r)=D^n_k(\tilde a_1,r)\cap D^n_k(\tilde a_2,r)\). 
If \(h<r\),  let \(H_0\)  be  the totally geodesic hyperplane given by \[H_0:=\{\tilde u\mid|\tilde u\tilde a_1| = |\tilde u\tilde a_2|\}\subset S^n_k.
\] and set  \(P\subset S^n_k\) to be any hyperplane which contains the geodesic through \(\tilde a_1\) and \(\tilde a_2\).
If \(h=r\), i.e., \(L^n_k(h,r) = D^n_k(\tilde a_1,r),\)   take \(P=H_0\)  to be any hyperplane through \(\tilde a_1 = \tilde a_2\). 
 
 Define \(R_{H_0}:S^n_k\rightarrow S^n_k\) and \(
R_{P}:S^n_k\rightarrow S^n_k\) to 
be reflections over the hyperplanes \(H_0\) and \(P\), respectively.

For any \(m\in \mathbb{Z}_+\), let \[C(n-2,m) = \{\phi_{m_i}\}_{i\in I_m}\subset O(n-1)\]be all isometries of \(S^{n-2}\) of order \(m\) that, if \(m>1,\) generate a cyclic group \(\mathbb{Z}_m\) that acts freely on \(S^{n-2}\).

 Since \(\mathbb{Z}_2\) is the only group which acts freely on even dimensional spheres,    \(C(n-2,m)=\emptyset\) if \(m> 2\) and \(n\) is even. Because of this, we will always implicitly assume \(n\) is odd whenever making reference to a \(\phi_m\in C(n-2, m)\) where \(m>2\).

\begin{figure}

\begin{tikzpicture}[line join=round,>=triangle 45,x=0.7cm,y=0.7cm]
\clip(-5,-3) rectangle (5,3);
\draw [shift={(2,0)},line width=1pt]  plot[domain=2.3:3.98,variable=\t]({1*3*cos(\t r)+0*3*sin(\t r)},{0*3*cos(\t r)+1*3*sin(\t r)});
\draw [shift={(2,0)},dotted]  plot[domain=-2.3:2.3,variable=\t]({1*3*cos(\t r)+0*3*sin(\t r)},{0*3*cos(\t r)+1*3*sin(\t r)});
\draw [shift={(-2,0)},dotted]  plot[domain=0.84:5.44,variable=\t]({1*3*cos(\t r)+0*3*sin(\t r)},{0*3*cos(\t r)+1*3*sin(\t r)});
\draw [shift={(-2,0)},line width=1pt]  plot[domain=-0.84:0.84,variable=\t]({1*3*cos(\t r)+0*3*sin(\t r)},{0*3*cos(\t r)+1*3*sin(\t r)});
\draw [shift={(0,0)},line width=1pt]  plot[domain=0:3.14,variable=\t]({0*2.23*cos(\t r)+-1*0.2*sin(\t r)},{1*2.23*cos(\t r)+0*0.2*sin(\t r)});
\draw [shift={(0,0)},line width=1pt,dotted ]  plot[domain=-3.14:0,variable=\t]({0*2.23*cos(\t r)+-1*0.2*sin(\t r)},{1*2.23*cos(\t r)+0*0.2*sin(\t r)});
\draw [rotate around={90:(-2,0)},dotted] (-2,0) ellipse (2.1cm and 0.1cm);
\draw [rotate around={-90:(2,0)},dotted] (2,0) ellipse (2.1cm and 0.1cm);
\draw (2,0) node{\(\cdot\)};
\draw (2,0) node[right]{\tiny\(\tilde a_2\)};
\draw (-2,0) node{\(\cdot\)};
\draw (-2,0) node[right]{\tiny\(\tilde a_1\)};
\draw (0,0) node{\(\cdot\)};
\draw (0,.2) node{\tiny\(\tilde p\)};
\draw (1.3,0) node{\tiny\(\tilde q_1\)};
\draw (-.7,0) node{\tiny\(\tilde q_2\)};
\draw (-1,0) node{\huge\(\cdot\)};
\draw (1,0) node{\huge\(\cdot\)};
\draw (-5,0) --node[midway,above]{\tiny\(r\)}(-2,0);
\draw (0,0) --node[midway,above]{\tiny\(h\)}(1,0);
\draw (-3,-2.2) node{\tiny\(D^n_k(\tilde a_1,r)\)};
\draw (3,-2.2) node{\tiny\(D^n_k(\tilde a_2,r)\)};
\end{tikzpicture}
\caption{\(L^n_k(h,r)\) depicted as the bold region above.   \label{figurelens}  }
\end{figure}
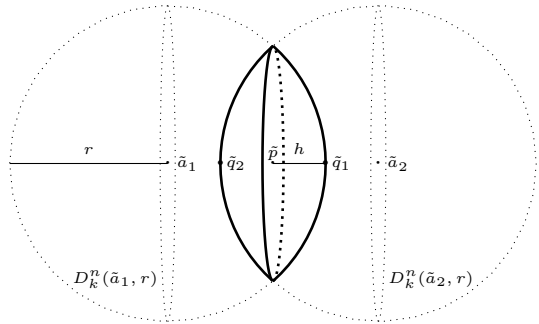

\begin{example}\label{example1} 
 
If \(n\geq2,\) any \(\phi_{m}\in C(n-2,m)\), viewed as an isometry of \(S_0\), extends to an isometry of \(L^n_k(h,r)\) that fix the points \(\tilde q_1\) and \(\tilde q_2\). Therefore, we set  \[L^n_k(h,r,\phi_{m}):= L^n_k(h,r)/(\tilde u\sim R_{H_0}\circ \phi_{m}(\tilde u)) \text{ with }\tilde u\in \partial L^n_k(h,r)\]  and note \(L^n_k(h,r,\phi_{m})\) is an Alexandrov space with curvature bounded below by \(k\), and volume equal to \(\vol L^n_k(h,r)\).   
\end{example} 
\begin{remark}  If \(n\geq2\) and \(\phi_m\in C(n-2,m)\), by scaling \(S_0\) to have constant curvature 1, we can identify the unit sphere \(S^{n}\) in \(\mathbb{R}^{n+1}\) with the spherical join \(S^1\ast S_0\) where \(S^1\) is the unit circle in \(\mathbb{C}\).  We can then  identify \(L^{n}_k(h,r,\phi_{m})\) as the fundamental domain of the free and orthogonal \(\mathbb{Z}_m\)-action on \(S^{n}\) generated by \(\psi\in O(n+1)\) where \(\psi(z,x) = (e^{\frac{2\pi i}{m}}z,\phi_m(x))\) (see Section \ref{convergencesection}). If \(m=2\), \(L^n_k(h,r,\phi_{m})\cong \R P^n\) and if \(m=1\), the degenerate Lens space \(L^n_k(h,r,\mathrm{id})\) is topologically a sphere.   We also note that \(L^n_k(r,r,\mathrm{id})\) is the ``Curvature \(k\) Purse" denoted \(P^n_{k,r}\) and \(L^n_k(r,r,-\mathrm{id})\cong\mathbb{R}P^n\) is the ``Curvature \(k\) Crosscap" denoted \(C^n_{k,r}\)  that were constructed in \cite{GrovePet3}. 

\end{remark}

\begin{example}
 (see Figure \ref{figureP}) Define \[P^n_k(h,r):= L^n_k(h,r)/(\tilde u\sim R_P(\tilde u)) \text{ where }  \tilde u\in \partial L^n_k(h,r)\] and note \(P^n_k(h,r)\cong S^n\) is an Alexandrov space with curvature bounded below by \(k\), and volume equal to \(\vol L^n_k(h,r)\). We remark that when \(h=r\), this example coincides with \(L^n_k(r,r,\mathrm{id})\).  \end{example}

  In Section \ref{section1}  we establish Proposition \ref{introprop}. In Section \ref{rsagdefsec}, we define \(\widetilde\sag_rM\). In Section \ref{volumesection} we establish the volume bound in Part \ref{p1} of Theorem \ref{mainthrm}. In Section \ref{convergencesection}, using the same ideas as presented in \cite{GrovePet3}, we prove the following convergence theorem \begin{theorem}\label{convergethrm} Let \(n\geq 2\), \(k\in\R\), and  \(h,r\in(0,\frac{1}{2}\diam S^n_k]\) with \(h\leq r\) be real numbers. There is an integer \(c = c(n,k,h,r)>0\) so that if \(\{M_i\}\subset\widetilde{\mathcal M}^n_{k,r,h}\) is a sequence of Riemannian \(n\)-manifolds with  \(\vol M_i\rightarrow \vol L^n_k(h,r)\),  for some \(m\in \{1,\ldots,c\}\)   a subsequence of \(\{M_i\}\) must converge to either \(P^n_k(h,r)\) or \(L^n_k(h,r,\phi_{m})\) for some \(\phi_{m}\in C(n -2,m)\).\end{theorem}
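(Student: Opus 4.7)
The plan is to follow the Grove--Petersen strategy from \cite{GrovePet3}, combining Gromov's precompactness with a rigidity analysis of Part (1) of Theorem \ref{mainthrm}. First, I would observe that \(\widetilde{\mathcal M}^n_{k,r,h}\) has a uniform lower curvature bound \(k\) and uniform diameter bound at most \(2r\), so Gromov's precompactness theorem yields a subsequence \(M_{i_j}\) converging in the Gromov--Hausdorff sense to a compact Alexandrov space \(X\) with curvature \(\geq k\). Since \(\vol M_{i_j}\) converges to the positive number \(\vol L^n_k(h,r)\), the sequence does not collapse, hence \(\dim X = n\), and by continuity of volume on non-collapsing sequences \(\vol X = \vol L^n_k(h,r)\).

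Next, I would verify that \(\rad\) and \(\widetilde\sag_r\) are lower semicontinuous under non-collapsed GH convergence, so that \(X\) itself lies in the Alexandrov extension of \(\widetilde{\mathcal M}^n_{k,r,h}\) and therefore achieves equality in the volume bound of Part (1). I would then revisit the proof of Part (1) from Section \ref{volumesection} to extract rigidity: equality must force the existence of mutually critical points \(p,q\in X\) with \(\dist(p,q) = \widetilde\sag_r X = h\) for which the exponential map of Proposition \ref{introprop} becomes a measure-preserving quotient onto \(X\), with domain exactly an isometric copy of \(L^n_k(h,r)\). Consequently \(X\) is realized as a metric quotient of \(L^n_k(h,r)\) by an isometric self-identification of its boundary \(\partial L^n_k(h,r)\).

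I would then classify such identifications. When \(h<r\), \(\partial L^n_k(h,r)\) consists of two congruent spherical caps meeting along an equatorial copy of \(S^{n-2}\); for the quotient to be a boundaryless Alexandrov space, the identification must send each boundary point to another boundary point with matching space of directions. Modulo the symmetries of \(L^n_k(h,r)\), the only such identifications compatible with \(\rad \leq r\) and \(\widetilde\sag_r \leq h\) are reflection by \(R_P\), giving \(P^n_k(h,r)\), and the twisted identification \(R_{H_0}\circ \phi_m\) with \(\phi_m \in C(n-2,m)\), giving \(L^n_k(h,r,\phi_m)\); the case \(h=r\) degenerates the two constructions into a single family. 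Finally, since the spherical-join description in the remark after Example \ref{example1} realizes \(L^n_k(h,r,\phi_m)\) as a fundamental domain of a free orthogonal \(\mathbb{Z}_m\)-action on \(S^n\), comparing \(\vol S^n_k/m\) with \(\vol L^n_k(h,r)\) yields an explicit bound \(m \leq c(n,k,h,r)\).

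The main obstacle is the rigidity step: converting equality in the volume inequality into an explicit identification of \(X\) with a boundary gluing of \(L^n_k(h,r)\). This relies on exploiting the convex subsets \(D_{v_q}\) of Proposition \ref{introprop} and showing that, at a pair of mutually critical points realizing \(\widetilde\sag_r X = h\), the intersection appearing in Part (1) must collapse to a single \(D_{v_q}\) isometric to \(L^n_k(h,r)\) on which the exponential map is measure-preserving onto the complement of a set of measure zero. Ruling out intermediate boundary behaviors, and then confirming that the resulting gluing must belong to one of the two families above, is where the bulk of the technical work lies; once this is in hand, the count bounding \(m\) is essentially volumetric.
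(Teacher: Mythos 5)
Your overall strategy (Gromov precompactness, volume continuity on non-collapsing sequences, rigidity of the volume bound, classification of boundary gluings) is in the same spirit as the paper's, but the execution differs at several points, and one of those differences is a genuine gap.

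The bound on \(m\) is the clearest problem. You propose to bound \(m\) by ``comparing \(\vol S^n_k/m\) with \(\vol L^n_k(h,r)\),'' appealing to the spherical-join picture. But the identification of \(L^n_k(h,r,\phi_m)\) with a fundamental domain of a \(\mathbb{Z}_m\)-action on \(S^n\) is topological, not metric; the volume of \(L^n_k(h,r,\phi_m)\) equals \(\vol L^n_k(h,r)\) for \emph{every} \(m\). The only way to make a global volume comparison is to pass to the universal cover \(\widetilde X\), which has curvature \(\geq k\) and volume \(m\cdot\vol L^n_k(h,r)\); but when \(k\leq 0\) (or when the cover's diameter grows with \(m\), which it does, since the equatorial circle of the covering lens has length \(2mh\)), Bishop--Gromov gives no useful bound, so this argument cannot produce a uniform \(c(n,k,h,r)\). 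The paper instead obtains \(c\) from a purely \emph{local} Bishop--Gromov argument at a point \(\tilde q\in S_0\): the monotone quantity \(\rho\mapsto \vol B(\tilde q,\rho)/\vol(B(\tilde q,\rho)\cap L^n_k(h,r))\) yields a finite limit as \(\rho\to 0\), and Swiss-cheese volume comparison then shows \(\exp_p|_{S_0}\) is at most \(c\)-to-\(1\) (Lemma \ref{toplemma}(4)). That argument works for all \(k\in\R\).

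Two further points where you deviate, which are less fatal but worth flagging. First, you propose extending Part (1) of Theorem \ref{mainthrm} to the Alexandrov limit via lower semicontinuity of \(\rad\) and \(\widetilde\sag_r\), and then invoking rigidity in that inequality; the paper instead keeps everything at the level of the convergent segment domains \(\seg(p_i)\to\seg(p)\subset S^n_k\) and the limit map \(\exp_p:\seg(p)\to X\), and shows directly that \(\seg(p)=L^n_k(h,r)\) (Proposition \ref{seglens}). This is more elementary than re-proving the volume inequality in the Alexandrov category, which the paper never does. Second, your classification of boundary identifications is attributed to the constraints \(\rad\leq r\), \(\widetilde\sag_r\leq h\); in the paper those constraints are spent earlier, and the actual classification rests on (i) \(\exp_p\) preserving distance to \(\tilde p\), which forces the gluing to preserve or swap the two caps \(D^{n-1}_i\), and (ii) \(X\) being a topological manifold by Perelman stability, which rules out the suspensions \(\Sigma^j\R P^{n-j}\) for \(0<j<n-1\) and forces the \(S_0\)-action to be free. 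You would need to supply this manifold input explicitly; without it, the list of admissible boundary identifications is larger than the two families you name.

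So: the skeleton is right, but the bound on \(m\) as proposed fails for \(k\leq 0\), and the boundary-gluing classification needs the Perelman-stability manifold argument rather than a constraint you have already used up.
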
 The topological conclusion of Part \ref{p2} of Theorem \ref{mainthrm}, will then following from this theorem and Perelman's Stability Theorem \cite{Pstab,VK}. In Section 4 we establish Part \ref{p3} of Theorem \ref{mainthrm}. 

\begin{figure}
\begin{tabular}{cc}
\begin{tikzpicture}[line cap=round,line join=round,x=.7cm,y=.7cm]
\clip(-2.8,-3) rectangle (3,2.8);
\draw (-0.74,-2.55) node{\small\(H_0\)};
\draw (1.9,1.8) node{\small\(S_0\)};
\draw[stealth-] (-.18,1.3) .. controls +(2,0) and +(-2,0) .. +(1.7,.5);

\draw(1.6,-.1)node{\small\(R_{H_0}\)} ;

\draw[arrows={|-latex'}] (.5,0.248) -- (2.2,0.2);
\draw[arrows={latex'-|}] (-2,0.32) -- (-.6,0.28);
\fill[fill=black,fill opacity=0.05] (-0.34,2.8) -- (0.24,2.45) -- (0.26,-2.38) -- (-0.26,-2.92) -- cycle;
\draw [shift={(2,0)}]  plot[domain=2.3:3.98,variable=\t]({1*3*cos(\t r)+0*3*sin(\t r)},{0*3*cos(\t r)+1*3*sin(\t r)});
\draw [shift={(-2,0)}]  plot[domain=-0.84:0.84,variable=\t]({1*3*cos(\t r)+0*3*sin(\t r)},{0*3*cos(\t r)+1*3*sin(\t r)});
\draw [shift={(0,0)}]  plot[domain=0:3.14,variable=\t]({0*2.23*cos(\t r)+-1*0.2*sin(\t r)},{1*2.23*cos(\t r)+0*0.2*sin(\t r)});
\draw [shift={(0,0)},dotted]  plot[domain=-3.14:0,variable=\t]({0*2.23*cos(\t r)+-1*0.2*sin(\t r)},{1*2.23*cos(\t r)+0*0.2*sin(\t r)});
\draw [shift={(-2.02,0.07)},dotted]  plot[domain=0:0.75,variable=\t]({1*3.02*cos(\t r)+0.04*0.11*sin(\t r)},{-0.04*3.02*cos(\t r)+1*0.11*sin(\t r)});
\draw [shift={(-2.02,0.07)}] plot[domain=5.36:6.28,variable=\t]({1*3.02*cos(\t r)+0.04*0.11*sin(\t r)},{-0.04*3.02*cos(\t r)+1*0.11*sin(\t r)});
\draw [shift={(1.93,-0.13)},dotted]  plot[domain=2.2:3.13,variable=\t]({1*2.94*cos(\t r)+0.06*0.11*sin(\t r)},{-0.06*2.94*cos(\t r)+1*0.11*sin(\t r)});
\draw [shift={(1.93,-0.13)}] plot[domain=3.13:3.9,variable=\t]({1*2.94*cos(\t r)+0.06*0.11*sin(\t r)},{-0.06*2.94*cos(\t r)+1*0.11*sin(\t r)});

\draw (-0.34,2.8)-- (0.24,2.45);
\draw (0.24,2.45)-- (0.26,-2.38);
\draw (0.26,-2.38)-- (-0.26,-2.92);
\draw (-0.26,-2.92)-- (-0.34,2.8);
\end{tikzpicture}
 &
\begin{tikzpicture}[line cap=round,line join=round,>=triangle 45,x=.7cm,y=.7cm]
\clip(-5,-3) rectangle (3,3.2);

\draw (-2,-.5) node{\small\(P\)};
\draw (1.9,0.52) node{\small\(R_{P}\)};
\draw (2.2,-.15) node{\large\(\cdot\)};
\draw (2.6,-.3) node{\tiny\(\tilde a_2\)};
\draw (-1.8,.03) node{\large\(\cdot\)};
\draw  (-2,.29)  node{\tiny\(\tilde a_1\)};
\draw[arrows={|-latex'}] (1.2,0.2) -- (1.2,1.5);
\draw[arrows={|-latex'}] (1.2,-0.5) -- (1.2,-1.6);
\fill[fill=black,fill opacity=0.05] (-2.73,-0.03) -- (2.07,-0.22) -- (2.77,-0.03) -- (-1.37,0.14) -- cycle;

\draw [shift={(2,0)}]  plot[domain=2.3:3.98,variable=\t]({1*3*cos(\t r)+0*3*sin(\t r)},{0*3*cos(\t r)+1*3*sin(\t r)});
\draw [shift={(-2,0)}]  plot[domain=-0.84:0.84,variable=\t]({1*3*cos(\t r)+0*3*sin(\t r)},{0*3*cos(\t r)+1*3*sin(\t r)});
\draw [shift={(0,0)}]  plot[domain=0:3.14,variable=\t]({0*2.23*cos(\t r)+-1*0.2*sin(\t r)},{1*2.23*cos(\t r)+0*0.2*sin(\t r)});
\draw [shift={(0,0)},dotted]  plot[domain=-3.14:0,variable=\t]({0*2.23*cos(\t r)+-1*0.2*sin(\t r)},{1*2.23*cos(\t r)+0*0.2*sin(\t r)});
\draw [shift={(-2.02,0.07)},dotted]  plot[domain=0:0.75,variable=\t]({1*3.02*cos(\t r)+0.04*0.11*sin(\t r)},{-0.04*3.02*cos(\t r)+1*0.11*sin(\t r)});
\draw [shift={(-2.02,0.07)}] plot[domain=5.36:6.28,variable=\t]({1*3.02*cos(\t r)+0.04*0.11*sin(\t r)},{-0.04*3.02*cos(\t r)+1*0.11*sin(\t r)});
\draw [shift={(1.93,-0.13)},dotted]  plot[domain=2.2:3.13,variable=\t]({1*2.94*cos(\t r)+0.06*0.11*sin(\t r)},{-0.06*2.94*cos(\t r)+1*0.11*sin(\t r)});
\draw [shift={(1.93,-0.13)}] plot[domain=3.13:3.9,variable=\t]({1*2.94*cos(\t r)+0.06*0.11*sin(\t r)},{-0.06*2.94*cos(\t r)+1*0.11*sin(\t r)});
\draw (-2.73,-0.03)-- (2.07,-0.22);
\draw (2.07,-0.22)-- (2.77,-0.03);
\draw (2.77,-0.03)-- (-1.37,0.14);
\draw (-1.37,0.14)-- (-2.73,-0.03);

\end{tikzpicture}

\end{tabular}
\caption{Hyperplanes  \(H_0\) and \(P\).}\label{reflections}
\end{figure}
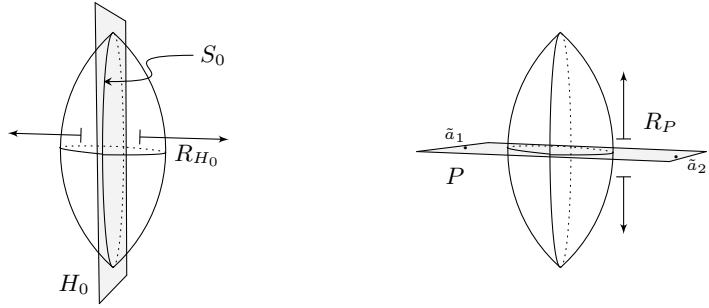

The proof of the diffeomorphism conclusion of  Part \ref{p2} of Theorem \ref{mainthrm} is possible by exploiting the geometry of these limit spaces along the same lines that were achieved in \cite{PSW}. However, we find that it is more convenient to  defer to the following

\begin{theorem}\label{diffthrm} Let \(X\) be an \(n\)-dimensional Alexandrov space with lower curvature bound \(k\) and upper diameter bound \(D\).  Let \(\{N_i\}_{i\in I}\) be a collection of smoothly and isometrically embedded \((n-2)\)-dimensional Riemannian manifolds with smooth boundary such that if \(N_i\cap N_j\neq \emptyset\), then \(N_i\cap N_j = \partial N_i = \partial N_j\). Let \(\{M_\alpha\}\) be a sequence of \(n\)-dimensional Riemannian manifolds with \(\sec M_\alpha\geq k\) and \(\diam M_\alpha\leq D\) that converge to \(X\).  If the space of directions of every point of \(X\setminus (\cup_{i\in I} N_i)\) is isometric to \(S^{n-1}_1\), then for all but finitely many \(\alpha\) and \(\beta\), \(M_\alpha\) is diffeomorphic to \(M_\beta\). \end{theorem}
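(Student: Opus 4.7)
The plan is to combine Perelman's Stability Theorem with a gluing argument near the singular set \(K := \bigcup_{i \in I} N_i\). By Perelman's Stability Theorem, for all sufficiently large \(\alpha\) there exist homeomorphisms \(F_\alpha : M_\alpha \to X\) that are \(\varepsilon_\alpha\)-Gromov--Hausdorff approximations with \(\varepsilon_\alpha \to 0\). In particular, any two \(M_\alpha\) and \(M_\beta\) are homeomorphic for large \(\alpha, \beta\); the remaining task is to upgrade these homeomorphisms to diffeomorphisms.

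To do so, I would decompose \(X\) into the regular part \(X^{\mathrm{reg}} := X \setminus K\) and a small tubular neighborhood \(V\) of \(K\). On \(X^{\mathrm{reg}}\), the hypothesis guarantees that every space of directions is the standard round \(S^{n-1}_1\), so by regularity results for Alexandrov spaces with round tangent spheres, \(X^{\mathrm{reg}}\) carries a smooth Riemannian structure. A differentiable refinement of Perelman's stability, along the lines of the arguments in \cite{PSW}, then allows one to isotope the restriction of \(F_\alpha\) to \(F_\alpha^{-1}(X^{\mathrm{reg}})\) to a smooth diffeomorphism onto \(X^{\mathrm{reg}}\); composing with the analogous isotopy for \(F_\beta\) gives a diffeomorphism \(M_\alpha \setminus F_\alpha^{-1}(K) \to M_\beta \setminus F_\beta^{-1}(K)\).

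The more delicate piece is the behavior near \(K\). Since each \(N_i\) is smoothly and isometrically embedded with smooth boundary, at an interior point \(p\) of \(N_i\) the space of directions of \(X\) splits as a spherical join of \(S^{n-3}_1\) (the directions inside \(N_i\)) with a circle of total angle at most \(2\pi\) (the normal directions). The resulting rigidity from the lower curvature bound gives a neighborhood of \(p\) in \(X\) the local product structure \(N_i^{\mathrm{loc}} \times C_\theta\), where \(C_\theta\) is a 2-dimensional metric cone, and the compatibility condition \(N_i \cap N_j = \partial N_i = \partial N_j\) glues these local products into a topological disk bundle structure on \(V\) over the stratified complex \(K\). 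Pulling back through \(F_\alpha\) and using smoothness of each \(N_i\), one obtains a smooth disk bundle neighborhood of \(F_\alpha^{-1}(K)\) in \(M_\alpha\), which may be matched to the corresponding neighborhood in \(M_\beta\) using the smooth structure transported from \(K\). The main obstacle is the final gluing step: the smooth structures coming from the regular part and from the disk bundles over \(K\) must be matched along \(\partial V\). I expect this to be handled by a small controlled isotopy, exploiting that the two smoothings agree up to \(C^0\)-small error and that the 2-dimensional cone structure transverse to \(K\) is standard once \(N_i\) has been made smooth.
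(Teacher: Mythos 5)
The paper does not prove this theorem; the text immediately following the statement says the proof ``is established in \cite{PW} which is forthcoming.'' So there is no paper proof against which to compare your proposal, and I can only assess it on its own terms.

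Your outline has a genuine gap at the very first step, and it is a load-bearing one: having the space of directions isometric to \(S^{n-1}_1\) at every point of \(X^{\mathrm{reg}}\) does \emph{not} give a smooth Riemannian structure on \(X^{\mathrm{reg}}\). This is a well-known subtlety in Alexandrov geometry. Round tangent cones give only a low-regularity differentiable structure (DC or \(C^{1,\alpha}\)-type, in the sense of Otsu--Shioya or Perelman's DC-structure), not a smooth metric; a boundary of a convex body in \(\mathbb{R}^{n+1}\) that is \(C^1\) but not \(C^2\) already has round spaces of directions everywhere while failing to be smooth. Consequently the plan of first smoothing \(X^{\mathrm{reg}}\) and then invoking a ``differentiable refinement of stability'' to pull the smooth structure to \(F_\alpha^{-1}(X^{\mathrm{reg}})\) does not get off the ground. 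A workable strategy should compare \(M_\alpha\) to \(M_\beta\) directly, using \(X\) only as scaffolding (covering data, fibrations, Sharafutdinov-type retractions), rather than attempting to produce a smooth model on \(X\) itself.

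Two further points are asserted without justification and are not consequences of the stated hypotheses. First, the local metric product \(N_i^{\mathrm{loc}}\times C_\theta\) near an interior point of \(N_i\): the tangent cone splitting tells you only about the infinitesimal structure, not that a neighborhood is a metric product, and indeed in the actual limit spaces \(P^n_k(h,r)\) and \(L^n_k(h,r,\phi_m)\) the geometry transverse to the singular stratum is warped, not a product. Second, the final gluing across \(\partial V\) is exactly where the difficulty lives in arguments of this kind (this is the entire content of the technical work in \cite{PSW}), and ``a small controlled isotopy'' is not an argument --- one must show that the two smoothings can be matched without obstruction, which in general requires a careful handle/fibration analysis near the singular set and is precisely why this theorem is the subject of a separate paper.
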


By ``space of directions" at \(p\in X\), we mean the metric completion \(\Sigma_p\) of the set of geodesic directions at \(p\) with respect to the angle metric (see \cite{BGP}). Here,  we say an embedding $%
\left( S,g\right) \hookrightarrow X$ is smooth and isometric provided for
every $\varepsilon >0$ there is a neighborhood $N$ of $\Delta \left(
S\right) \subset S\times S$ so that 
\begin{equation}
\left\vert D_{V}\mathrm{dist}^{X}\left( \cdot ,\cdot \right) |_{N\setminus
\Delta \left( S\right) }-D_{V}\mathrm{dist}^{S}\left( \cdot ,\cdot \right)
|_{N\setminus \Delta \left( S\right) }\right\vert <\varepsilon
\label{smooth and isom dfn}
\end{equation}%
for all unit $V\in T\left( N\setminus \Delta \left( S\right) \right)$ . 
The proof of Theorem \ref{diffthrm}, is established in \cite{PW} which is forthcoming. 

\subsection*{Singular structure of the limits}To see that  Theorem \ref{diffthrm} applies to the conclusion of Theorem \ref{convergethrm}, we  describe the singular structure of the limit spaces. 

Let \(\pi_{m}:L^n_k(h,r)\rightarrow L^n_k(h,r,\phi_{m})\) and \(p:L^n_k(h,r)\rightarrow P^n_k(h,r)\) be the quotient maps.

When \(m=1\) and \(\phi_{m} = \mathrm{id}\), \(\pi_1(S_0)\subset L^n_k(h,r,\mathrm{id})\) is a smoothly and isometrically  embedded  round \((n-2)\)-sphere and every point in \(L^n_k(h,r,\mathrm{id})\setminus\pi_1(S_0)\) has a euclidean space of directions.

For \(m>1\),  \(\pi_{m}(S_0)\subset L^n_k(h,r,\phi_{m})\) is the quotient of \(S_0\) by the free and orthogonal action of \(\langle\phi_{m}\rangle = \mathbb{Z}_m\), so is a smoothly and isometrically embedded constant curvature Lens space \(S^{n-2}/\mathbb{Z}_m\).  Again, every point of \(L^n_k(h,r,\mathrm{id})\setminus\pi_{m}(S_0)\) has a euclidean space of directions.

\begin{figure}\centering
\vspace{-10pt}
\begin{tikzpicture}[line cap=round,line join=round,>=triangle 45,x=1.0cm,y=1.0cm]
\clip(3,-3.3) rectangle (7.5,-0.5);
\draw [shift={(4.19,0.94)}] plot[domain=-0.87:0.69,variable=\t]({-0.02*3.79*cos(\t r)+1*1.22*sin(\t r)},{-1*3.79*cos(\t r)+-0.02*1.22*sin(\t r)});
\draw [shift={(3.93,-4.37)}] plot[domain=2.27:3.83,variable=\t]({-0.02*3.73*cos(\t r)+1*1.22*sin(\t r)},{-1*3.73*cos(\t r)+-0.02*1.22*sin(\t r)});
\draw [shift={(3.98,-4.42)}] plot[domain=0.76:1.56,variable=\t]({1*3.78*cos(\t r)+0*3.78*sin(\t r)},{0*3.78*cos(\t r)+1*3.78*sin(\t r)});
\draw [shift={(4.16,0.94)}] plot[domain=4.68:5.46,variable=\t]({1*3.78*cos(\t r)+0*3.78*sin(\t r)},{0*3.78*cos(\t r)+1*3.78*sin(\t r)});
\draw [shift={(4.07,-1.74)},dotted]  plot[domain=0:1.9,variable=\t]({1*2.65*cos(\t r)+0.03*0.24*sin(\t r)},{-0.03*2.65*cos(\t r)+1*0.24*sin(\t r)});
\draw [shift={(4.07,-1.74)}] plot[domain=5.04:6.28,variable=\t]({1*2.65*cos(\t r)+0.03*0.24*sin(\t r)},{-0.03*2.65*cos(\t r)+1*0.24*sin(\t r)});
\draw (3.9,-1.08)-- (3.64,-1.48);
\draw (4.21,-1.26)-- (3.73,-2);
\draw (4.32,-1.92)-- (4.1,-2.23);

\draw[stealth-] (4.7,-1.3) .. controls +(2,0) and +(-2,0) .. +(1.7,.5);
\draw (6.9,-.8)node{\small\(p(D^P_1)\)};
\draw (6.9,-2.9)node{\small\(p(D^P_2)\)};
\draw (7.1,-1.5)node{\small\(p(S_0)\)};
\draw[stealth-] (4.7,-2.4) .. controls +(2,0) and +(-2,0) .. +(1.7,-.5);
\draw[stealth-] (5.6,-1.98) .. controls +(0,.3) and +(-1,0) .. +(1,.45);
\end{tikzpicture}
\caption{One half of \(P^n_k(h,r)\) viewed as a doubling.}\label{figureP}
\end{figure}
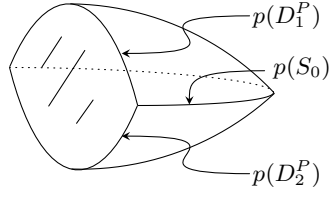

Suppose \(h< r\) and let \(P\subset S^n_k\) be the hyperplane such that \(P^n_k(h,r) = L^n_k(h,r)/(\tilde u\sim R_P(\tilde u))\), where \(\tilde u\in \partial L^n_k(h,r)\). As \(P\) contains a vector orthogonal to \(H_0\), \[p(S_0) = S_0/(\tilde u\sim R_P(\tilde u)) \subset P^n_k(h,r)\]   is an isometrically embedded \((n-2)  \)-disk of constant curvature. In addition, the \((n-2)\)-sphere \(S_P := P\cap \partial L^n_k(h,r)\)  decomposes into two \((n-2)\)-disks 
\[D^{P}_i:=P\cap \partial L^n_k(h,r)\cap \partial D^n_k(\tilde a_i,r),~~~~i=1,2,\] each with constant curvature induced from the metric on \(L^n_k(h,r)\). Because \(p|_{S_P} = \mathrm{id}\), for \(i=1,2,\) we have \(p(D^P_i)\subset P^n_k(h,r)\) is a smooth and  isometric embedding. Moreover, \[S^{n-3}\cong\partial p(S_0)=\partial p(D^P_1)=\partial p(D^P_2)\subset P^n_k(h,r).\]
Here, every point in \(P^n_k(h,r)\setminus (p(S_0)\cup p(D^P_1)\cup p(D^P_2))\) has a euclidean space of directions (See Figure \ref{figureP}).

\begin{acknowledgement}
 The author would like to sincerely thank Vitali Kapovitch and Alex Nabutovsky  for their  support and for insightful conversations about  this work. In addition, he would like to thank Fred Wilhelm for very helpful suggestions on improving the exposition of this manuscript.    \end{acknowledgement}

\section{Eccentricity and The Segment Domain}\label{section1}

If \(X\) is a metric space, points in a   disk \(D(a,r)\subset X\) are expressed as points whose distance from \(a\in X\) does not exceed \(r\). The  key ingredient in the proof of  Proposition \ref{introprop}  is that  all points in a metric disk \(D^n_k(r)\subset S^n_k\) can  also be described geometrically without reference to the distance function from the center  of this disk. More precisely, given any two distinct  points \(\tilde p,\tilde q\in S^n_k\)  all disks \(D^n_k(r)\subset S^n_k\)   with \(\tilde p\in D^n_k(r)\), \(\tilde q\in \partial D^n_k(r)\), and \(\partial D^n_k(r)\) orthogonal to the geodesic through \(\tilde p\) and \(\tilde q\) can be expressed in terms the distances from \(\tilde p \) and \(\tilde q\).    

To see this, we'll start with the motivating \(k=0 \) case. In  \(\R^2  (=S^2_0),\) Thales' theorem from classical geometry says for  any inscribed triangle \(\Delta\tilde q\tilde x\tilde q'\) in a circle \(C\) of radius \(r\),  if \(\tilde q,\tilde q'\) realize the diameter of \(C\), then \(\Delta\) will have a right angle at \(\tilde x\).  In particular if \(\tilde x\neq \tilde q\) and  \(\tilde\beta\) is the angle of \(\Delta\) at \(\tilde q\), this says for every \(\tilde x\in C\setminus{ \{\tilde q\}}\), \[\frac{\cos(\tilde\beta)}{|\tilde  q\tilde x|} = \frac{1}{2r}.\]Now suppose \(\tilde p\) is any point on the diameter between \(\tilde q\) and \(\tilde q'\). The  Law of Cosines at \(\tilde q\) says for all points \(\tilde x\in C\setminus{ \{\tilde q\}}\)\begin{eqnarray}\label{lawctha}\frac{|\tilde p \tilde x|^2 - |\tilde p\tilde q|^2}{|\tilde q\tilde x|^2} = 1 -\frac{2|\tilde p\tilde q|\cos(\tilde\beta)}{|\tilde q\tilde x|} \end{eqnarray} and so for all points \(\tilde x\in C\setminus{ \{\tilde q\}}\), we have \begin{eqnarray}\frac{|\tilde p \tilde x|^2 - |\tilde p\tilde q|^2}{|\tilde q\tilde x|^2} \equiv \frac{r-|\tilde p\tilde q|}{r}.\label{flatthale}\end{eqnarray} Since the right side of (\ref{flatthale}) is increasing in \(r\), this says given \(\tilde p\neq\tilde q\in \R^2\), if \(\lambda\) is a number less than \(1\), there is a point \(\tilde a\in \R^2\) such that all points in the circle \(C\) with radius \(|\tilde p\tilde q|/(1-\lambda)\) can be described by the set\[\left\{\tilde x\in \R^2\left|\frac{|\tilde p \tilde x|^2 - |\tilde p\tilde q|^2}{|\tilde q\tilde x|^2 } =\lambda \right.\right\}\cup\{\tilde q\}.\]When \(\lambda=1, \) this set corresponds to the line through \(\tilde q\) that is orthogonal to the line between \(\tilde p\) and \(\tilde q\).  Monotonicity of the right hand side of (\ref{flatthale}) also shows all points inside and outside of \(C\) can be described by this ratio. Moreover,  \begin{eqnarray} D\left(\tilde a,\frac{|\tilde p\tilde q|}{1-\lambda}\right)  \setminus\{\tilde q\}= \left\{\tilde x\in \R^2\,\left|\, \frac{|\tilde p \tilde x|^2 - |\tilde p\tilde q|^2}{|\tilde q\tilde x|^2}\leq \lambda\right.\right\}\label{disk}\end{eqnarray} where \(\tilde q\in \partial D\left(\tilde a,\frac{|\tilde p\tilde q|}{1-\lambda}\right)   \) and \(\tilde a\) is on the line between \(\tilde p\) and \(\tilde q\). We note if \(\lambda\geq -1\), then \(\tilde p\in D\left(\tilde a,\frac{|\tilde p\tilde q|}{1-\lambda}\right)\).  

Equation (\ref{flatthale}) is the  \(k=0 \) case of a phenomenon that holds  in \(S^n_k\) for arbitrary \(k\). Before getting to this, we first need to describe some properties of the distance modifying function \(\m_k:\R\rightarrow \R\). This function is defined as the solution to  \(y''+ky = 1\) with \(y(0)=y'(0) = 0\) and can be explicitly written as \begin{eqnarray}\label{mk}\qquad \m_k(t) = \frac{t^2}{2} +\sum_{n=2}^\infty(-k)^{n-1}\frac{t^{2n}}{(2n)!}=\left\{\begin{array}{cc}
\frac{1}{k}(1-\cos(\sqrt k\, t)) & \text{~if~} k>0 \\
\frac{t^2}{2} & \text{~if~} k=0 \\
\frac{1}{|k|}(\cosh(\sqrt {\lvert k\rvert} \,t)-1) & \text{~if~} k<0 \\
\end{array}\right. . \end{eqnarray}

The function \(\m_k\)  allows us to express a unifying formula for the Law of Cosines in \(S^n_k\): given three points \(\tilde a, \tilde x, \tilde y\in S^n_k\),  \begin{eqnarray}\m_k(c) = \m_k(a)+\m_k(b) -k\m_k(a)\m_k(b) - \m'_k(a)\m_k'(b)\cos\alpha\label{lawofcosines}
\end{eqnarray} 
where \(c = |\tilde x\tilde y|, a = |\tilde a\tilde x|, b = |\tilde a\tilde y|,\) and \(\alpha = \angle\tilde x\tilde a\tilde y \).

From this we can also use \(\m_k\) to describe the usual formulas from Trigonometry: Applying (\ref{lawofcosines}) to any three points on a single geodesic in \(S^n_k\), we have the sum formulas \begin{eqnarray}
\m_k(a+b) = \m_k(a) + \m_k(b) -k\m_k(a)\m_k(b) - \m'_k(a)\m'_k(b)\label{sumformula}
\end{eqnarray} 
and if \(c = 0\),  by (\ref{lawofcosines}) we have the Pythagorean Identities
\begin{eqnarray}
(\m_k'(t))^2 &=& \m_k(t) + \m_k(t) - k\m_k(t)\m_k(t)\label{pythagoreanidentity}\\
 &=& \m_k(t)(1+\m_k''(t))\nonumber. 
\end{eqnarray}

Next we use \(\m_k\) to express the analog of (\ref{lawctha}), and since we will refer to it often we state it as  
\begin{proposition}\label{betaprop} Let \(\tilde p,\tilde q,\tilde x\in S^n_k \)  be distinct and let \(\tilde\beta = \angle\tilde p\tilde q\tilde x\). Then \begin{eqnarray}
\frac{\m_k(| \tilde p \tilde x|) - \m_k(|\tilde p\tilde q|)}{\m_k(| \tilde q \tilde x|)} &=& \m_k''(|\tilde p\tilde q|) - \m_k'(|\tilde p\tilde q|)\frac{\m_k'(| \tilde q \tilde x|)}{\m_k(| \tilde q \tilde x|)}\cos\tilde\beta.
\end{eqnarray}   
\end{proposition}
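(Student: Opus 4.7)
The plan is to derive the identity as a direct consequence of the Law of Cosines (\ref{lawofcosines}) applied at the vertex $\tilde q$ in the triangle $\tilde p\tilde q\tilde x$, combined with the defining ODE for $\m_k$. Since the three points are distinct and $\m_k$ vanishes only at $0$, the quantity $\m_k(|\tilde q\tilde x|)$ is positive, so dividing by it is legitimate.

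First, I would instantiate (\ref{lawofcosines}) with $\tilde a = \tilde q$, setting $a = |\tilde q\tilde p|$, $b = |\tilde q\tilde x|$, $c = |\tilde p\tilde x|$, and $\alpha = \tilde\beta$. This yields
\[
\m_k(|\tilde p\tilde x|) = \m_k(|\tilde p\tilde q|) + \m_k(|\tilde q\tilde x|) - k\,\m_k(|\tilde p\tilde q|)\m_k(|\tilde q\tilde x|) - \m_k'(|\tilde p\tilde q|)\m_k'(|\tilde q\tilde x|)\cos\tilde\beta.
\]
Rearranging and factoring $\m_k(|\tilde q\tilde x|)$ out of the first two terms on the right gives
\[
\m_k(|\tilde p\tilde x|) - \m_k(|\tilde p\tilde q|) = \m_k(|\tilde q\tilde x|)\bigl(1 - k\,\m_k(|\tilde p\tilde q|)\bigr) - \m_k'(|\tilde p\tilde q|)\m_k'(|\tilde q\tilde x|)\cos\tilde\beta.
\]

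Next I would invoke the defining ODE $\m_k'' + k\m_k = 1$, which gives the pointwise identity $1 - k\,\m_k(|\tilde p\tilde q|) = \m_k''(|\tilde p\tilde q|)$. Substituting this back and then dividing through by $\m_k(|\tilde q\tilde x|) > 0$ produces exactly
\[
\frac{\m_k(|\tilde p\tilde x|) - \m_k(|\tilde p\tilde q|)}{\m_k(|\tilde q\tilde x|)} = \m_k''(|\tilde p\tilde q|) - \m_k'(|\tilde p\tilde q|)\,\frac{\m_k'(|\tilde q\tilde x|)}{\m_k(|\tilde q\tilde x|)}\cos\tilde\beta,
\]
as desired.

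There is essentially no obstacle here beyond bookkeeping: the only substantive observation is that the factor $1 - k\m_k$ appearing naturally in the Law of Cosines is precisely $\m_k''$, which is what converts the curvature-dependent coefficient into the clean form on the right-hand side. This is also what makes the statement the proper curvature-$k$ analogue of the flat identity (\ref{lawctha}), since in the Euclidean case $\m_0(t)=t^2/2$, $\m_0'(t)=t$, $\m_0''(t)\equiv 1$, and the formula reduces exactly to (\ref{lawctha}) after multiplying through by $2$.
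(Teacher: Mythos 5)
Your proof is correct and follows the same route as the paper: apply the Law of Cosines (\ref{lawofcosines}) at the vertex $\tilde q$, use the ODE $\m_k'' + k\m_k = 1$ to rewrite $1 - k\m_k(|\tilde p\tilde q|)$ as $\m_k''(|\tilde p\tilde q|)$, and divide by $\m_k(|\tilde q\tilde x|)$. The remark that this reduces to (\ref{lawctha}) when $k=0$ is a nice sanity check, though not part of the paper's argument.
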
 
\begin{proof}
As \(\m_k'' + k\m_k = 1\), by (\ref{lawofcosines}) 
\begin{eqnarray*}
\m_k(| \tilde p \tilde x|) &=&   \m_k(|\tilde p\tilde q|) + \m_k(| \tilde q \tilde x|) (1-k\m_k(| \tilde p \tilde q|)) - \m_k'(|\tilde p\tilde q|)\m_k'(| \tilde q \tilde x|)\cos\tilde\beta\\
&=& \m _k(|\tilde p\tilde q|) + \m_k(| \tilde q \tilde x|)\m_k''(|\tilde p\tilde q|) - \m_k'(|\tilde p\tilde q|)\m_k'(| \tilde q \tilde x|)\cos\tilde\beta, 
\end{eqnarray*} from which the result follows.  
\end{proof}

The generalizations of (\ref{flatthale}) and (\ref{disk}) are now a result of 

\begin{proposition}\label{proposition1} Let \(r\in(0,\frac{1}{2}\diam S^n_k]\) be a real number and let \(D^n_k(r)\subset S^n_k\) be a disk of radius \(r\). Let \(\tilde p\in D^n_k(r)\) and \(\tilde q\in\partial D^n_k(r)\) be points on a geodesic through the center of \(D^n_k(r)\). Consider the constant \[\frac{\m'_k(r- \left|\tilde p\tilde q\right|)}{\m'_k(r)} = \left\{\begin{array}{cc}
\sin(\sqrt k(r-|\tilde p\tilde q|))/\sin(\sqrt k r) & \text{~if~} k>0 \\
(r-|\tilde p\tilde q|)/r & \text{~if~} k=0 \\
\sinh(\sqrt{|k|}(r-|\tilde p\tilde q|))/\sinh(\sqrt{|k|} r) & \text{~if~} k<0 \\
\end{array}\right.\]and the function \(f:S^n_k\setminus\{\tilde q\}\rightarrow \R\) defined by \[f(\tilde x) =\frac{\m_k(\left|\tilde p\tilde x\right|) -\m_k(\left|\tilde p\tilde q\right|)}{\m_k(\left|\tilde q\tilde x\right|)}. \] Then \begin{enumerate}\item for all \(\tilde x\in \partial D^n_k(r)\setminus\{\tilde q\}\) \[f(\tilde x)=\frac{\m'_k(r- \left|\tilde p\tilde q\right|)}{\m'_k(r)},\]and \item if \(B^n_k(r)\) denotes the interior of \(D^n_k(r)\)\[f|_{B^n_k(r)}<\frac{\m'_k(r- \left|\tilde p\tilde q\right|)}{\m'_k(r)}<f|_{S^n_k\setminus D^n_k(r)}\]\end{enumerate}  
 \end{proposition}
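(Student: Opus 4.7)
The plan is to reduce Part (1) to a single application of the Law of Cosines (\ref{lawofcosines}) combined with Proposition \ref{betaprop}, and to prove Part (2) by analyzing how $f$ varies along the minimizing geodesic segments emanating from $\tilde q$: a monotonicity argument then drives both inequalities.

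For Part (1), let $\tilde a$ denote the center of $D^n_k(r)$ and let $\tilde x\in\partial D^n_k(r)\setminus\{\tilde q\}$, so $|\tilde a\tilde q|=|\tilde a\tilde x|=r$. Because $\tilde p$ lies on the geodesic through $\tilde a$ and $\tilde q$ inside the disk, the direction from $\tilde q$ to $\tilde p$ agrees with the direction from $\tilde q$ to $\tilde a$, so $\tilde\beta:=\angle\tilde p\tilde q\tilde x=\angle\tilde a\tilde q\tilde x$. Applying (\ref{lawofcosines}) at the vertex $\tilde q$ of the isoceles triangle $\Delta\tilde a\tilde q\tilde x$, and using $1-k\m_k(r)=\m''_k(r)$, yields
\[
\cos\tilde\beta=\frac{\m''_k(r)\,\m_k(|\tilde q\tilde x|)}{\m'_k(r)\,\m'_k(|\tilde q\tilde x|)}.
\]
Substituting into Proposition \ref{betaprop} cancels every $|\tilde q\tilde x|$-dependent factor and leaves
\[
f(\tilde x)=\frac{\m''_k(|\tilde p\tilde q|)\m'_k(r)-\m'_k(|\tilde p\tilde q|)\m''_k(r)}{\m'_k(r)};
\]
the numerator equals $\m'_k(r-|\tilde p\tilde q|)$ by the subtraction formula $\m''_k(a)\m'_k(b)-\m'_k(a)\m''_k(b)=\m'_k(b-a)$, which follows either by differentiating (\ref{sumformula}) or from the closed forms in (\ref{mk}). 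This proves Part (1).

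For Part (2), fix $\tilde x\ne\tilde q$, let $\sigma$ be the initially minimizing geodesic from $\tilde q$ through $\tilde x$, and set $\tilde\beta:=\angle\tilde p\tilde q\tilde x$; the latter is the initial angle of $\sigma$ with the direction from $\tilde q$ to $\tilde p$ and remains equal to $\angle\tilde p\tilde q\sigma(t)$ for every $t$ up to the cut point of $\tilde q$. Proposition \ref{betaprop} then writes
\[
f(\sigma(t))=\m''_k(|\tilde p\tilde q|)-\m'_k(|\tilde p\tilde q|)\cos\tilde\beta\cdot\frac{\m'_k(t)}{\m_k(t)},
\]
and a short calculation with (\ref{pythagoreanidentity}) gives $\tfrac{d}{dt}\bigl(\m'_k(t)/\m_k(t)\bigr)=-1/\m_k(t)<0$ on $(0,\diam S^n_k)$; hence $f\circ\sigma$ is strictly increasing, strictly decreasing, or constant according as $\cos\tilde\beta$ is positive, negative, or zero. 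When $\cos\tilde\beta>0$ the initial direction $\sigma'(0)$ has a positive component along the inward normal to $\partial D^n_k(r)$ at $\tilde q$ (which points toward $\tilde a$, hence toward $\tilde p$), so $\sigma$ enters $D^n_k(r)$, and geodesic convexity of $D^n_k(r)$ locates a unique boundary crossing $\tilde y=\sigma(t_0)$ with interior points on $\sigma$ at $t\in(0,t_0)$ and exterior points at $t>t_0$; combining the strict monotonicity of $f\circ\sigma$ with Part (1) applied at $\tilde y$ gives $f(\tilde x)\lessgtr\m'_k(r-|\tilde p\tilde q|)/\m'_k(r)$ with the correct sign. When $\cos\tilde\beta=0$, $\sigma$ is tangent to $\partial D^n_k(r)$ at $\tilde q$, $f\circ\sigma\equiv\m''_k(|\tilde p\tilde q|)$, and the subtraction formula forces $\m''_k(|\tilde p\tilde q|)\ge\m'_k(r-|\tilde p\tilde q|)/\m'_k(r)$, with equality only in the degenerate hemisphere case $r=\tfrac{1}{2}\diam S^n_k$ with $k>0$. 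When $\cos\tilde\beta<0$, convexity forces $\tilde x$ strictly outside $D^n_k(r)$; the monotonicity together with $\lim_{t\to 0^+}f(\sigma(t))=+\infty$ and a limit lower bound $\m''_k(|\tilde p\tilde q|)$ at the far end of $\sigma$ gives $f(\tilde x)>\m'_k(r-|\tilde p\tilde q|)/\m'_k(r)$.

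The main obstacle is shepherding the case analysis cleanly across all signs of $k$ and through the degenerate configurations, especially the positive curvature hemisphere case $r=\tfrac{1}{2}\diam S^n_k$ where several a priori strict inequalities collapse to equalities and where geodesic convexity of $D^n_k(r)$ is no longer strict. The sharpness of the monotonicity of $\m'_k/\m_k$ together with the subtraction formula above should be sufficient to navigate these edge cases without loss.
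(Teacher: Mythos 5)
Your proof is correct. Part~(1) takes a genuinely different route from the paper's. The paper expands both $\m_k(|\tilde p\tilde x|)$ and $\m_k(|\tilde q\tilde x|)$ via the Law of Cosines applied at the \emph{center} $\tilde a$ of the disk, with the angle $\alpha=\angle\tilde q\tilde a\tilde x$ as the intermediary, and also separately expresses $\m_k(|\tilde p\tilde q|)$ via the sum formula. You instead apply the Law of Cosines at the \emph{boundary} vertex $\tilde q$ in the isoceles triangle $\Delta\tilde a\tilde q\tilde x$ to solve for $\cos\tilde\beta$, feed that into Proposition~\ref{betaprop}, and observe that the leftover expression collapses by the subtraction identity $\m_k''(a)\m_k'(b)-\m_k'(a)\m_k''(b)=\m_k'(b-a)$. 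This is slightly shorter, requires only one invocation of the Law of Cosines, and makes Part~(1) a natural companion to Part~(2) since both pivot on Proposition~\ref{betaprop}. For Part~(2) your route is essentially the paper's: use Proposition~\ref{betaprop} along a geodesic $\sigma$ out of $\tilde q$, note that $\m_k'/\m_k$ is strictly decreasing, and sort points by the sign of $\cos\tilde\beta$ together with convexity of the disk. Your computation $(\m_k'/\m_k)'=-1/\m_k$ via the Pythagorean identity is a clean justification of the monotonicity that the paper states more loosely. One small imprecision worth noting: in the $\cos\tilde\beta<0$ case you invoke ``a limit lower bound $\m_k''(|\tilde p\tilde q|)$ at the far end of $\sigma$,'' but for $k<0$ the actual limit of $\m_k'(t)/\m_k(t)$ is $\sqrt{|k|}$, not $0$. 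This does not affect the conclusion, since the pointwise inequality $f(\sigma(t))>\m_k''(|\tilde p\tilde q|)\geq\m_k'(r-|\tilde p\tilde q|)/\m_k'(r)$ holds outright for every $t>0$ with no limit needed; it would be cleaner to state it that way.
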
 

\begin{proof} We drop the ``\(k\)" from \(\m_k\). Let \(\tilde\gamma:\R\rightarrow S^2_k\) be a geodesic and \(r\in (0,\frac{1}{2}\diam S^n_k]\)  a real number. Suppose \(\tilde a,\tilde p,\tilde q\) are points on the image of \(\tilde \gamma\) so that \(\tilde p \in D^n_k(\tilde a,r)\) and \(\tilde q \in \partial D^n_k(\tilde a,r)\). Set \(h:=|\tilde p\tilde q|\).

From (\ref{sumformula}),  
\begin{eqnarray*} \m(|\tilde p\tilde q|) =\m(h) =  \m(r-h) + \m(r) - k\m(r-h)\m(r) - \m'(r-h)\m'(r).\end{eqnarray*} 
Now take any  \(\tilde x\in\partial D^n_k(\tilde a,r)\setminus\{\tilde q\}\) and let \(\alpha:=\angle \tilde         q \tilde a \tilde x .\)   Then \(\alpha = \angle \tilde p \tilde a \tilde x\) if \(r-h> 0\) and \(\alpha = \pi-\angle \tilde     p \tilde a \tilde x\) if \(r-h<0\). Since \(\m\) is even and \(\m'\) is odd, (\ref{lawofcosines}) says \begin{eqnarray*} \m(|\tilde p\tilde x|) &=&  \m(|\tilde a \tilde p
|) + \m(r) - k\m(|\tilde a \tilde p|)\m(r) - \m'(|\tilde a \tilde p|)\m'(r)\cos\alpha\\
&=&  \m(r-h) + \m(r) - k\m(r-h)\m(r) - \m'(r-h)\m'(r)\cos\alpha.
\end{eqnarray*}
By (\ref{lawofcosines}) and (\ref{pythagoreanidentity}), \begin{eqnarray*} \m(|\tilde q\tilde x|) &=& \m(r) +\m(r) - k(\m(r))^2 - (\m'(r))^2\cos\alpha\\ &=&(\m'(r))^2(1-\cos\alpha).  \end{eqnarray*}
Combining the last three displays, it follows that,\begin{eqnarray}\frac{\m(|\tilde p\tilde x|) - \m(|\tilde p\tilde q|)}{\m(|\tilde q\tilde x|)}= \frac{\m'(r - |\tilde p\tilde q|)}{\m'(r)},\label{Q}\end{eqnarray}
proving Part (1).

Let \(\tilde\sigma:\R\rightarrow S^n_k\) be any geodesic such that \(\tilde\sigma(0) = \tilde q\) and let \(\tilde\beta \) be the angle between \(\tilde\sigma\) and \(\tilde\gamma\) at \(\tilde q\).  By Proposition \ref{betaprop},  
\begin{eqnarray}
\frac{\m(| \tilde p\tilde\sigma (t)|) - \m(|\tilde p\tilde q|)}{\m(| \tilde q \tilde\sigma (t)|)} &=& \m''(|\tilde p\tilde q|) - \m'(|\tilde p\tilde q|)\frac{\m'(| \tilde q \tilde\sigma (t)|)}{\m(| \tilde q \tilde\sigma (t)|)}\cos\tilde\beta\label{beta}.
\end{eqnarray}
  Since \(\m\) and \(\m'\) are both nonnegative on  \([0,\diam S^n_k]\), and  any point \(\tilde x\in \partial D^n_k(\tilde a,r)\setminus\{\tilde q\}\) can be connected to \(\tilde q\) with a segment that makes angle \(\leq\pi/2\) with \(\tilde \gamma\) at \(\tilde q\), (\ref{Q}) and (\ref{beta}) show \begin{eqnarray}\label{bigang}\frac{\m'(r - |\tilde p\tilde q|)}{\m'(r)}\leq \m''(|\tilde p\tilde q|).\end{eqnarray} In particular, for any \(\tilde \sigma\) with  \(\tilde \beta>\pi/2\),  (\ref{beta}) and (\ref{bigang})  say \begin{eqnarray}\frac{\m'(r-|\tilde p\tilde q|)}{\m'(r)}<\frac{\m(| \tilde p \tilde \sigma(t)|) - \m(|\tilde p\tilde q|)}{\m(| \tilde q \tilde \sigma(t)|)}\label{betabig}\end{eqnarray} for all \(t\in(0,\diam S^n_k\)]. 

On \([0,\diam S^n_k]\), \(\m\) is nondecreasing,  therefore \(\m'/\m\) is nonincreasing. This says, if \(\tilde\beta\leq\pi/2\), the right side of (\ref{beta}) will strictly increase with \(|\tilde q\tilde \sigma(t)|\). Therefore, if we choose \(t_b\in(0,\diam S^n_k\)) so that  \(\tilde\sigma(t_b)\in \partial D^n_k(r)\), we have for all \(t_{\text{int}}\in (0,t_b)\) and \(t_{\text{ext}}\in(t_b,\diam S^n_k]\)
\begin{eqnarray}\qquad\frac{\m(| \tilde p \tilde \sigma(t_{\text{int}})|) - \m(|\tilde p\tilde q|)}{\m(| \tilde q \tilde \sigma(t_{\text{int}})|)}< \frac{\m'(r-|\tilde p\tilde q|)}{\m'(r)}<\frac{\m(| \tilde p \tilde \sigma(t_{\text{ext}})|) - \m(|\tilde p\tilde q|)}{\m(| \tilde q \tilde \sigma(t_{\text{ext}})|)}.\label{betasmall}
\end{eqnarray} Equations (\ref{betabig}) and (\ref{betasmall}) complete the proof of Part (2).\end{proof}
Now notice, for any \(k\in \R\) and \(h\in(0,\diam S^n_k)\), the function \(r\mapsto \m'_k(r- h)/\m'_k(r)\) is  strictly increasing on \((0,\diam S^n_k)\). This says for any number \(\lambda\) less than  \begin{eqnarray*}\Lambda(k,h):= \lim_{r\rightarrow \diam S^n_k}\frac{ \m'_k(r- h)}{\m'_k(r)} =\left\{\begin{array}{cc}
\infty & \text{~if~} k>0 \\

e^{-\sqrt {\lvert k\rvert} h} & \text{~if~} k\leq0 \\
\end{array}\right.,\end{eqnarray*} the equation \begin{eqnarray}\label{rsol}\frac{m'_k(r- h)}{m'_k(r)} = \lambda\end{eqnarray} can be solved uniquely for \(r\).  Therefore, given \(\tilde p,\tilde q\in S^n_k\) with \(|\tilde p\tilde q|= h\in (0,\diam S^n_k)\), if \(\lambda\in[-1,\Lambda(k,h))\) and \(r_\lambda\) denotes the unique solution to (\ref{rsol}) we have 
\[D^n_k\left(\tilde a,r_\lambda\right)  \setminus\{\tilde q\}= \left\{\tilde x\in S^n_k\,\left|\, \frac{\m_k(\left|\tilde p\tilde x\right|) -\m_k(\left|\tilde p\tilde q\right|)}{\m_k(\left|\tilde q\tilde x\right|)}\leq \lambda\right.\right\}\]
where \(\tilde q\in \partial D^n_k\left(\tilde a,r_\lambda\right)   \), \(\tilde p\in D^n_k(\tilde a,r_\lambda)\) and \(\tilde a\) is on the geodesic determined by \(\tilde p\) and \(\tilde q\). 

Now, if  \((X,\dist)\) is an Alexandrov space with \(\mathrm{curv\,}X\geq k\), given \(p,q\in X\), we can use Proposition \ref{proposition1} to construct subsets of \(X\), i.e., \[ \left\{x\in X\,\left|\, \frac{\m_k(\dist(p,x)) -\m_k(\dist(p,q))}{\m_k(\dist(q,x))}\leq \lambda\right.\right\}\] that would otherwise correspond  to metric disks if \(X=S^n_k\). As in the definition of radius of \(X\), we are interested in  the smallest such subset that covers \(X\). This motivates the following 

\begin{definition}\label{d2} If \(k\in \R\) is fixed and \(X\) is an \(n\)-dimensional Alexandrov space with \(\mathrm{curv}\,X\geq k\), for \(p,q\in X\), we'll say the \(k\)-\emph{eccentricity} at \(p\) relative to \(q \) is given by \(\)\[\lambda_p(q):=\sup_{x\in M\setminus\{q\}}\frac{\m_k(\dist(p,x)) - \m_k(\dist(p,q))}{\m_k(\dist(q, x))}.\] \end{definition}
\begin{definition}\label{d3}If \(\lambda_p(q)<\Lambda(k,\dist(p,q))\),   denote by \(r_{\lambda_p(q)}\) the unique solution \(r\) to the equation \[\frac{\m'_k(r-\dist(p,q))}{\m'_k(r)} = \lambda_p(q)\, ,\] otherwise we set \(r_{\lambda_p(q)} = \infty. \) \end{definition}

Next we observe the following relationship between eccentricity and critical points
\begin{proposition}\label{criticalpointprop} Let \(M\) be a Riemannian manifold with \(\sec M\geq k\). If \(p,q\in M\), then \(\lambda_p(q)<\infty\) if and only if \(q\) is critical for the distance from \(p\).\end{proposition}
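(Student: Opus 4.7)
The plan is to derive both implications from Toponogov's hinge comparison combined with the algebraic identity recorded in Proposition \ref{betaprop}, with a short first-variation argument handling the reverse direction. The key observation is that the quantity whose supremum defines \(\lambda_p(q)\) is, up to hinge comparison, exactly the right-hand side of Proposition \ref{betaprop}, and the angle \(\beta\) appearing there is made to have nonnegative cosine by the \(\pi/2\)-net condition defining criticality.

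For the forward direction, fix \(x\in M\setminus\{q\}\) and a segment from \(q\) to \(x\) with initial direction \(v_x\). If \(q\) is critical for \(p\), the \(\pi/2\)-net condition produces \(u\in\Uparrow_q^p\) with \(\angle(u,v_x)\le\pi/2\), hence a segment from \(q\) to \(p\) whose angle \(\beta\) with the chosen segment from \(q\) to \(x\) satisfies \(\cos\beta\ge0\). Toponogov's hinge inequality gives \(\m_k(\dist(p,x))\le\m_k(|\tilde p\tilde x|)\) in the comparison hinge \(\tilde p\tilde q\tilde x\subset S^n_k\) with sides \(\dist(p,q),\dist(q,x)\) and angle \(\beta\) at \(\tilde q\). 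Applying Proposition \ref{betaprop} to this hinge and discarding the nonpositive term \(-\m_k'(\dist(p,q))(\m_k'/\m_k)(\dist(q,x))\cos\beta\) (legal because \(\m_k,\m_k'\ge0\) on \([0,\diam S^n_k]\)) yields
\[
\frac{\m_k(\dist(p,x))-\m_k(\dist(p,q))}{\m_k(\dist(q,x))}\le \m_k''(\dist(p,q))
\]
uniformly in \(x\), so \(\lambda_p(q)\le \m_k''(\dist(p,q))<\infty\).

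For the reverse direction I would argue the contrapositive. Noncriticality together with compactness of \(\Uparrow_q^p\) produces a unit \(v\in T_qM\) and \(\delta>0\) with \(\angle(u,v)\ge\pi/2+\delta\) for every \(u\in\Uparrow_q^p\). For \(x_t:=\exp_q(tv)\) with \(t\) below the injectivity radius at \(q\), the first-variation formula for \(\dist(p,\cdot)\) gives
\[
\liminf_{t\to 0^+}\frac{\dist(p,x_t)-\dist(p,q)}{t}=-\max_{u\in\Uparrow_q^p}\cos\angle(u,v)\ge \sin\delta>0,
\]
while \(\m_k(\dist(q,x_t))=\tfrac12 t^2+O(t^4)\). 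A first-order Taylor expansion of \(\m_k\) at \(\dist(p,q)\) (which has positive derivative there, since \(p\ne q\) and the standing hypothesis \(M\not\cong S^n_k\) precludes \(\dist(p,q)=\diam S^n_k\) when \(k>0\)) then shows the ratio defining \(\lambda_p(q)\) grows like \(1/t\) along \(x_t\), so \(\lambda_p(q)=\infty\).

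The main obstacle I anticipate is the first-variation step in the reverse direction: at a potential critical point multiple segments from \(q\) to \(p\) can coexist, so the one-sided derivative of the distance function has to be taken as a maximum over \(\Uparrow_q^p\). The uniform angle gap \(\delta\) extracted from compactness of \(\Uparrow_q^p\) is precisely what lets me pass from pointwise noncriticality to a quantitative linear-versus-quadratic comparison, which is in turn what forces the \(\m_k\)-ratio to blow up at the right rate to recover \(\lambda_p(q)=\infty\).
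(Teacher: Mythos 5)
Your proof is correct and follows essentially the same route as the paper: criticality implies \(\lambda_p(q)<\infty\) via the \(\pi/2\)-net, Toponogov's hinge comparison, and Proposition \ref{betaprop}; the converse via first variation together with the quadratic vanishing of \(\m_k\) at the origin. The paper argues that converse directly (finite \(\lambda_p(q)\) combined with the mean value theorem forces \(\cos\alpha_{\min}\geq 0\) for every direction at \(q\)), whereas you take the contrapositive and extract a uniform angular gap to exhibit a \(1/t\)-blowup, but the underlying mechanism is identical.
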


\begin{proof}

Assume \(p\neq q\). Take any segment \(\gamma_q:[0,T]\rightarrow M\) with \(\gamma_q(0)=q.\) For any \(t\in (0,T),\) by the mean value theorem, for some \(d^\ast\in[\dist(p,q),\dist(p,\gamma_q(t))] \) and \(t^\ast\in(0,t)\) we have \begin{eqnarray*}\frac{\m(\dist(p,\gamma_q(t)))-\m(\dist(p,q))}{ \m(\dist(q,\gamma_q(t)))}&=&\frac{\m'(d^\ast)}{ \m'(t^\ast)}\,\frac{\dist(p,\gamma_q(t))-\dist(p,q)}{t}.\end{eqnarray*} Using this and the first variation formula, if \(\lambda_p(q)<\infty\), then  \begin{eqnarray*}-\cos\alpha_\text{min} &=& \lim_{t\rightarrow 0^{+}}\,\frac{\dist(p,\gamma_q(t))-\dist(p,q)}{t}\\
&\leq&\lim_{t\rightarrow0^+}\,\frac{\m'(t^\ast)}{ \m'(d^\ast)}\,\lambda_p(q)\\ &=&\frac{\m'(0)}{\m'(\dist(p,q))}\,\lambda_p(q)\\ &=& 0.\end{eqnarray*} So \(q\) is a critical point for the distance from \(p\). 

Conversely, suppose \(q\) is  critical  to \(p\). Take any \(x\in M\) different from \(q\) and any segment \(\sigma_{qx}\) from \(q\) to \(x\). It follows there is a segment \(\gamma_{qp}\) from \(q\) to \(p\) so that \[\beta := \sphericalangle(\gamma_{qp}'(0),\sigma_{qx}'(0))\leq\pi/2.\] Now let \(\tilde\gamma_{\tilde q\tilde p}\) and \(\tilde \sigma_{\tilde q\tilde x}\) be   segments in \(S^n_k\) where \(|\tilde q\tilde p| = \dist(p,q)\), \(|\tilde q\tilde x | =\dist(q,x)\), and \[\tilde\beta := \sphericalangle(\tilde \gamma_{\tilde q\tilde p}'(0),\tilde \sigma_{\tilde q\tilde x}'(0)) = \beta\leq\pi/2.\]
By the hinge version of Toponogov's  Theorem, \[\dist(p,x)\leq \lvert\tilde p\tilde x\rvert.\]
From Proposition \ref{betaprop}, it follows that\begin{eqnarray*}\frac{m(\dist(p,x)) - m(\dist(p,q))}{m(\dist(q,x))}&\leq& \frac{m(| \tilde p \tilde x|) - m(|\tilde p\tilde q|)}{m(| \tilde q \tilde x|)}\\ &=& m''(|\tilde p\tilde q|) - m'(|\tilde p\tilde q|)\frac{m'(| \tilde q \tilde x|)}{m(| \tilde q \tilde x|)}\cos\tilde\beta\\ 
&\leq&m''(|\tilde p\tilde q|). \end{eqnarray*}So \(\lambda_p(q)<\infty.\)    
\end{proof}

Following \cite{GrovePet3}, for \(p\in M\), the star convex region of \(B^n_k(p) \subset T_pM\) that is bounded by the tangent cut locus of \(p\) and contains the origin will be called the \emph{segment domain} at \(p\).  We denote this set as \(\seg(p)\) and note\begin{eqnarray*}\text{seg}(p)=\{v\in B^n_k(p)\mid\text{exp}_p(tv):[0,1]\rightarrow M\text{ is a segment} \}.\end{eqnarray*}The map  \(\exp_p:\seg(p)\rightarrow M\) is surjective and if \(\seg(p)\) has the metric induced from \(B^n_k(p)\), by Topnogov's theorem, it is also 1-Lipschitz.

Recall that we use \(\Uparrow_p^q\) to denote the set of unit tangent vectors in \(T_pM\) which are tangent to segments from \(p\) to \(q\). 
\begin{definition}\label{clerical}
Let \(\tilde p\) denote the origin of \(\seg(p)\).  For each \(q\in \crit(p)\), and for each \(v\in\Uparrow_p^q\), let  \(\tilde\gamma_{v}\) be the geodesic in \(B^n_k(p)\) such that
\(\tilde\gamma_v (0) = \tilde p \) and \(\tilde \gamma_{v}'(0)=v\). If \(r_{\lambda_p(q)}<\frac{1}{2}\diam S^n_k\), set \(\tilde a_{v}:= \tilde\gamma_{v}(\dist(p,q)-r_{\lambda_p(q)})\) and let
\[D^n_{k}(\tilde a_{v},r_{\lambda_p(q)})\subset B^n_k(p)\] denote the disk centered at \(\tilde a_{v}\) with radius \(r_{\lambda_p(q)}\).
 If \(r_{\lambda_p(q)}=\frac{1}{2}\diam S^n_k \), set  \(\tilde q_v:= \tilde \gamma_v(\dist(p,q))\) and let \begin{eqnarray*}H_{v} &:=&\{\tilde x\in B^n_k(p)\mid \angle \tilde p\tilde q\tilde x\leq \pi/2\}\\ &=&\mathrm{cl}\left(\bigcup_{t\in I}D^n_k\left(\tilde\gamma_{v_q}(\dist(p,q)-t),t\right)\right) \end{eqnarray*} where \(I=[\dist(p,q),\frac{1}{2}\diam S^n_k]\).  \end{definition}

\begin{lemma} \label{segcontain} Given \(q\in \crit(p)\), if \(r_{\lambda_p(q)}<\frac{1}{2}\diam  S^n_k\) and      \(\dist(p,q)<r_{\lambda_p(q)}\), then \[\seg(p)\subset\bigcap_{v\in \Uparrow_p^q}D^n_{k}(\tilde a_v,\rad_p(q)).\]
\end{lemma}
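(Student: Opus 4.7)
The plan is to use Proposition \ref{proposition1} to reduce the containment to checking an $\m_k$-ratio inequality, and then derive that inequality from the definition of $\lambda_p(q)$ together with the fact that $\exp_p$ is $1$-Lipschitz on $\seg(p)$.

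Fix any $\tilde x \in \seg(p)$ and any $v \in \Uparrow_p^q$, and set $x := \exp_p(\tilde x)$, $\tilde q_v := \tilde\gamma_v(\dist(p,q))$. Because $r_{\lambda_p(q)} < \tfrac{1}{2}\diam S^n_k$ and $\dist(p,q) < r_{\lambda_p(q)}$, the point $\tilde a_v = \tilde\gamma_v(\dist(p,q)-r_{\lambda_p(q)})$ lies on the geodesic from $\tilde p$ through $\tilde q_v$, with $\tilde p$ in the interior and $\tilde q_v$ on the boundary of $D^n_k(\tilde a_v, r_{\lambda_p(q)})$. By Proposition \ref{proposition1} (and the discussion following it, giving the characterization of $D^n_k(\tilde a,r_\lambda)\setminus\{\tilde q\}$ as a sublevel set), to prove $\tilde x \in D^n_k(\tilde a_v,r_{\lambda_p(q)})$ it suffices to show
\begin{equation}\label{sufficeslem}
\frac{\m_k(\lvert \tilde p\tilde x\rvert) - \m_k(\lvert\tilde p\tilde q_v\rvert)}{\m_k(\lvert\tilde q_v\tilde x\rvert)} \;\leq\; \lambda_p(q).
\end{equation}

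Since $\tilde x\in\seg(p)$ and $v\in\Uparrow_p^q$, the radial segments in $\seg(p)$ ending at $\tilde x$ and $\tilde q_v$ map under $\exp_p$ to segments in $M$, so $\lvert\tilde p\tilde x\rvert = \dist(p,x)$ and $\lvert\tilde p\tilde q_v\rvert = \dist(p,q)$. Toponogov's theorem gives that $\exp_p : \seg(p) \to M$ is $1$-Lipschitz, hence $\dist(q,x) \leq \lvert\tilde q_v\tilde x\rvert$, and since $\m_k$ is nondecreasing on $[0,\diam S^n_k]$,
\[
\m_k(\dist(q,x)) \;\leq\; \m_k(\lvert\tilde q_v\tilde x\rvert).
\]
From the definition of $\lambda_p(q)$ (Definition \ref{d2}), applied to the point $x \in M$,
\[
\m_k(\dist(p,x)) - \m_k(\dist(p,q)) \;\leq\; \lambda_p(q)\,\m_k(\dist(q,x)).
\]

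The final step is to combine these two inequalities, which requires $\lambda_p(q) \geq 0$ in order to replace $\m_k(\dist(q,x))$ by the larger quantity $\m_k(\lvert\tilde q_v\tilde x\rvert)$ on the right. This is where the hypothesis $\dist(p,q)<r_{\lambda_p(q)}$ is used: from Definition \ref{d3}, $\lambda_p(q) = \m'_k(r_{\lambda_p(q)}-\dist(p,q))/\m'_k(r_{\lambda_p(q)})$, and since $0 < r_{\lambda_p(q)}-\dist(p,q) < r_{\lambda_p(q)} < \tfrac{1}{2}\diam S^n_k$, both $\m'_k$ values are strictly positive, so $\lambda_p(q) > 0$. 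Chaining the inequalities yields \eqref{sufficeslem}, which is the desired containment. (The only subtle point, and what I expect to need the most care when writing the formal proof, is this sign check on $\lambda_p(q)$; everything else is a direct combination of the definitions with the $1$-Lipschitz property of $\exp_p$ on $\seg(p)$.)
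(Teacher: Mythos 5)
Your proof is correct and takes essentially the same approach as the paper's: reduce the containment to the sublevel-set characterization from Proposition \ref{proposition1}, use the hinge version of Toponogov's theorem (equivalently, the $1$-Lipschitz property of $\exp_p$ on $\seg(p)$) to get $\dist(q,x)\leq\lvert\tilde q_v\tilde x\rvert$, and invoke the definition of $\lambda_p(q)$ together with its positivity, which follows from $\dist(p,q)<r_{\lambda_p(q)}$. The only cosmetic difference is that the paper splits into cases according to the sign of $\m_k(\lvert\tilde p\tilde x\rvert)-\m_k(\lvert\tilde p\tilde q_v\rvert)$ before comparing the two ratios, whereas you clear the denominator and multiply through by $\lambda_p(q)>0$, which handles both signs uniformly.
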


\begin{proof}

Let \(\tilde p\) be the origin of \(\seg(p)\),  \(v\in \Uparrow_p^q\), and set \(\tilde q_v:=\dist(p,q)v\).   
To show that \(\seg(p)\subset D^n_{k}(\tilde a_v,r_{\lambda_p(q)})\), by Proposition \ref{proposition1}, it will suffice to show for any \(\tilde x\in \seg(p)\), \begin{eqnarray}\label{lambda} \frac{\m(|\tilde p \tilde x|) - \m(|\tilde p\tilde q_v|)}{\m(|\tilde q\tilde x|)}\leq \frac{\m'(r_{\lambda_p(q)}-\dist(p,q))}{\m'(r_{\lambda_p(q)})} = \lambda_p(q) .\end{eqnarray} Since \(r\mapsto \m'(r-|\tilde p\tilde q_v|)/\m'(r)\) is monotone increasing and vanishes when \(r=\dist(p,q),\) it follows from the assumption \(\dist(p,q)<r_{\lambda_p(q)}\) that \(\lambda_p(q)>0\). In particular, (\ref{lambda}) holds for all \(\tilde x\) such that  \(|\tilde p \tilde x|\leq |\tilde p\tilde q_v|\).    

Assume that \(|\tilde p\tilde x|>|\tilde p\tilde q_v|\). Let \(x\in M\) be such that  \(\exp_p(\tilde x) = x\). 
By the hinge version of Toponogov's  Theorem, \[\dist(q,x)\leq |\tilde q_v\tilde x|.\]
Therefore, \begin{eqnarray*} 0&<& \frac{\m(|\tilde p \tilde x|) - \m(|\tilde p\tilde q_v|)}{\m(| \tilde q_v\tilde x|)}\\
&\leq& \frac{\m(\dist(p,x)) - \m(\dist(p,q))}{\m(\dist(q,x))}\\
&\leq &\lambda_p(q) \end{eqnarray*} as desired. 
\end{proof} 

\begin{lemma}\label{halfspacelemma} Given \(q\in \crit(p)\), if \(\dist(p,q)<r_{\lambda_p(q)}=\frac{1}{2}\diam  S^n_k\)  then \[\seg(p)\subset\bigcap_{v\in \Uparrow_p^q}H_{v}\]\end{lemma}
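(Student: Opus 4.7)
The plan is to adapt the argument of Lemma \ref{segcontain}, replacing the disk \(D^n_k(\tilde a_v, r_{\lambda_p(q)})\) by the half-space \(H_v\) and using Proposition \ref{betaprop} to convert the defining condition \(\angle \tilde p \tilde q_v \tilde x \leq \pi/2\) into a ratio inequality about \(\m\). Fix \(v\in\Uparrow_p^q\), set \(\tilde q_v := \dist(p,q)\,v\), and take \(\tilde x\in \seg(p)\) with \(x:=\exp_p(\tilde x)\). Since \(v\in\Uparrow_p^q\) and \(\tilde x\in \seg(p)\), we have \(|\tilde p\tilde q_v|=\dist(p,q)\) and \(|\tilde p\tilde x|=\dist(p,x)\). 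Writing \(\tilde\beta := \angle\tilde p\tilde q_v\tilde x\), Proposition \ref{betaprop} in \(B^n_k(p)\) yields
\[
\frac{\m(|\tilde p\tilde x|) - \m(|\tilde p\tilde q_v|)}{\m(|\tilde q_v\tilde x|)} = \m''(\dist(p,q)) - \m'(\dist(p,q))\,\frac{\m'(|\tilde q_v\tilde x|)}{\m(|\tilde q_v\tilde x|)}\cos\tilde\beta,
\]
so \(\tilde x\in H_v\) is equivalent to
\[
\frac{\m(|\tilde p\tilde x|) - \m(|\tilde p\tilde q_v|)}{\m(|\tilde q_v\tilde x|)} \leq \m''(\dist(p,q)).
\]
The hypothesis \(\dist(p,q) < \tfrac{1}{2}\diam S^n_k\) forces \(\m''(\dist(p,q)) > 0\) in every curvature regime.

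I would then split into cases. If \(|\tilde p\tilde x|\leq \dist(p,q)\), the left-hand side above is nonpositive while the right-hand side is strictly positive, and the inequality is immediate. If instead \(|\tilde p\tilde x|>\dist(p,q)\), then because \(v\) and \(\tilde x/|\tilde x|\) are tangent to segments in \(M\) from \(p\) to \(q\) and from \(p\) to \(x\) respectively, the angle at \(p\) in \(M\) equals the tangent-space angle at \(\tilde p\); the hinge version of Toponogov's Theorem therefore gives \(\dist(q,x)\leq |\tilde q_v\tilde x|\). Monotonicity of \(\m\) on \([0,\diam S^n_k]\) then shrinks the denominator while preserving the (positive) numerator, producing
\[
\frac{\m(|\tilde p\tilde x|) - \m(|\tilde p\tilde q_v|)}{\m(|\tilde q_v\tilde x|)} \leq \frac{\m(\dist(p,x))-\m(\dist(p,q))}{\m(\dist(q,x))} \leq \lambda_p(q).
\]

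To close the chain I would invoke \(\lambda_p(q)\leq \m''(\dist(p,q))\), which is essentially the content of the second half of the proof of Proposition \ref{criticalpointprop}: for each \(x\), choose a segment from \(q\) to \(p\) making angle \(\leq\pi/2\) with a given segment from \(q\) to \(x\), then apply Toponogov's hinge comparison together with Proposition \ref{betaprop} to bound the eccentricity by \(\m''(\dist(p,q))\). Combining this with the previous display completes both cases and hence the lemma. The only step requiring care is the sign of the numerator \(\m(|\tilde p\tilde x|)-\m(|\tilde p\tilde q_v|)\): the Toponogov comparison only flows in the useful direction when this numerator is positive, which is precisely why the case split above is needed; once it is in place, the argument is a direct analogue of Lemma \ref{segcontain}.
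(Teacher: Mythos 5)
Your proof is correct. Both your argument and the paper's rely on the same three ingredients — Proposition \ref{betaprop}, the hinge version of Toponogov's theorem, and the criticality of \(q\) for \(\dist(\cdot,p)\) — but the organization differs. The paper first establishes a self-contained angle-comparison observation in \(S^n_k\): if \(\tilde\Delta\tilde q\tilde p\tilde x\) and \(\tilde\Delta\tilde q\tilde p x^*\) share the side \(|\tilde p\tilde x|=|\tilde p x^*|\), if \(|\tilde q\tilde x|\leq|\tilde q x^*|\), and if \(\tilde\angle\tilde p\tilde q\tilde x\leq\pi/2\), then \(\tilde\angle\tilde p\tilde q x^*\leq\pi/2\). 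It then builds an explicit comparison triangle \(\tilde\Delta\tilde q_v\tilde p\tilde x\) for \(\Delta qpx\) whose angle at \(\tilde q_v\) is \(\leq\pi/2\) (criticality plus Toponogov), and feeds this, together with the hinge comparison \(|\tilde q_v\tilde x|\leq|\tilde q_v x^*|\), into that observation. You instead translate \(x^*\in H_v\) directly into the ratio inequality \(\frac{\m(|\tilde p x^*|)-\m(\dist(p,q))}{\m(|\tilde q_v x^*|)}\leq\m''(\dist(p,q))\) via Proposition \ref{betaprop}, and close the chain by passing through the eccentricity \(\lambda_p(q)\) together with the bound \(\lambda_p(q)\leq\m''(\dist(p,q))\) extracted from the proof of Proposition \ref{criticalpointprop}. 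Your route is the more literal adaptation of Lemma \ref{segcontain}, which is pedagogically tidy and makes the two lemmas look structurally parallel; the paper's route never names \(\lambda_p(q)\) or its bound and instead isolates the underlying spherical geometry as a reusable statement. One small point worth flagging explicitly in your write-up: the equivalence in your first step, and the sign conclusion in Case 1, both require \(\m'(\dist(p,q))>0\) and \(\m''(\dist(p,q))>0\), which hold precisely because \(\dist(p,q)<\tfrac{1}{2}\diam S^n_k\); and the degenerate cases \(\tilde x=\tilde p\) or \(\tilde x=\tilde q_v\) should be dispatched separately since Proposition \ref{betaprop} assumes the three points are distinct.
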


\begin{proof} Take any \(\tilde p,\tilde q\in S^n_k\) with \(|\tilde p\tilde q|< \frac{1}{2}\diam S^n_k\). Suppose that  \(\tilde \Delta\tilde q\tilde p\tilde x\) and \(\tilde\Delta \tilde q\tilde p x^*\) are triangles in \(S^n_k\) with \(|\tilde p\tilde x| = |\tilde p x^*|\) and \(|\tilde q\tilde x|\leq|\tilde qx^*|\). If \(\tilde \angle\tilde p\tilde q\tilde x\leq \pi/2\), then \(\tilde \angle\tilde q\tilde px^*\leq\pi/2\). To see this, since \(|\tilde p\tilde q|< \frac{1}{2}\diam S^n_k\),  either \[\frac{\m(|\tilde px^*|)-\m(|\tilde p\tilde q|)}{\m(|\tilde q x^*|)}\leq 0<\m''(|\tilde p\tilde q|),\]or by Proposition \ref{betaprop},   \begin{eqnarray*}\frac{\m(|\tilde px^*|)-\m(|\tilde p\tilde q|)}{\m(|\tilde q x^*|)}&\leq& \frac{\m(|\tilde p\tilde x|)-\m(|\tilde p\tilde q|)}{\m(|\tilde q \tilde x|)} \\
&= & \m''(|\tilde p\tilde q|)-\m'(|\tilde p\tilde q|)\frac{\m'(|\tilde q\tilde x|)}{\m(|\tilde q\tilde x|)}\cos\tilde  \angle\tilde p\tilde q\tilde x\\ &\leq& \m''(|\tilde p\tilde q|).\end{eqnarray*} In either case, these inequalities combined with Proposition \ref{betaprop} imply \(\tilde \angle\tilde p\tilde q x^*\leq \pi/2.\)  

Now suppose that \(\tilde p\) is the origin of \(\seg(p)\). Take any \(x^*\in \seg(p)\) and let \(x\in M\) satisfy \(\exp_p(x^*) = x\). For each \(v\in\Uparrow_p^q \), let \(\tilde q_v:=\dist(p,q)v\).  Since \(q\) is critical to \(p\), there is a triangle \(\Delta pqx\) in \(M\) such that \(\angle pqx\leq \pi/2\).  By Toponogov's theorem,  if \(\tilde x\) is a point so that \(\tilde\Delta\tilde q_v\tilde p\tilde x\) is a comparison triangle for \(\Delta qpx\), then \(\tilde\angle\tilde p\tilde q_v\tilde x\leq \angle pqx\leq \pi/2. \) In addition, by the hinge version of Toponogov's theorem, \(\dist(q,x) = |\tilde q_v\tilde x|\leq |\tilde q_v x^*|\). Therefore, by the above, it follows that for all \(v\in \Uparrow_p^q\), \[\tilde \angle \tilde p\tilde q_v x^*\leq\pi/2\]completing the proof.        \end{proof}

\begin{lemma}\label{stupid} Given \(p,q\in M\), \(r_{\lambda_p(q)}\leq\dist(p,q)\) if and only if \(q\) is a point at maximal distance from \(p\). 
\end{lemma}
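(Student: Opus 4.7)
The plan is to reduce the biconditional to the monotonicity of two elementary functions, namely \(\m_k\) on \([0,\diam S^n_k]\) and the quotient \(r\mapsto \m'_k(r-h)/\m'_k(r)\) where \(h:=\dist(p,q)\). The first observation is that \(r\mapsto \m'_k(r-h)/\m'_k(r)\) is strictly increasing on its domain and takes the value \(0\) exactly when \(r=h\); this is already used in the discussion following Proposition \ref{proposition1}. Combining this with Definition \ref{d3}, the condition \(r_{\lambda_p(q)}\leq\dist(p,q)=h\) is thus equivalent to \(\lambda_p(q)\leq 0\) (noting in particular that if \(\lambda_p(q)\geq \Lambda(k,h)\) then \(r_{\lambda_p(q)}=\infty>h\), so this case is excluded, and \(\lambda_p(q)>0\) there, consistent with the equivalence).

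The task therefore reduces to showing \(\lambda_p(q)\leq 0\) if and only if \(q\) realizes the maximal distance from \(p\). For the forward direction, I would unravel the definition of \(\lambda_p(q)\): if \(\lambda_p(q)\leq 0\) then for every \(x\in M\setminus\{q\}\),
\[
\frac{\m_k(\dist(p,x))-\m_k(\dist(p,q))}{\m_k(\dist(q,x))}\leq 0.
\]
Since \(\m_k(\dist(q,x))>0\) for \(x\neq q\), this forces \(\m_k(\dist(p,x))\leq \m_k(\dist(p,q))\), and since \(\m_k\) is strictly increasing on \([0,\diam S^n_k]\) by its explicit formula (\ref{mk}), we conclude \(\dist(p,x)\leq\dist(p,q)\), so \(q\) is at maximal distance from \(p\).

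For the reverse direction, I would use the same monotonicity in the opposite order. If \(q\) is at maximal distance from \(p\), then \(\dist(p,x)\leq \dist(p,q)\) for all \(x\), so by monotonicity of \(\m_k\) the numerator of each term in the supremum defining \(\lambda_p(q)\) is nonpositive, while the denominator is positive; hence \(\lambda_p(q)\leq 0\).

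I do not expect any genuine obstacle here: the entire argument is a direct unwinding of the definitions together with the two monotonicity statements, and no comparison geometry input beyond what is already encoded in the function \(\m_k\) is required. The only place that warrants a moment of care is keeping track of the degenerate endpoint cases (\(\lambda_p(q)=0\) giving equality \(r_{\lambda_p(q)}=\dist(p,q)\), and \(\lambda_p(q)\geq \Lambda(k,h)\) giving \(r_{\lambda_p(q)}=\infty\)), but in both cases the equivalence as stated remains valid.
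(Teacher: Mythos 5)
Your proposal is correct and matches the paper's argument: both reduce the biconditional to the chain of equivalences \(r_{\lambda_p(q)}\leq\dist(p,q)\ \Leftrightarrow\ \lambda_p(q)\leq 0\ \Leftrightarrow\ q\) is at maximal distance, using monotonicity of \(\m_k\) and of \(r\mapsto\m'_k(r-\dist(p,q))/\m'_k(r)\). Your write-up is simply more explicit about the edge case \(\lambda_p(q)\geq\Lambda(k,\dist(p,q))\), which the paper leaves implicit.
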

\begin{proof} Note \(q\) is at maximal distance from \(p\) if and only if for all \(x\in M\) \[\frac{\m(\dist(p,x))-\m(\dist(p,q))}{\m(\dist(q,x))}\leq0,\] which is equivalent to \(\lambda_p(q)\leq 0\) which is  the same as \(r_p(q)\leq\dist(p,q)\). 
\end{proof} 
  
Motivated by this Lemma, we make

\begin{definition}\label{d5} Given \(p,q\in M\) we define the \emph{critical radius} at \(q\) for the distance from \(p\) to be the number \[\cri_p(q):=\max\{\dist(p,q),r_{\lambda_p(q)}\}.\] 
\end{definition}  

\begin{proof}[\bf{Proof of Proposition \ref{introprop}}] The proof of Parts (\ref{p1}) through (\ref{p3}) follow from Lemmas \ref{segcontain} and \ref{halfspacelemma} and Parts (\ref{p4}) and (\ref{p5}) follow from Lemma \ref{stupid} and Definitions \ref{d2},\ref{d3}, and \ref{d5}. 
\end{proof}

\section{ A weaker definition of Sagitta}\label{rsagdefsec}

Then definition of \(r\)-sagitta in the introduction has the feature of being simple and at the same time uses Proposition \ref{introprop} to constrain the geometry of \(M\) in the desired way. With this definition, however, it is unclear if the condition \(\rad M\leq r\) implies \(\sag_r M\leq r\).  This would be the case if there was a positive answer to the following

\begin{question} Given a compact Riemannian manifold \(M\), if \(p\in M\) realizes the radius, does there exist a point \(q\in M\) at maximal distance from \(p\) such that \(p\) is critical for the distance from \(q\)?
\end{question}

 If we denote by \(A(p)\) to be the set of all points at maximal distance from \(p\),  we do have a partial answer to the question above.

\begin{proposition}\label{radcriticallemma} Let \(p\in M\) be a point that realizes the radius of \(M\). Then \(p\) is a critical point for the distance from \(A(p)\).  
\end{proposition}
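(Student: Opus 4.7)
The plan is to argue by contradiction. Suppose \(p\) is not critical for the distance from \(A(p)\); then \(\Uparrow_p^{A(p)}\) fails to be a \(\pi/2\)-net in \(S^{n-1}_1\subset T_pM\), so there exists \(v\in S^{n-1}_1\) with \(\angle(u,v)>\pi/2\) for every \(u\in \Uparrow_p^{A(p)}\). Because \(A(p)\) is compact and limits of minimizing segments from \(p\) to points of \(A(p)\) are again such segments, \(\Uparrow_p^{A(p)}\) is closed, and hence compact, in \(S^{n-1}_1\). Therefore there is \(\varepsilon>0\) with \(\angle(u,v)\geq \pi/2+\varepsilon\) uniformly on \(\Uparrow_p^{A(p)}\). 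I will derive a contradiction by producing \(t>0\) for which the point \(p_t:=\exp_p(-tv)\) satisfies \(\sup_{y\in M}\dist(p_t,y)<\rad M\), violating the assumption that \(p\) realizes the radius.

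The first step is to upgrade the angle estimate to a neighborhood of \(A(p)\): there exists \(\delta\in(0,\rad M)\) so that whenever \(y\in M\) satisfies \(\dist(p,y)\geq \rad M-\delta\), every \(u\in \Uparrow_p^y\) satisfies \(\angle(u,v)\geq \pi/2+\varepsilon/2\). If this failed, one could extract sequences \(y_n\) and \(u_n\in \Uparrow_p^{y_n}\) with \(\dist(p,y_n)\to \rad M\) and \(\angle(u_n,v)<\pi/2+\varepsilon/2\); by compactness of \(M\) and \(S^{n-1}_1\) one may pass to subsequences with \(y_n\to y_\infty\) and \(u_n\to u_\infty\), and an Arzel\`a--Ascoli argument applied to the segments of length \(\dist(p,y_n)\) from \(p\) in directions \(u_n\) produces a minimizing segment from \(p\) to \(y_\infty\) of length \(\rad M\) with initial direction \(u_\infty\). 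Hence \(y_\infty\in A(p)\) and \(u_\infty\in \Uparrow_p^{A(p)}\), contradicting \(\angle(u_\infty,v)\leq \pi/2+\varepsilon/2<\pi/2+\varepsilon\).

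With the extended bound in hand, split the points \(y\in M\) into two cases. If \(\dist(p,y)<\rad M-\delta\), the \(1\)-Lipschitz property of \(\dist(\,\cdot\,,y)\) gives \(\dist(p_t,y)\leq \dist(p,y)+t<\rad M\) for any \(t<\delta\). If \(\dist(p,y)\geq \rad M-\delta\), pick any \(u\in \Uparrow_p^y\); then \(\angle(u,-v)=\pi-\angle(u,v)\leq \pi/2-\varepsilon/2\), and the hinge version of Toponogov's theorem applied at \(p\) with sides \(\dist(p,y)\) and \(t\) and opening angle \(\angle(u,-v)\) yields
\[
\dist(p_t,y)\leq \tilde d\bigl(\dist(p,y),\,t,\,\angle(u,-v)\bigr),
\]
where \(\tilde d(a,t,\alpha)\) denotes the corresponding comparison distance in \(S^n_k\). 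Since \(\partial_t\tilde d|_{t=0}=-\cos\alpha\leq-\sin(\varepsilon/2)\) with second-order remainder uniformly bounded on the compact set \((a,\alpha)\in[0,\rad M]\times[0,\pi/2-\varepsilon/2]\), there are \(t_0,c>0\) with \(\tilde d(a,t,\alpha)\leq a-ct\) for every \(t\in(0,t_0]\); hence \(\dist(p_t,y)\leq \rad M-ct<\rad M\). Choosing \(t<\min\{\delta,t_0\}\) then gives \(\sup_{y\in M}\dist(p_t,y)<\rad M\), contradicting the fact that \(p\) realizes the radius.

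The main obstacle is the passage from the angle bound on \(\Uparrow_p^{A(p)}\) to the neighborhood statement for \(y\) with \(\dist(p,y)\) close to \(\rad M\); this is handled by the Arzel\`a--Ascoli argument in the second paragraph. A minor technical point arises when \(k>0\): the standing assumption that \(M\) is not isometric to \(S^n_k\), together with Toponogov's maximal diameter rigidity, forces \(\rad M<\pi/\sqrt{k}\), so the hinge comparison stays in the regime where \(\tilde d\) is smooth and the first-order expansion used above is valid.
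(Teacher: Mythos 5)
Your proposal is correct and follows essentially the same route as the paper's own proof: argue by contradiction, use compactness of \(\Uparrow_p^{A(p)}\) to strengthen the angle gap to a uniform one, extend that bound by a continuity/compactness argument to all points with distance from \(p\) near \(\rad M\), and then use hinge Toponogov plus the triangle inequality to show that moving a short distance in the direction opposite the bad vector strictly reduces the supremal distance. The only cosmetic differences are that the paper splits cases by the angle a segment makes with \(-g\) rather than by \(\dist(p,y)\), and phrases the neighborhood step as ``continuity of \(\exp_p\)'' where you spell out the Arzel\`a--Ascoli argument; the substance is the same.
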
Now consider the set  \begin{eqnarray}\label{Aset} A_{h,r}(p) = \{q\in\crit(p)\mid\dist(p,q)\leq h  \text{ and } \cri_p(q)\leq r\}.\end{eqnarray}By Proposition \ref{introprop} if \(p\) realizes the radius of \(M\), \(A_{h,r}(p)\) coincides with \(A(p)\) when \(h=r=\rad M\). Therefore, by Proposition \ref{radcriticallemma}, we can make the following

\begin{def1}Suppose \(M\) satisfies \(\sec M\geq k\) and \(\rad M\leq r\). We let \[\widetilde\sag_rM:= \inf\{h\mid p\in\crit(A_{r,h}(p)) \text{ for some } p\in M\}\]  and call  this number  the modified \(r\)-\emph{sagitta of} \(M\).

\end{def1}
Clearly \(\mathcal M^n_{k,r,h}\subset \widetilde{\mathcal M}^n_{k,r,h}\) and by Proposition \ref{radcriticallemma}, \(\mathcal M^n_{k,r}\subset\widetilde{\mathcal M}^n_{k,r,r}\) as desired. As we will now only consider \(\widetilde\sag_rM\) for the proof of Theorem \ref{mainthrm}, in all that follows we drop the ``\(\sim"\) from our notation. 

\begin{proof}[\bf{Proof of Proposition} \ref{radcriticallemma}]   
For any \(x\in M\), let \(\sphericalangle\) denote the induced metric on unit tangent sphere \(U_xM = S^{n-1}_1\). Set \(R = \rad M\) and suppose for a contradiction that there is a vector \(g\in U_pM\) such that \[\sphericalangle(g,\Uparrow_p^{A(p)})>\pi/2.\] 
As \(\Uparrow_p^{A(p)}\subset U_pM\) is compact, there is a \(\theta_0>0\) so that  \[\sphericalangle(g,\Uparrow_p^{A(p)})\geq\pi/2+\theta_0.\] 
By definition of \(A(p)\) and continuity of \(\exp_p\), there is an \(r_0>0\) so that  any normal geodesic \(\sigma_p\) emanating from \(p\) that satisfies \[\sphericalangle(g,\dot\sigma_p(0))\leq \pi/2+\theta_0/2 \]
cannot be distance minimizing on \([0,R-r_0]\).

Let \(\tilde p\in S^2_k\) and let \(\tilde \gamma_{\tilde p}\) be a geodesic satisfying \(\tilde\gamma_{\tilde p}(0) = \tilde p\). By first variation there is a \(t_0>0\) so that if \(\tilde x\in S^2_k\) makes an angle less than \(\pi/2 -\theta_0/2 \) with \(\tilde\gamma_{\tilde p}\) at \(\tilde p\), then
\[\lvert\tilde\gamma_{\tilde p}(t)\tilde x\rvert<\lvert\tilde p\tilde x\rvert\]
for all \(t\in (0,t_0)\).

Now let \(\gamma_p:\R\rightarrow M\) be a geodesic that satisfies \(\gamma_p(0) = p\) and \(\dot\gamma_p(0) = -g\). 
For any point \(x\in M\) for which there is a segment \(\gamma_{px}\) from \(p \) to \(x\) so that \(\sphericalangle (-g,\dot\gamma_{px}(0))<\pi/2-\theta_0/2\), by the hinge version of Toponogov's Theorem \[\dist(\gamma_{p}(t),x)<\dist(p,x)\leq R\] for all \(t\in(0,t_0)\). 

On the other hand we know any point \(x\in M\) for which there is a segment \(\gamma_{px}\) from \(p \) to \(x\) with \(\sphericalangle(-g,\dot\gamma_{px}(0))\geq\pi/2-\theta_0/2,\) satisfies \[\dist(p,x)<R-r_0.\] 

By the triangle inequality, for any point \(x\in M\) the point \(\gamma_{p}(c)\) where \(c\in(0,\min\{t_0,r_0\})\)  satisfies \(\dist(\gamma_{p}(c),x)<R\) and so \(M\) must have radius less than \(R\).    
\end{proof} 
\section{Volume Bound}\label{volumesection}
Next we aim to prove  the volume inequality in Part 1 of Theorem \ref{mainthrm}. 
We begin with a volume inequality in \(S^n_k\) that follows from the result of the Appendix  in \cite{GrovePet2}, and is the complimentary version of Inequality  \((1.4)\) in \cite{GrovePet3}.
 
 \begin{lemma}\label{gpvolsnk}For a point \(\tilde p\in S^n_k\), let \(S(\tilde p,R) := \{\tilde a\in S^n_k\mid |\tilde p\tilde a|=R\}\) denote the metric sphere of radius \(R\) at \(\tilde p\).  Given a real number \(R\leq \frac{1}{2}\diam S^n_k\), if \(\tilde C\subset S(\tilde p,R)\) and \(\Uparrow_{\tilde p}^{\tilde C}\subset U_{\tilde p}S^n_k\)  forms a \(\pi/2\)-net, then for any \(r>0\), \begin{eqnarray}\label{volinequality}\vol \left(\bigcap_{\tilde c\in \tilde C} D^n_k(\tilde c,r)\right)\leq\vol(D^n_k(\tilde c_1,r)\cap D^n_k(\tilde c_2,r))\end{eqnarray}where \(\tilde c_1,\tilde c_2\in S(\tilde p,R)\) and \(\angle c_1 p c_2 = \pi\). Equality in (\ref{volinequality}) occurs only when \(\tilde C = \{\tilde c_1,\tilde c_2\}\) with \(\angle \tilde c_1 \tilde p\tilde c_2=\pi\). \end{lemma}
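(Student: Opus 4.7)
The plan is to reduce this to the complementary union statement proved in the appendix of \cite{GrovePet2} — which appears as Inequality (1.4) of \cite{GrovePet3} — by slicing in polar coordinates at \(\tilde p\) and performing Fubini. First I would parametrize \(\tilde x\in S^n_k\setminus\{\tilde p\}\) as \(\exp_{\tilde p}(tv)\) with \(v\in U_{\tilde p}S^n_k\), and for each \(\tilde c=\exp_{\tilde p}(Rw)\in \tilde C\), use the law of cosines (\ref{lawofcosines}) to rewrite \(\tilde x\in D^n_k(\tilde c,r)\) as \(\cos\sphericalangle(v,w)\geq F(t)\), where
\[
F(t):=\frac{\m_k(t)+\m_k(R)-k\m_k(R)\m_k(t)-\m_k(r)}{\m_k'(R)\m_k'(t)}
\]
depends only on \(t,R,r,k\). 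Setting \(W:=\Uparrow_{\tilde p}^{\tilde C}\), we get \(\bigcap_{\tilde c\in\tilde C} D^n_k(\tilde c,r)=\{(t,v): \cos\theta_{\max}(v)\geq F(t)\}\), where \(\theta_{\max}(v):=\sup_{w\in W}\sphericalangle(v,w)\). Applying the \(\pi/2\)-net hypothesis to \(-v\) forces \(\theta_{\max}(v)\geq \pi/2\), so only \(t\) with \(F(t)\leq 0\) contribute, and the same computation writes the antipodal lens \(L=D^n_k(\tilde c_1,r)\cap D^n_k(\tilde c_2,r)\) as \(\{(t,v): |\cos\sphericalangle(v,w_1)|\leq -F(t)\}\) over this same range of \(t\).

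Then I would apply Fubini in polar coordinates. Since the Jacobian of \(\exp_{\tilde p}\) factors as a radial weight times the standard measure on \(U_{\tilde p}S^n_k\cong S^{n-1}_1\), the claim reduces at each fixed \(t\) with \(F(t)\leq 0\) to the spherical inequality
\[
\vol_{S^{n-1}_1}\left(\bigcap_{w\in W} B(w,\pi-\arccos|F(t)|)\right)\leq \vol_{S^{n-1}_1}\bigl(B(w_1,\pi-\arccos|F(t)|)\cap B(-w_1,\pi-\arccos|F(t)|)\bigr),
\]
which, written in complementary form as an assertion about the union \(\bigcup_{w\in W} B(w,\arccos|F(t)|)\) of spherical caps whose centers form a \(\pi/2\)-net, is exactly the rearrangement statement of the \cite{GrovePet2} appendix. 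Integrating this slicewise inequality against the common radial weight gives the desired volume bound. The equality clause follows by tracking the appendix's rearrangement: equality at every \(t\) where the slices are non-empty forces \(W\) to coincide with an antipodal pair \(\{w_1,-w_1\}\), hence \(\tilde C=\{\tilde c_1,\tilde c_2\}\).

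The main obstacle is making sure the appendix's rearrangement really applies in the form needed — in particular, that it handles arbitrary \(n\) and arbitrary spherical cap radii \(\arccos|F(t)|\in[0,\pi/2]\) on \(S^{n-1}_1\). If the appendix is stated only for a limited range, the fallback plan is to establish the spherical slice inequality directly by Steiner-style symmetrization on \(S^{n-1}_1\): rotate each cap \(B(w,\arccos|F(t)|)\) toward a fixed axis \(\pm w_1\) while preserving the \(\pi/2\)-net property, which is available because any \(\pi/2\)-net on \(S^{n-1}\) can be coarsened to an antipodal pair while only shrinking the intersection of the larger caps. The sign of \(k\) plays no role beyond the formula for \(F\), so both compact and non-compact \(S^n_k\) are treated uniformly.
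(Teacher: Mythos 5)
Your proof is correct and follows essentially the same route as the paper: both decompose $\bigcap_{\tilde c}D^n_k(\tilde c,r)$ in polar coordinates centered at $\tilde p$ so that each radial slice is an intersection of spherical caps of a common angular radius whose centers $\Uparrow_{\tilde p}^{\tilde C}$ form a $\pi/2$-net, and then invoke the cap rearrangement result from the appendix of \cite{GrovePet2}. Your explicit derivation of $F(t)$ via the law of cosines and the Fubini reduction is, if anything, a more transparent write-up of the paper's somewhat terse decomposition $\bigcup_{\theta}\rho_h(\theta)B_s(-v_{\tilde c},\theta)$.
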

\begin{proof} We can assume \(r\geq R\). Note first \begin{eqnarray}\label{third}\bigcap_{\tilde c\in \tilde C}D^n_k(\tilde c,r)\subset D^n_k(\tilde p, r).\end{eqnarray} If not, then there is a point \(\tilde x\in\cap_{\tilde c\in \tilde C}D^n_k(\tilde c,r)\),  with \(|\tilde p\tilde x|>r\). Then we can find a segment  \(\tilde\sigma\)  from \(\tilde p\) to \(\tilde x\) such that the angle at \(\tilde p\) between \(\tilde \sigma\) and the segment from \(\tilde p\) to any \(\tilde  c\in \tilde C\) must be strictly less than \(\pi/2\). This contradicts the assumption \(\Uparrow_{\tilde p}^{\tilde C}\subset U_{\tilde p}S^n_k\)  forms a \(\pi/2\)-net.

So,\begin{eqnarray*}\vol \left(\bigcap_{\tilde c\in \tilde C} D^n_k(\tilde c,r)\right ) = \vol D^n_k(\tilde p,r)-\vol\left(\bigcup_{\tilde c\in \tilde C} D^n_k(\tilde p,r)\setminus D^n_k(\tilde c,\tilde p)\right). \end{eqnarray*}
For each \(v_{\tilde c} \in \Uparrow_{\tilde p}^{\tilde C}\), let \(-v_{\tilde c}\) denote the direction opposite \(v_{\tilde c}\) and let \(B_s(-v_{\tilde c}, \theta)\) denote the metric ball centered at \(-v_{\tilde c}\) of radius \(\theta\) in  \(U_{\tilde p}S^{n}_k\). Then note for each \(h\in [r,r+R], \) there is a \(\theta_h\geq0\) and a continuous function \(\rho_h:[0,\theta_h]\rightarrow \R\) so that for each \(\tilde c\in \tilde C\), \begin{eqnarray*} D^n_k(\tilde p,r)\setminus D^n_k(\tilde c,r) &=& \bigcup_{h\in[r,r+R]}S(\tilde c,h)\cap D^n_k(\tilde p,r)\\ &=&\bigcup_{\theta\in[0,\theta_h]}\rho_h(\theta)B_s(-v_{\tilde c},\theta).\end{eqnarray*} The result now  follows from the Appendix in \cite{GrovePet2}.   
\end{proof}

\begin{proof}[\bf{Proof of Part 1 of Theorem \ref{mainthrm}}] Let \(h,r\in (0,\frac{1}{2}\diam S^n_k]\) with \(h\leq r\), and assume \(M\in {\mathcal M}^n_{k,r,h}\). Take a point \(p\in M\) such that the distance from \(A_{r,h}(p)\) is critical at \(p\) where \(A_{r,h}(p)\) is the set defined in (\ref{Aset}). Let \(\tilde p\) be the origin of \(\seg(p)\). We now show there is a set \(\tilde C\subset S(\tilde p,r-h)\subset S^n_k\) satisfying the hypothesis of Lemma \ref{gpvolsnk}. 

  For each \(q\in A_{r,h}(p)\)   and \({v_q\in\Uparrow_p^q}\), let  \(\tilde\gamma_{v_q}\) and \( \tilde a_{v_q} \) be as in  Definition \ref{clerical}.
By assumption, \(\Uparrow_{p}^{A_{r,h}(p)}\subset U_pM\) forms a \(\pi/2\)-net and by Lemma \ref{segcontain},  \begin{eqnarray}\label{segcontainproof}\seg(p)&\subset& \bigcap_{q\in A_{r,h}(p)}D^n_k(\tilde a_{v_q},\cri_p(q)).\end{eqnarray}
By definition, for each \(q\in A_{r,h}(p)\), \begin{eqnarray*} \cri_p(q)&\leq& r,\\
\dist(p,q)&\leq& h,\end{eqnarray*} and for every \(v_q\in \Uparrow_p^q\),  \begin{eqnarray*}\tilde a_{v_q} &=&  \tilde \gamma_{v_q}(\dist(p,q) - \cri_p(q)),\\ 
\tilde p &=& \tilde\gamma_{v_q}(0).\end{eqnarray*}

For each \(q\in A_{r,h}(p)\) and \(v_q\in \Uparrow_p^q \), set \[\tilde b_{v_q}:=\tilde\gamma_{v_q}(\dist(p,q)-r),
\] and \[\tilde c_{v_q}:=\tilde\gamma_{v_q}(h-r).\] Then \(|\tilde b_{v_q}\tilde  a_{v_q}| =  r -\cri_p(q)\), and by the Triangle Inequality,\begin{eqnarray}\label{first}D^n_k(\tilde a_{v_{q}},\cri_p(q))\subset D^n_k(\tilde b_{v_q},r).\end{eqnarray}
Also, \(|\tilde b_{v_q}\tilde  p| = r-\dist(p,q)\) so,  \begin{eqnarray}\label{second}|\tilde c_{v_q}\tilde p| 
 =  r-h\leq |\tilde b_{v_q}\tilde p|.
 \end{eqnarray}

In addition since \(p\) is critical for the distance from \(A_{r,h}(p)\), by  (\ref{third}), \begin{eqnarray*}\bigcap_{q\in A_{r,h}(p)}D^n_k(\tilde b_{v_q},r)\subset D(\tilde p, r).\end{eqnarray*} 
It then follows  follows from (\ref{segcontainproof}), (\ref{first}), (\ref{second}),  and the Triangle Inequality that  \begin{eqnarray}\label{bigdisksegcontain}\seg(p)&\subset& \bigcap_{q\in A_{r,h}(p)}D^n_k( \tilde a_{v_q},\cri_p(q))\nonumber\\&\subset& \bigcap_{q\in A_{r,h}(p)}D^n_k( \tilde b_{v_q},r)\nonumber\\&\subset&\bigcap_{q\in A_{r,h}(p)}D^n_k( \tilde c_{v_q},r).\end{eqnarray}
From Lemma \ref{gpvolsnk} and Equation (\ref{second}), we have for any \(\tilde c_1,\tilde c_2\in S^n_k\) with \(|\tilde c_1\tilde c_2|=2(r-h),\) \begin{eqnarray*}\vol M &\leq& \vol\left(\seg(p)\right)\\ &\leq& \vol\left(\bigcap_{q\in A_{r,h}(p)}D^n_k( \tilde c_{v_q},r)\right)\\ &\leq& \vol(D^n_k(\tilde c_1,r)\cap D^n_k(\tilde c_2,r))\\
&=& \vol L^n_k(h,r)\end{eqnarray*} as desired. 
 \end{proof}
For future reference, we extract from Equations (\ref{second}) and (\ref{bigdisksegcontain}) in the proof above the following
\begin{proposition}\label{volcorprop}
If \(h,r\in (0,\frac{1}{2}\diam S^n_k]\)  are real numbers with \(h\leq r\) and \(M\in \mathcal M^n_{k,r,h}\), then there is a point \(p\in M\) such that if \(\tilde p\) is the origin of \(\seg(p)\), there is a \(\pi/2\)-net   \(\{\tilde c_{v_q}\}_{v_q\in \Uparrow_{p}^{A_{r,h(p)}}}\) in the metric sphere \(S(\tilde p,r-h)\) so that\begin{eqnarray*}\seg(p)&\subset&  \bigcap_{q\in A_{r,h}(p)}D^n_k( \tilde a_{v_q},\cri_p(q))\\ &\subset& \bigcap_{q\in A_{r,h}(p)}D^n_k( \tilde c_{v_q},r).\end{eqnarray*} \end{proposition}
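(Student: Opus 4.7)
The approach is to retrace the proof of Part 1 of Theorem \ref{mainthrm} and isolate the two displayed inclusions, which are already present (implicitly) in the chain (\ref{bigdisksegcontain}). By the hypothesis \(M \in \mathcal M^n_{k,r,h}\) (so \(\sag_r M \leq h\)), choose \(p \in M\) such that \(p\) is critical for the distance from \(A_{r,h}(p)\); equivalently, \(\Uparrow_p^{A_{r,h}(p)} \subset U_pM\) is a \(\pi/2\)-net. For each \(q \in A_{r,h}(p)\) and each \(v_q \in \Uparrow_p^q\), reuse the geodesics \(\tilde\gamma_{v_q}\) of Definition \ref{clerical} and set
\[
\tilde a_{v_q} := \tilde\gamma_{v_q}(\dist(p,q) - \cri_p(q)), \quad \tilde b_{v_q} := \tilde\gamma_{v_q}(\dist(p,q) - r), \quad \tilde c_{v_q} := \tilde\gamma_{v_q}(h - r).
\]
The required \(\pi/2\)-net in \(S(\tilde p, r-h)\) is the collection \(\{\tilde c_{v_q}\}\): since \(h \leq r\), each \(\tilde c_{v_q}\) lies at distance \(r-h\) from \(\tilde p\) along the geodesic through \(\tilde p\) with initial direction \(-v_q\), and the \(\pi/2\)-net property transfers (up to sign) from \(U_pM\) to the set of directions from \(\tilde p\) to the \(\tilde c_{v_q}\), since a set \(A \subset S^{n-1}\) is a \(\pi/2\)-net if and only if \(-A\) is.

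For the first inclusion, I would apply Lemma \ref{segcontain} (or Lemma \ref{halfspacelemma} when \(\cri_p(q) = \tfrac{1}{2}\diam S^n_k\)) to each individual \(q \in A_{r,h}(p)\) and intersect, yielding \(\seg(p) \subset \bigcap_{q} D^n_k(\tilde a_{v_q}, \cri_p(q))\). I would then route the second inclusion through the auxiliary disks \(D^n_k(\tilde b_{v_q}, r)\). Its first leg, \(D^n_k(\tilde a_{v_q}, \cri_p(q)) \subset D^n_k(\tilde b_{v_q}, r)\), follows from \(|\tilde a_{v_q} \tilde b_{v_q}| = r - \cri_p(q)\) and the triangle inequality.

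The main technical step is the remaining leg \(\bigcap_{q} D^n_k(\tilde b_{v_q}, r) \subset \bigcap_{q} D^n_k(\tilde c_{v_q}, r)\), since the corresponding inclusion at the individual-disk level is false. My plan is to exploit Equation (\ref{third}) of Lemma \ref{gpvolsnk}: the \(\pi/2\)-net condition forces \(\bigcap_{q} D^n_k(\tilde b_{v_q}, r) \subset D^n_k(\tilde p, r)\), so any point \(\tilde x\) in the intersection lies within \(r\) of both \(\tilde p\) and each \(\tilde b_{v_q}\). Since \(\dist(p,q) \leq h\), the point \(\tilde c_{v_q}\) lies on the geodesic segment joining \(\tilde p\) to \(\tilde b_{v_q}\); since disks of radius \(r \leq \tfrac{1}{2}\diam S^n_k\) in \(S^n_k\) are convex, applying convexity of the disk \(D^n_k(\tilde x, r)\) (which contains both \(\tilde p\) and \(\tilde b_{v_q}\)) places \(\tilde c_{v_q} \in D^n_k(\tilde x, r)\), i.e., \(\tilde x \in D^n_k(\tilde c_{v_q}, r)\). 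Chaining the two legs completes the second inclusion.
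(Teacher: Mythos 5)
Your proof is correct and follows essentially the same approach as the paper, which obtains this proposition by extracting the chain of inclusions from the proof of Part~1 of Theorem~\ref{mainthrm}. The one place where you add value is the third inclusion \(\bigcap_q D^n_k(\tilde b_{v_q},r)\subset\bigcap_q D^n_k(\tilde c_{v_q},r)\): the paper attributes this to ``the Triangle Inequality,'' but as you implicitly observe, the naive triangle inequality \(|\tilde x\tilde c_{v_q}|\le|\tilde x\tilde b_{v_q}|+|\tilde b_{v_q}\tilde c_{v_q}|\le r+(h-\dist(p,q))\) does not close; your argument via geodesic convexity of the disk \(D^n_k(\tilde x,r)\) of radius \(r\le\tfrac12\diam S^n_k\), together with the observation that \(\tilde c_{v_q}\) lies on the segment from \(\tilde p\) to \(\tilde b_{v_q}\) and both endpoints lie in \(D^n_k(\tilde x,r)\), is the correct justification.
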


\section{Convergence and Topological Identification}\label{convergencesection}

The goal of this section is to prove the topological version of Part 2 of Theorem \ref{mainthrm} by proving Theorem \ref{convergethrm}. Most of the ideas in this section are  taken directly from the analogous section in \cite{GrovePet3}. 

We fix \(n\geq 2\) and real numbers \(k\in \R\), \(h,r\in (0,\frac{1}{2}\diam S^n_k\)], with \(h\leq r\). Fix a  sequence \(\{M_i\}_{i=1}^\infty\subset \mathcal M^n_{k,r,h}\) of compact, Riemannian \(n\)-manifolds 
satisfying \[
 \vol M_i\rightarrow \vol L^n_k(h,r).\]

 In each \(M_i\), take a point \(p_i\in M_i\) for which \(  A_{r, h}(p_i)\) is nonempty and critical at \(p_i\). By Gromov's Compactness Theorem, \(M_i\rightarrow X\) where \(X\) is an \(n\)-dimensional Alexandrov space with curvature bounded below by \(k\). Let \(p := \lim p_i\). It follows that a subsequence  of the sequence of domains \(\{\seg(p_i)\}\) converges to a compact   subset \(\seg(p)\subset S^n_k\), and a subsequence of  the sequence of maps \(\{\exp_{p_i}:\seg(p_i)\rightarrow M_i\}\) converges to a surjective, 1-Lipschitz map \(\exp_p:\seg(\tilde p)\rightarrow X \), (see \cite{GrovePet2} or \cite{GrovePet3} for details). The set \(\seg(p)\subset S^n_k\) is star convex at a point \(\tilde p\in\seg(p)\) and the map \(\exp_p\) maps segments emanating from \(\tilde p\) in \(\seg(p)\) to segments emanating from \(p\) in \(X\). Conversely, any segment in \(X\) emanating from \(p\) is in the image under \(\exp_p\) of a segment in \(\seg(p)\) that emanates from \(\tilde p\). 

\begin{proposition}\label{seglens} \(\seg(p) = L^n_k(h,r)\)\end{proposition}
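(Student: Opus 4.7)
The plan is to push Proposition \ref{volcorprop} through the Gromov--Hausdorff limit and then invoke the equality clause of Lemma \ref{gpvolsnk}. For each \(i\), Proposition \ref{volcorprop} supplies a \(\pi/2\)-net \(\tilde C_i\subset S(\tilde p_i,r-h)\) and the inclusion
\[
\seg(p_i)\;\subset\;\bigcap_{\tilde c\in \tilde C_i}D^n_k(\tilde c,r).
\]
Combined with the fact that \(\exp_{p_i}\colon\seg(p_i)\to M_i\) is surjective and \(1\)-Lipschitz, and with Lemma \ref{gpvolsnk}, this yields
\[
\vol M_i\;\leq\;\vol\seg(p_i)\;\leq\;\vol\!\Bigl(\bigcap_{\tilde c\in \tilde C_i}D^n_k(\tilde c,r)\Bigr)\;\leq\;\vol L^n_k(h,r).
\]
Since \(\vol M_i\to\vol L^n_k(h,r)\) by hypothesis, every term in this chain converges to \(\vol L^n_k(h,r)\).

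After extracting a further subsequence, the compact sets \(\tilde C_i\) converge in the Hausdorff topology to a compact set \(\tilde C\subset S(\tilde p,r-h)\). The \(\pi/2\)-net property passes to this limit: for every \(u\in U_{\tilde p}S^n_k\) there exist \(v_i\in\Uparrow_{\tilde p_i}^{\tilde C_i}\) with \(\sphericalangle(u,v_i)\leq\pi/2\), and a subsequential limit produces \(v\in\Uparrow_{\tilde p}^{\tilde C}\) with \(\sphericalangle(u,v)\leq\pi/2\). Because each \(D^n_k(\tilde c,r)\) is closed, Hausdorff convergence gives
\[
\seg(p)\;\subset\;\bigcap_{\tilde c\in \tilde C}D^n_k(\tilde c,r),
\]
and upper semicontinuity of volume under Hausdorff convergence, combined with the chain above, forces
\[
\vol\!\Bigl(\bigcap_{\tilde c\in \tilde C}D^n_k(\tilde c,r)\Bigr)\;=\;\vol L^n_k(h,r).
\]
The equality clause of Lemma \ref{gpvolsnk} then forces \(\tilde C\) to consist of precisely two antipodal points \(\{\tilde c_1,\tilde c_2\}\) with \(|\tilde c_1\tilde c_2|=2(r-h)\); hence \(\bigcap_{\tilde c\in \tilde C}D^n_k(\tilde c,r)\) is an isometric copy of \(L^n_k(h,r)\), which we identify with \(L^n_k(h,r)\) itself via an isometry of \(S^n_k\) fixing \(\tilde p\). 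Consequently \(\seg(p)\subset L^n_k(h,r)\) and \(\vol\seg(p)=\vol L^n_k(h,r)\).

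To upgrade the inclusion to set equality, observe that \(\seg(p)\) is closed and that \(L^n_k(h,r)\) is the closure of its interior, being the intersection of two closed geodesic disks meeting transversally. If \(\seg(p)\subsetneq L^n_k(h,r)\), the relatively open set \(L^n_k(h,r)\setminus\seg(p)\) would meet the interior of \(L^n_k(h,r)\) and therefore have strictly positive volume, contradicting the volume equality. Hence \(\seg(p)=L^n_k(h,r)\).

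The main obstacle I anticipate is the passage to the limit: one must verify that the \(\pi/2\)-net property of \(\tilde C_i\) survives Hausdorff convergence in a form strong enough to trigger the equality clause of Lemma \ref{gpvolsnk}, and in particular that the limit \(\tilde C\) neither escapes the sphere \(S(\tilde p,r-h)\) nor acquires extraneous points that would prevent it from being antipodal. Once the two-point antipodal limit configuration is established, the remaining measure-theoretic upgrade to set equality is comparatively routine.
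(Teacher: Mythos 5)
Your proof is correct and follows essentially the same route as the paper's: push Proposition \ref{volcorprop} to the limit, sandwich the volume of \(\seg(p)\) between \(\lim\vol M_i\) and \(\vol L^n_k(h,r)\), and invoke the rigidity clause of Lemma \ref{gpvolsnk} to identify the limiting \(\pi/2\)-net as an antipodal pair. The only difference is cosmetic: you take Hausdorff limits of the nets \(\tilde C_i\) directly inside the fixed model \(S^n_k\), whereas the paper first passes the critical-point sets \(A_{r,h}(p_i)\) to a limit \(A_{r,h}(p)\subset X\), verifies that criticality and the bounds \(\cri_p(q)\le r\), \(\dist(p,q)\le h\) survive, and then reconstructs the net \(\{\tilde c_{v_q}\}\) from that limiting critical data. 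Both versions need the same supporting facts --- that \(\seg(p_i)\to\seg(p)\) with \(\vol\seg(p_i)\to\vol\seg(p)\), and that the \(\pi/2\)-net property is closed under the limit --- and neither introduces any new idea; your formulation has the small advantage of never having to apply the construction from Proposition \ref{volcorprop} in the Alexandrov limit space.
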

\begin{proof}
Set \(A_{r,h}(p) := \lim_i A_{r,h}(p_i)\). From the definition of eccentricity, it follows that for each \(q_i\in A_{r,h}(p_i)\) converging to \(q\in A_{r,h}(p)\), \(\cri_{p_i}(q_i)\rightarrow\cri_p(q)\).  Therefore, because every \(q_i\in A_{r,h}(p_i)\) satisfies \(\cri_{p_i}(q_i)\leq r\) and \(\dist(p_i,q_i)\leq h\), the same is true for every \(q\in A_{r,h}(p)\). In addition,  for every \(i\) the distance from the set \(A_{r,h}(p_i)\) is critical at \(p_i\), it follows that the distance from \(A_{r,h}(p)\) is critical at \(p\), i.e., \(\Uparrow_p^{A_{r,h}(p)}\subset \Sigma_p\) is a \(\pi/2\)-net. Therefore, since  for every \(i\), \[\seg(p_i)\subset \bigcap_{q_i\in A_{r,h}(p_i)}D^n_k( \tilde a_{v_{q_i}},\cri_{p_i}(q_i)),\]it follows that \[\seg(p)\subset \bigcap_{q\in A_{r,h}(p)}D^n_k( \tilde a_{v_q},\cri_p(q)).\]By Proposition \ref{volcorprop}, we can select points \(\{\tilde c_{v_{q}}\}\subset S(\tilde p,r-h)\) indexed over \( {v_{q}\in \Uparrow_p^{A_{r,h}(p)}}\) such that \[\bigcap_{q\in A_{r,h}(p)}D^n_k( \tilde a_{v_q},\cri_p(q))\subset \bigcap_{q\in A_{r,h}(p)}D^n_k( \tilde c_{v_q},r).\] Because \(\seg(p_i)\rightarrow \seg(p)\), it follows that,  \begin{eqnarray*}\vol L^n_k(h,r)&=&\lim\vol M_i
\\
&\leq& \lim \vol(\seg(p_i))\\
&=&\vol(\seg(p))\\
&\leq& \vol\left(\bigcap_{q\in A_{r,h}(p)}D^n_k( \tilde c_{v_q},r)\right)\\
&\leq& \vol L^n_k(h,r).
\end{eqnarray*} 
From the equality statement in Lemma \ref{gpvolsnk}, we must have \(\{\tilde c_{v_{q}}\}=\{\tilde c_{v_{q_1},}\tilde c_{v_{q_2}}\}\) where \(|\tilde c_{v_{q_1}}\tilde c_{v_{q_2}}| = 2(r-h).\) Therefore,\[\seg(p) \subset D^n_k(\tilde c_{v_{q_1}},r)\cap D^n_k(\tilde c_{v_{q_2}},r) = L^n_k(h,r).  \] From this and that \(\vol(\seg(p)) =\vol L^n_k(h,r)\), it follows that \(\seg(p) = L^n_k(h,r).\) \end{proof}

To continue,  we'll first fix notation for certain related geometric attributes of \(L^n_k(h,r)\). Let \(\tilde a_1,\tilde a_2,\tilde q_1,\tilde q_2\in S^n_k\) such that \(|\tilde a_1\tilde a_2| = 2(r-h)\), \(|\tilde p \tilde q_i| = h\), and \(|\tilde a_i\tilde q_i | = r\). We  assume that \[\seg(p) = L^n_k(h,r):=D^n_k(\tilde a_1,r)\cap D^n_k(\tilde a_2,r).\]
Denote the interior of \(L^n_k(h,r)\) by \(\mathring L^n_k(h,r)\). 

For \(i=1,2\), let \[D^{n-1}_i:=\partial D^n_k(\tilde a_1,r)\cap\partial L^n_k(h,r)\] and note \(D^{n-1}_i\) is a disk in the metric sphere  \(\partial D^n_k(\tilde a_i,r)\) centered at \(\tilde q_i\). Moreover, these disks have equal radii and  \(\partial L^n_k(h,r) = D^{n-1}_1\cup D^{n-1}_2\) (See Figure 3).

For distinct \(i,j\in \{1,2\}\), let \[B^{n-1}_i = \mathring D^{n-1}_i:=\{\tilde x\in S^n_k\mid|\tilde x\tilde a_i| < r\text{ and }|\tilde x\tilde a_j|= r\}\] be the interior of \(D^{n-1}_i\)

\begin{figure}
\begin{tikzpicture}[line cap=round,line join=round,>=triangle 45,x=1.0cm,y=1.0cm]
\clip(-3,-2.5) rectangle (3.3,3);
\draw [shift={(2,0)}]  plot[domain=2.3:3.98,variable=\t]({1*3*cos(\t r)+0*3*sin(\t r)},{0*3*cos(\t r)+1*3*sin(\t r)});
\draw [shift={(-2,0)}]  plot[domain=-0.84:0.84,variable=\t]({1*3*cos(\t r)+0*3*sin(\t r)},{0*3*cos(\t r)+1*3*sin(\t r)});
\draw [shift={(0,0)}]  plot[domain=0:3.14,variable=\t]({0*2.23*cos(\t r)+-1*0.2*sin(\t r)},{1*2.23*cos(\t r)+0*0.2*sin(\t r)});
\draw [shift={(0,0)},dotted]  plot[domain=-3.14:0,variable=\t]({0*2.23*cos(\t r)+-1*0.2*sin(\t r)},{1*2.23*cos(\t r)+0*0.2*sin(\t r)});
\draw [dotted] (-1,0.02)-- (1,-0.02);
\draw (1,-0.02)-- (2.79,-0.06);
\draw (-1,0.02)-- (-2.33,0.06);
\draw [shift={(0.45,0)}] plot[domain=-3.14:0,variable=\t]({0*1.7*cos(\t r)+1*0.16*sin(\t r)},{-1*1.7*cos(\t r)+0*0.16*sin(\t r)});
\draw [shift={(0.45,0)},dotted]  plot[domain=0:3.14,variable=\t]({0*1.7*cos(\t r)+1*0.16*sin(\t r)},{-1*1.7*cos(\t r)+0*0.16*sin(\t r)});
\draw (0,.22) node{\tiny\(\tilde p\)};
\draw (3,0) node{\(\tilde s\)};
\draw (1.15,0.15) node{\tiny\(\tilde q_1\)};
\draw (1,-0.045) node{\huge\(\cdot\)};
\draw (0,-.028) node{\huge\(\cdot\)};
\draw (0.55,0.19) node{\tiny\(\tilde s(t)\)};
\draw (0.47,-0.04) node{\huge\(\cdot\)};
\draw (-.99,.0) node{\huge\(\cdot\)};
\draw (-.82,.18) node{\tiny\(\tilde q_2\)};
\draw (-1.97,.02) node{\huge\(\cdot\)};
\draw (-1.97,.18) node{\tiny\(\tilde a_1\)};
\draw (1.97,0.1) node{\tiny\(\tilde a_2\)};
\draw (1.97,-.06) node{\huge\(\cdot\)};
\draw(1.9,-1.5) node{\small\(D^{n-1}_1\)};
\draw(-1.6,-1.5) node{\small\(D^{n-1}_2\)};
\draw(1.8,1.5) node{\small\(S_t\)};
\draw[-latex'] (-1.3,-1.5)to[out= -45, in = 165](-.4,-1.5);
\draw[latex'-] (0.1,-1.5) to[out= -45, in = -185](1.3,-1.5);
\draw[latex'-] (.4,1.5) to[out= -45, in = 18](1.3,1.5);
\end{tikzpicture}
\caption{}
\end{figure}

Let \(\tilde s:\R\rightarrow S^n_k\) be the geodesic through \(\tilde a_1\) and \(\tilde a_2\) such that \(\tilde s(-h) = \tilde q_2\), \(\tilde s(0) = \tilde p\), and \(\tilde s(h) = \tilde q_1\). For each \(t\in [-h,h]\), let \(H_t\) be the totally geodesic hyperplane in \(S^n_k\) through \(\tilde s(t)\) and orthogonal to \(\tilde s'(t)\) and let \[S_t := H_t\cap\partial L^n_k(h,r)\] be the \((n-2)\)-dimensional metric sphere in \(H_t\).  Note that \(S_0 = \partial D^{n-1}_1=\partial D^{n-1}_2 =D^{n-1}_1\cap D^{n-1}_2\). 

We recall an observation made in \cite{GrovePet3}. Let \(M\) be a compact, Riemannian \(n\)-manifold with \(\sec M\geq k\in \R\). Let \(p\in M\) be a point that realizes the radius of \(M\) and let \(Q\subset M\) and \(r:Q\rightarrow \R^+\)  a function. If \(\tilde p\) is the origin of \(\seg(p)\), and \(\tilde Q := \exp_p^{-1}(Q)\subset S^n_k\), the so called ``Swiss Cheese" volume comparison given in  \cite{GrovePet3} says, \[\vol \left(M-\bigcup_{q\in Q}B(q,r(p))\right)\leq \vol \left(D^n_k(\tilde p,\rad M)-\bigcup_{\tilde q\in \tilde Q} B(\tilde q,r\circ\exp_p(\tilde q))\right).\]

Now let \(\tilde p_i\) be the origin of \(\seg(p_i).\) By Proposition \ref{volcorprop}, for each \(i\), \[\seg(p_i)\subset I(\tilde p_i,r):= \bigcap_{q_i\in A_{r,h}(p_i)}D^n_k( \tilde c_{v_{q_i}},r),\] and a straightforward modification of the above shows\begin{eqnarray}\label{cheesier}\qquad\vol \left(M_i-\bigcup_{q\in Q}B(q,r(p))\right)&\leq& \vol \left(I(\tilde p_i,r)-\bigcup_{\tilde q\in \tilde Q} B(\tilde q,r\circ\exp_p(\tilde q))\right).\end{eqnarray}

 \begin{lemma} \label{toplemma} The map \(\exp_p:L^n_k(h,r)\rightarrow X\) satisfies \begin{enumerate}\item \(\left.\exp\right|_{\mathring L^n_k(h,r)}\) is injective,\item \(\exp_p\) preserves the length of paths,\item \(\left.\exp_p\right|_{B^{n-1}_i}\) is at most \(2\) to \(1\), and
\item there is a positive integer \(c(n,k,h,r)\) such that \(\left.\exp_p\right|_{S_0}\) is no more than \(c\) to \(1\). 
\end{enumerate}
\end{lemma}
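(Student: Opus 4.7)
The plan is to use the Swiss-cheese comparison (\ref{cheesier}) together with the equality $\vol X = \vol L^n_k(h,r) = \vol \seg(p)$ established in the proof of Proposition \ref{seglens}. The mechanism is uniform across all four parts: every ``collision'' $\exp_p(\tilde u) = \exp_p(\tilde v)$ with $\tilde u \neq \tilde v$ crams two disjoint pieces of $L^n_k(h,r)$ into a single small ball of $X$ via the 1-Lipschitz map, producing a volume deficit that must vanish in the limit. The only thing that changes from part to part is the leading-order volume of a small ball of $L^n_k(h,r)$ around the point in question, which depends on its local geometry (full ball, half-ball, or wedge).

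For Part (1), suppose distinct interior points $\tilde u \neq \tilde v \in \mathring L^n_k(h,r)$ satisfy $\exp_p(\tilde u)=\exp_p(\tilde v)=x$. Choose $\varepsilon>0$ small enough that $B(\tilde u,\varepsilon)$ and $B(\tilde v,\varepsilon)$ are disjoint and lie in $\mathring L^n_k(h,r)$, so each has the full $S^n_k$-ball volume $V_k(\varepsilon)$. Approximate $\tilde u,\tilde v$ by $\tilde u_i,\tilde v_i\in \seg(p_i)$, set $u_i:=\exp_{p_i}(\tilde u_i)$, and apply (\ref{cheesier}) to the pair $\{u_i\}$ with constant radius $\varepsilon$; since $\tilde v_i \in I(\tilde p_i,r)\setminus B(\tilde u_i,\varepsilon)$ for all large $i$, its ball contributes an additional $V_k(\varepsilon)$ of volume to the right-hand side that is unaccounted for on the left. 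Passing $i\to\infty$ and using Bishop--Gromov $\vol B(x,\varepsilon)\leq V_k(\varepsilon)$ gives
\[
\vol L^n_k(h,r) \;=\; \vol X \;\leq\; \vol L^n_k(h,r) \;-\; V_k(\varepsilon),
\]
a contradiction. Part (2) is then a consequence of (1): the 1-Lipschitz map $\exp_p$, restricted to $\mathring L^n_k(h,r)$, is injective and volume-preserving (since $\vol(\exp_p(\mathring L^n_k(h,r)))=\vol X=\vol L^n_k(h,r)$), hence a local isometry by the standard rigidity for length spaces of equal volume; length preservation along arbitrary paths then holds on the interior and extends to the boundary by continuity of $\exp_p$.

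For Part (3), the identical argument is run at a point $\tilde u \in B^{n-1}_i$, which is a smooth boundary point of $L^n_k(h,r)$, so $\vol(B(\tilde u,\varepsilon)\cap L^n_k(h,r)) = \tfrac{1}{2}V_k(\varepsilon) + o(V_k(\varepsilon))$. Three distinct preimages at a common image $x$ would funnel $\tfrac{3}{2}V_k(\varepsilon) + o(V_k(\varepsilon))$ of domain volume into a ball of volume at most $V_k(\varepsilon)$, contradicting the equality once $\varepsilon$ is small enough. For Part (4), at $\tilde u \in S_0$ the tangent cone to $L^n_k(h,r)$ splits orthogonally as $W_\alpha \times T_{\tilde u}S_0$, where $W_\alpha$ is a two-dimensional Euclidean wedge of angle $\alpha = \pi - \angle \tilde a_1\tilde u\tilde a_2 \in (0,\pi)$. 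This angle is determined by the isoceles triangle with $|\tilde u\tilde a_1|=|\tilde u\tilde a_2|=r$, $|\tilde a_1\tilde a_2|=2(r-h)$ in $S^n_k$ via the law of cosines (\ref{lawofcosines}), so it is an explicit function of $n,k,h,r$. Consequently $\vol(B(\tilde u,\varepsilon)\cap L^n_k(h,r)) = \tfrac{\alpha}{2\pi}V_k(\varepsilon) + o(V_k(\varepsilon))$, and $N$ distinct preimages of one point of $X$ force $N \tfrac{\alpha}{2\pi} + o(1) \leq 1$, yielding the bound $c := \lfloor 2\pi/\alpha \rfloor$.

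The main technical obstacle is the order-of-limits in these volume comparisons: (\ref{cheesier}) is only an inequality on each $M_i$, and the wedge-volume expansions have $o(V_k(\varepsilon))$ errors. This is handled by fixing $\varepsilon$ first, passing $i\to\infty$ with the convergence $\seg(p_i)\to L^n_k(h,r)$ and $M_i\to X$ to propagate (\ref{cheesier}) to the limit, and only then letting $\varepsilon\to 0$ to extract the strict inequality whenever the postulated multiplicity exceeds the permitted geometric bound.
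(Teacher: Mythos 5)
Your overall mechanism --- the Swiss-cheese comparison (\ref{cheesier}) together with $\vol M_i\to\vol L^n_k(h,r)$ and the local volume density at interior, smooth-boundary, and edge points of $L^n_k(h,r)$ --- is exactly the paper's approach; the paper defers Parts (1)--(3) to Lemma~2.5 of \cite{GrovePet3} and proves Part~(4) by this very comparison, and your explicit $c=\lfloor 2\pi/\alpha\rfloor$ is the paper's density-ratio constant up to rounding. Parts~(1), (3), (4) of your argument are sound modulo a notational slip in Part~(1): you must apply (\ref{cheesier}) to the pair $Q_i=\{\exp_{p_i}(\tilde u_i),\exp_{p_i}(\tilde v_i)\}$, not the singleton $\{u_i\}$, so that \emph{both} $\tilde u_i$ and $\tilde v_i$ lie in $\exp_{p_i}^{-1}(Q_i)$ and contribute disjoint balls on the right of (\ref{cheesier}); with the singleton there is no reason for $\tilde v_i$ to be a preimage of $u_i$ at finite $i$, only in the limit.

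The genuine gap is Part~(2). The interior local isometry gives length preservation for paths in $\mathring L^n_k(h,r)$, but ``extends to the boundary by continuity'' does not work as stated. Length of curves is only lower semicontinuous under uniform convergence, and the local-isometry radius at $\tilde u_\delta\in\mathring L^n_k(h,r)$ shrinks to $0$ as $\tilde u_\delta$ approaches $\partial L^n_k(h,r)$; so approximating a boundary path $\gamma$ by interior paths $\gamma_\delta$ with $\ell(\gamma_\delta)\to\ell(\gamma)$ only yields $\ell(\exp_p\circ\gamma)\leq\liminf_\delta \ell(\exp_p\circ\gamma_\delta)=\ell(\gamma)$, which is the inequality already furnished by $1$-Lipschitzness, not the needed lower bound $\ell(\exp_p\circ\gamma)\geq\ell(\gamma)$. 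The boundary case is not optional --- it is exactly what Lemma~\ref{identification} uses to show the gluing map $f$ is $1$-Lipschitz --- and it requires its own Swiss-cheese argument: if $\exp_p$ strictly contracted the distance between two nearby boundary points, two disjoint half-balls (or wedge-balls) of the model would map into a single ball of $X$ of insufficient volume, contradicting $\vol X=\vol L^n_k(h,r)$. This is the content of the corresponding step in \cite{GrovePet3}, to which the paper defers.
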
           
\begin{proof} 

   Up to needing Equation (\ref{cheesier}), the proofs of Parts (1) - (3)  are identical to the proofs of the analogous parts of Lemma 2.5 in \cite{GrovePet3}.  We give the proof of Part (4) which is similar to the proof of part (3) (cf. \cite{NL}). 

Let \(\tilde q\in S^{}_0\) and \(\rho>0\). Let \(B(\tilde q,\rho)\) be a metric ball in \(S^n_k\) centered at \(\tilde q\) of radius \(\rho\). By Bishop-Gromov, the function\[\rho\rightarrow\frac{\vol B(\tilde q, \rho)}{\vol (B(\tilde q, \rho)\cap L^n_k(h,r))}\] is nondecreasing. This function is also bounded below by 1, so let \(c(n,k,h,r)\) be the smallest integer larger than \[\lim_{\rho\rightarrow0}\frac{\vol B(\tilde q, \rho)}{\vol (B(\tilde q, \rho)\cap L^n_k(h,r))}.\]
By symmetry, \(c\) is independent of \(\tilde q\). 

Let \( q\in X\). For a contradiction, suppose there are  \(c+1\) distinct points \(\{\tilde x^k\}_{k=1}^{c+1}\subset\partial L^n_k(h,r)\) such that \(\exp_p(\tilde x^k) =  q  \). Choose \(\rho>0\)  so that \(\{B(\tilde x^k,\rho)\}_{k=1}^{c+1}\subset S^n_k\) is a disjoint collection. For each \(i\), let \(\{\tilde x^k_i\}_{k=1}^{c+1}\subset \seg(p_i)\) be chosen  so that \(\lim_i\{\tilde x^k_i\} = \{\tilde x^k\}\) and \(\lim_i \exp_{p_i}(\tilde x^k_i) = \exp_{p}(\tilde x^k) = q\).

Note for any \(\lambda\in (0,1)\) and any   \(k\in\{1,\ldots,c+1\}\), if \(i \) sufficiently large, by Proposition \ref{seglens}, \[\lambda\vol (B(\tilde x^k,\rho)\cap L^n_k(h,r))\leq\vol (B(\tilde x^k_i,\rho)\cap I(\tilde p_i,r)) .\] Therefore, if  \(i\) is large enough, by Equation (\ref{cheesier}), the display above, and the definition of \(c\), \begin{eqnarray*}\vol M_i -\vol\left(\bigcup _{k=1}^{c+1}B(\exp_{p_i}(\tilde x^k_i),\rho)\right) &\leq &\vol I(\tilde p_i,r) - \sum_{k=1}^{c+1}\vol (B(\tilde x^k_i,\rho)\cap I(\tilde p_i,r))\\ &\leq& \vol I(\tilde p_i,r) - \sum_{k=1}^{c+1}\lambda\vol (B(\tilde x^k,\rho)\cap L^n_k(h,r))\\&\leq& \vol I(\tilde p_i,r) - \frac{c+1}{c}\lambda\vol(B(\tilde q,\rho)).  \end{eqnarray*}
However,  \(\bigcup _{k=1}^{c+1}B(\exp_{p_i}(\tilde x^k_i),\rho)\rightarrow B(q,\rho)\), and both \(\vol M_i\) and \(\vol I(\tilde p_i,r)\) converge to \( \vol L^n_k(h,r)\). So, if \(\lambda\) is sufficiently close to \(1\), and \(i\) is sufficiently large, the above inequality provides the desired contradiction.       
\end{proof}
Let \(R_{H_0}:S^n_k\rightarrow S^n_k\) be  reflection over the hyperplane \(H_0\). Note that \(R_{H_0}(D^{n-1}_i)= D^{n-1}_j\) for \(i\neq j\in\{1,2\}.\) Equip \(\partial L^n_k(h,r)\) with the induced length metric from \(S^n_k\). Note this metric restricted to either \(D^{n-1}_1\) or \(D^{n-1}_2\) is Riemannian of constant curvature and with these metrics \(D^{n-1}_1\)and \(D^{n-1}_2\) are isometric.    

\begin{lemma}\label{identification} There is a positive integer \(m\) and an isometry \(\phi: \partial L^n_k(h,r)\rightarrow \partial L^n_k(h,r)\) which fix \(\tilde q_1, \tilde q_2\), leaves \(S_0\) invariant, and satisfies \(\phi^m=\mathrm{id}\). Moreover, \(\exp_p:L^n_k(h,r)\rightarrow X\) induces an isometry between \(X\) and either \begin{enumerate}\item \(L^n_k(h,r)/(\tilde u\sim \phi(\tilde u))\) provided \(\phi\) is an involution, or 
\item \(L^n_k(h,r)/(\tilde u\sim R_{H_0}\circ\phi(\tilde u))\)\end{enumerate}
where \(\tilde u\in \partial L^n_k(h,r)\)\end{lemma}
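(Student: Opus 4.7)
The plan is to analyze the fibers of \(\exp_p\) on \(\partial L^n_k(h,r)\) and produce the isometry \(\phi\) governing them. By Parts (1) and (2) of Lemma \ref{toplemma}, \(\exp_p\) is injective on \(\mathring L^n_k(h,r)\) and preserves the length of every path, so it factors as an isometry \(L^n_k(h,r)/{\sim}\,\longrightarrow X\), where \(\tilde u\sim\tilde v\) iff \(\exp_p(\tilde u)=\exp_p(\tilde v)\). All nontrivial identifications therefore live on \(\partial L^n_k(h,r)=D^{n-1}_1\cup D^{n-1}_2\), and by Part (3) each fiber of \(\exp_p|_{B^{n-1}_i}\) has at most two elements.

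Pick a generic \(\tilde u\in B^{n-1}_1\) with a nontrivial partner \(\tilde v\neq\tilde u\) in its fiber. Since \(\exp_p\) is length-preserving, the local correspondence \(\tilde u\mapsto\tilde v\) is an isometry between neighborhoods of \(\tilde u\) and \(\tilde v\) in \(\partial L^n_k(h,r)\). Because each \(D^{n-1}_i\) is a simply connected domain of constant curvature in the metric induced from \(S^n_k\), any local isometry extends uniquely to a global one by analytic continuation. I would thus obtain a global self-isometry \(\Phi\) of \(\partial L^n_k(h,r)\) whose graph contains every nontrivial pairing on \(B^{n-1}_1\cup B^{n-1}_2\); being globally determined by a local piece, \(\Phi\) either preserves each \(D^{n-1}_i\) or swaps them.

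In the first case, \(\Phi\) is an involution (since the generic fiber on \(B^{n-1}_i\) has exactly two points), and as an isometry of a round disk that preserves the boundary it must fix the center \(\tilde q_i\) and leave \(S_0\) invariant. Setting \(\phi:=\Phi\) and \(m:=2\), the identification is \(\tilde u\sim\phi(\tilde u)\), yielding conclusion (1). In the second case, \(\phi:=R_{H_0}\circ\Phi\) preserves each \(D^{n-1}_i\), fixes \(\tilde q_1\) and \(\tilde q_2\) (again as a disk isometry preserving the boundary setwise), and leaves \(S_0\) invariant; the identification \(\tilde u\sim\Phi(\tilde u)\) rewrites as \(\tilde u\sim R_{H_0}\circ\phi(\tilde u)\), yielding conclusion (2).

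Finally, \(\phi|_{S_0}\) is an isometry of the round \((n-2)\)-sphere \(S_0\); in Case (2) its orbits coincide with the fibers of \(\exp_p|_{S_0}\), which by Part (4) of Lemma \ref{toplemma} have cardinality at most \(c=c(n,k,h,r)\), so \(\phi|_{S_0}\) has order \(m\leq c\). Since \(\phi\) preserves each constant-curvature disk and fixes its center, it is determined by \(\phi|_{S_0}\), so \(\phi^m=\mathrm{id}\). The main obstacle in this outline is the analytic-continuation step: showing that the pointwise pairing given by the fibers of \(\exp_p\) assembles into a single globally defined isometry of \(\partial L^n_k(h,r)\) rather than a patchwork of independent local correspondences. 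This rigidity comes from the length-preserving property of \(\exp_p\) together with the rigidity of isometries of simply connected constant-curvature disks, which are determined by their 1-jet at any single point.
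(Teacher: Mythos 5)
Your outline and the paper's proof share the same skeleton: factor $\exp_p$ through the quotient by its fibers, observe the identification lives on $\partial L^n_k(h,r)$ with at most two preimages on each $B^{n-1}_i$, obtain the dichotomy ``preserves vs.\ swaps the $D^{n-1}_i$,'' and set $\phi = f$ or $\phi = R_{H_0}\circ f$ accordingly, bounding the order on $S_0$ by Part (4) of Lemma \ref{toplemma}. The difference — and the genuine gap — is in how the fiber pairing becomes a single global isometry.

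The paper defines the pairing map $f$ on $B^{n-1}_1\cup B^{n-1}_2$ directly, proves it is \emph{continuous} because a point of discontinuity would force two distinct geodesics in $X$ to emanate from $p$ in the same direction and coincide on an initial segment (a bifurcation, impossible in an Alexandrov space), notes $f$ is $1$-Lipschitz by Part (2), so extends to $\partial L^n_k(h,r)$, and finally gets that $f$ (resp.\ $R_{H_0}\circ f$) is an \emph{isometry} because it is a $1$-Lipschitz map of finite order. You instead try to manufacture a global isometry by analytic continuation of the pairing at a single generic point, and then assert that its graph contains \emph{every} nontrivial pair. You flag this as ``the main obstacle'' and offer the rigidity of isometries of constant-curvature disks as the closing step, but that rigidity only tells you that a \emph{connected} local isometry is determined by its $1$-jet. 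It does not tell you the pairing is connected: nothing in your outline rules out a patchwork in which different connected components of the nontrivial-fiber locus are identified by different local isometries with different $1$-jets, so the continuation of one piece need not match the others. Precisely what is missing is the continuity of the pairing map, and the paper's no-geodesic-bifurcation argument is exactly the idea that supplies it. Relatedly, your claim that ``since $\exp_p$ is length-preserving, the local correspondence is an isometry of neighborhoods'' also needs justification beyond length-preservation; in the paper this is bypassed because $1$-Lipschitz plus finite order already forces isometry. Finally, your argument is written for $h<r$; the case $h=r$ (where $L^n_k(h,r)=D^n_k(r)$ and the level sets $S_t$ argument degenerates) is handled separately in the paper by appealing to Lemma 2.6 of \cite{GrovePet3}, and you should address it.
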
  

\begin{proof}
As in  \cite{GrovePet3}, by Part 3 of Lemma \ref{toplemma}, we can define a map \(f:B_1^{n-1}\cup B_2^{n-1}\rightarrow \partial L^n_k(h,r)\) by \begin{eqnarray*}f(\tilde u) =\left\{\begin{array}{cc}
\tilde u & \text{~if~} \exp^{-1}_p(\exp_p(\tilde u)) = \{\tilde u\} \\

\tilde v & \text{~if~} \exp_p(\tilde u) = \exp_p(\tilde v), \tilde u\neq\tilde v \\
\end{array}\right. . \end{eqnarray*}

As noted in \cite{GrovePet3}, the map \(f\) is continuous as a point of discontinuity would produce a bifurcation of geodesics in
\(X\). By Part 2 of Lemma \ref{toplemma}, \(f\) is \(1\)-Lipschitz, so it uniquely extends to a continuous map \(f: \partial L^n_k(h,r)\rightarrow \partial L^n_k(h,r)\).

Assume first that \(r-h>0\). In this case, \(\lambda_{\tilde p}(\tilde q_1)=\lambda_{\tilde p}(\tilde q_2)>0\). In particular, if \(\tilde x\in D^{n-1}_i\), by Proposition \ref{proposition1}, \(\m(|\tilde p\tilde x|) = \lambda_{\tilde p}(\tilde q_i)\m(|\tilde q_i\tilde x|) - \m(|\tilde p\tilde q_i|)\). Therefore, if \(\tilde x,\tilde y\in \partial L^n_k(h,r)\), then \(|\tilde p\tilde x| = |\tilde p \tilde y|\) if and only if for some \(t\in[-h,h]\), either \(\tilde x,\tilde y \in S_t\) or \(\tilde x\in S_t\) and \(\tilde y\in S_{-t}\). By Parts 1 and 2 of Lemma \ref{toplemma}, it follows that for all \(t\in[0,h]\), either \(f(S_t) = S_t\) or \(f(S_t) = S_{-t}\). Therefore, by continuity of \(f\), it follows that \(f(D^{n-1}_i) = D^{n-1}_j\) for either \(i=j\in\{1,2\}\) or \(i\neq j\in\{1,2\}\).

In the case \(f(D^{n-1}_i) = D^{n-1}_i\) for \(i\in \{1,2\}\) set \(\phi =f\). By Part 3 of Lemma \ref{toplemma}, it follows that \(\phi^2 = \mathrm{id}\), in particular, \(\phi\) must be an isometry.
By Part 1 of Lemma 5, it follows that \(X\) is isometric to \(L^n_k(h,r)/(\tilde u\sim \phi(\tilde u)\)). 

In the case \(f(D^{n-1}_i) = D^{n-1}_j\) for \(i\neq j\in \{1,2\}\), set \(\phi = R_{H_0}\circ f\).  For each \(\tilde u \in S_0\), \(\exp_p^{-1}(\exp_p(\tilde u)) = \{\phi^{k}(\tilde u)\mid k\in \mathbb{N}\}\),  so it follows from Part 4 of Lemma \ref{toplemma} this set can consist of no more that \(c(n,k,h,r)\)-elements. It follows that  \(\phi^m=\mathrm{id}\) for some positive integer \(m\) possibly larger than \(c(n,k,h,r)\).  In particular, \(\phi\) must be an isometry. Again by Part 1 of Lemma 5,  \(X\) is isometric to \(L^n_k(h,r)/(\tilde u\sim R_{H_0}\circ\phi(\tilde u))\).

When \(r=h\) we have \(L^n_k(h,r) = D^n_k(r)\). This case is handled by Lemma 2.6 in   \cite{GrovePet3} where they show that the identification must occur via an isometric involution.  Up to an isometry of \(D^n_k(r) =L^n_k(r,r)\), the conclusion is the same.    \end{proof}
\begin{lemma}\label{manifoldlemma} Let \(c(n,k,h,r)\) be as in Lemma \ref{toplemma}. If \(h<r, \) there is a totally geodesic hyperplane \(P\subset S^n_k\) that contains the geodesic through \(\tilde a_1\) and \(\tilde a_2 \), such that if \(R_P:S^n_k\rightarrow S^n_k\) is reflection over \(P\), then \(X\) is isometric to either 
\begin{enumerate}[(A)]\item \(P^n_k(h,r) = L^n_k(h,r)/(\tilde u\sim R_P(\tilde u))\),   \item\(L^n_k(h,r,\mathrm{id}) = L^n_k(h,r)/(\tilde u\sim R_{H_0}(\tilde u))\),  or \item \(L^n_k(h,R,\phi_m)=L^n_k(h,r)/(\tilde u\sim R_{H_0}\circ\phi_m(\tilde u))\),  and \begin{enumerate}\item \(\phi_m:\partial L^n_k(h,r)\rightarrow \partial L^n_k(h,r) \) is an isometry which leaves \(S_0\) invariant,
\item \(\phi_m\)  has order \(m\in\{2,\ldots,c(n,k,h,r)\}\), and 
\item  the cyclic group \(\mathbb{Z}_m =\langle\phi_m\rangle \) acts freely and orthogonally on \(S_0\) \end{enumerate}

\end{enumerate}
where \(\tilde u\in \partial L^n_k(h,r)\)
\end{lemma}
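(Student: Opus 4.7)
I apply Lemma \ref{identification} to split into two cases: \textbf{Case I}, $X=L^n_k(h,r)/(\tilde u\sim\phi(\tilde u))$ with $\phi^2=\mathrm{id}$, and \textbf{Case II}, $X=L^n_k(h,r)/(\tilde u\sim R_{H_0}\circ\phi(\tilde u))$ with $\phi^m=\mathrm{id}$. In both cases $\phi$ fixes $\tilde q_1,\tilde q_2$, preserves each $D^{n-1}_i$, and leaves $S_0$ invariant. The overall plan is to promote $\phi$ to a global isometry $\bar\phi$ of $S^n_k$, and then use the fact that $X$, being a Gromov-Hausdorff limit of closed Riemannian manifolds, carries no Alexandrov boundary, to pin down $\bar\phi$.

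\emph{Extension step.} Since $D^{n-1}_i$ is a domain in the geodesic sphere $\partial D^n_k(\tilde a_i,r)\subset S^n_k$, an isometry of that sphere fixing $\tilde q_i$ extends uniquely to an isometry of $S^n_k$ fixing both $\tilde a_i$ and $\tilde q_i$. Let $\bar\phi_i\in\mathrm{Isom}(S^n_k)$ be the extension of $\phi|_{D^{n-1}_i}$; each fixes pointwise the geodesic $\gamma$ through $\tilde a_1,\tilde a_2,\tilde q_1,\tilde q_2,\tilde p$. Because $\phi$ is globally defined on $\partial L^n_k(h,r)$, the extensions $\bar\phi_1$ and $\bar\phi_2$ agree on $\gamma\cup S_0$. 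Since $\gamma$ is perpendicular to $H_0$ and $S_0$ affinely spans $H_0$, this agreement set is not contained in any proper totally geodesic subspace of $S^n_k$, forcing $\bar\phi_1=\bar\phi_2=:\bar\phi$. Thus $\phi$ is the restriction of a single global isometry $\bar\phi$ of $S^n_k$ of the same order as $\phi$, fixing $\gamma$ pointwise.

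\emph{Case I gives (A).} If $\bar\phi=\mathrm{id}$ then no identifications on $\partial L^n_k(h,r)$ occur, so $X$ has boundary, contradicting closedness. Otherwise $\bar\phi$ is a nontrivial involution whose differential on the normal $(n-1)$-plane to $\gamma$ is an orthogonal involution. If the $(+1)$-eigenspace of this differential had codimension $\geq 2$, then at a fixed point $\tilde u\in D^{n-1}_i\setminus S_0$, the space of directions of $X$ at $\exp_p(\tilde u)$ would be the quotient of a hemisphere by an involution whose equatorial fixed set has codimension $\geq 2$; such a quotient retains Alexandrov boundary, again contradicting closedness of $X$. The only remaining possibility is that $\bar\phi$ is reflection $R_P$ across a hyperplane $P\subset S^n_k$ containing $\gamma$, yielding $X=P^n_k(h,r)$.

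\emph{Case II gives (B) or (C); freeness is the main obstacle.} If $m=1$ then $\phi=\mathrm{id}$ and $X=L^n_k(h,r,\mathrm{id})$, giving (B). If $m\geq 2$, set $\phi_m:=\bar\phi|_{S_0}$, an orthogonal isometry of order $m$ on $S_0\cong S^{n-2}_1$. It remains to show $\langle\phi_m\rangle=\mathbb{Z}_m$ acts freely on $S_0$. Suppose for contradiction some $\bar\phi^k$, $0<k<m$, fixes $\tilde u\in S_0$; then the orbit of $\tilde u$ has size strictly less than $m$, while generic orbits on $S_0$ have size $m$. The plan is to localize the Bishop-Gromov argument from Lemma \ref{toplemma}(4) at $\tilde u$: an undersized orbit at $\tilde u$ should force a strict volume deficit in the approximants $M_i$ relative to $\vol L^n_k(h,r)$, contradicting $\vol M_i\to\vol L^n_k(h,r)$. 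The bound $m\leq c(n,k,h,r)$ is then immediate from Lemma \ref{toplemma}(4). This freeness step is the crux: the extension argument and Case I are essentially linear algebra once closedness of $X$ is invoked, but converting a drop in orbit size over a single point of $S_0$ into a strictly positive volume defect in the $M_i$ is the delicate quantitative step that goes beyond the qualitative multiplicity bound already available.
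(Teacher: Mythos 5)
Your proposal has the right high-level split (via Lemma \ref{identification}) and correctly notes that $\phi$ extends to a global isometry of $S^n_k$, but both the Case I reduction and the Case II freeness step have genuine gaps, and in both places the missing ingredient is the same fact the paper actually uses: by Perelman's Stability Theorem, $X$ is a \emph{topological manifold}, not merely a boundary-free Alexandrov space.

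In Case I, your criterion ``such a quotient retains Alexandrov boundary'' is false. If $\psi$ is the induced involution on the equator $S^{n-2}$ of the hemisphere $\Sigma_{\tilde u}L$, the quotient has Alexandrov boundary only when $\psi=\mathrm{id}$. When $\psi$ has a $(-1)$-eigenspace of dimension $j\geq 2$, the space of directions at the fixed point is something like $\Sigma^{n-2-j}\mathbb{R}P^{j}$, which is a \emph{closed} Alexandrov space (no boundary) even though it is not a sphere -- so closedness of $X$ does not exclude it. What excludes it is that $X$ would then fail to be a topological manifold: the paper identifies $X$ with $\Sigma^j\mathbb{R}P^{n-j}$ (where $j$ is the dimension of the $(+1)$-eigenspace of $\phi_2$ on $\mathbb{R}^n$), and this has the homology of a manifold only for $j=0$ or $j=n-1$; $j=0$ is ruled out because $\phi$ fixes $\tilde q_1,\tilde q_2$. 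Your argument omits the step where Perelman stability enters and would therefore not rule out, e.g., $\phi$ a rotation by $\pi$ in a $2$-plane.

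In Case II you explicitly acknowledge that the freeness of $\langle\phi_m\rangle$ on $S_0$ is only a plan, not a proof, and the plan you sketch is unlikely to work as stated: a non-free point of $S_0$ is a measure-zero phenomenon, and the Bishop--Gromov/Swiss-cheese estimate controls multiplicity from \emph{above}, not volume deficit from a drop in orbit size over a single orbit. The paper instead settles freeness topologically: identifying $X$ with $S^n/\langle\varphi_m\rangle$ (spherical join picture) and using, again, that $X$ is a topological manifold, it iterates through isotropy groups and spaces of directions to force either a free action (giving (C), with $m\le c$ coming from Lemma \ref{toplemma}(4)) or $\phi_m=\mathrm{id}$ (giving (B)). So the essential content of the lemma -- both the exclusion of intermediate $j$ in Case I and the free/trivial dichotomy in Case II -- rests on manifold recognition via Perelman stability, and this is precisely what your write-up replaces with a weaker (and in Case I, incorrect) boundary criterion and, in Case II, with an unexecuted volume plan.
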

\begin{proof}
 Given that \(X\) is the Gromov-Hausdorff limit of a sequence of Riemannian \(n\)-manifolds with an upper diameter bound, lower curvature bound, and lower volume bound, by Perelman's Stability Theorem, it follows that \(X\) is a topological \(n\)-manifold (see \cite{VK}, Lemma 3.2).

By Lemma \ref{identification}, there are two possibilities for the isometry type of \(X\). 

\underline{Case 1:} \(X\) is isometric to 
\(L^n_k(h,r)/(\tilde u\sim \phi(\tilde u))\) where \(\phi:\partial L^n_k(h,r)\rightarrow \partial L^n_k(h,r)\) is an isometric involution that fix \(\tilde q_1\) and \(\tilde q_2\). 

Identify \(L^n_k(h,r)\) with the unit disk \(D^n\subset\R^n=\R\oplus J\) and the isometry \(\phi\) with a linear involution \(\phi_2:\R^n\rightarrow \R^n\) such that \(\phi_2(J) = J\).  

As in Lemma 2.7 of \cite{GrovePet3}, because \(X\) is a manifold, there are only two possibilities: \(\phi_2=-\mathrm{id}\) or \(\phi_2=R_J\) where \(R_J:\R^n\rightarrow \R^n\) is reflection over \(J\). This follows, as observed in \cite{GrovePet3}, that since \(\phi_2\) is an isometric involution we can assume for some \(j\), \(\phi_2|_{\R^j\times\{0\}} = \mathrm{id}\) and \(\phi_2|_{\{0\}\times{\R^{n-j}} }= -\mathrm{id}\). As \(X = D^n/(\tilde u\sim \phi_2(\tilde u))\) where \(\tilde u\in \partial D^n\), \(X\) must be homeomorphic to the \(j\)-fold suspension \(\Sigma^j\R P^{n-j}\) which  has the homology of a manifold only when \(j = 0\) or \(n-1\).  
 
 Because the isometry \(\phi\) in Lemma \ref{identification} fix \(\tilde q_1\) and \(\tilde q_2\), it follows that \(\phi = R_P\) where \(P\) is a hyperplane that, if \(h<r\), contains the geodesic through \(\tilde a_1\) and \(\tilde a_2\). This gives Part (A).

\underline{Case 2:} \(X\) is isometric to 
\(L^n_k(h,r)/(\tilde u\sim R_{H_0}\circ \phi(\tilde u))\) where \(\phi:\partial L^n_k(h,r)\rightarrow \partial L^n_k(h,r)\) is an isometry that has finite order and fix \(\tilde q_1\) and \(\tilde q_2\).

Let \(C\) be the unit circle in \(\R^2=\mathbb{C}\). For \(j\in \{1,2,\ldots, m\}\), let \(s_j := \{e^{i\theta}\in C\mid 2\pi\frac{j-1}{m}\leq \theta\leq 2\pi\frac{j}{m}\} \). Let \(J =\R^{n-1} \) and identify  \(S_0\subset H_0\) with the unit \((n-2)\)-sphere \(S_J^{n-2}\) in \(J\) and the isometry \(\phi\) with an isometry \(\phi_m\in O(n-1)\) that satisfies \((\phi_m)^m=\mathrm{id}\) for some positive integer \(m\).  

Now let \(\varphi_m:\mathbb{C}\oplus J\rightarrow \mathbb{C}\oplus J\) be an isometry of \(\R^{n+1}\)  defined by \[\varphi_m(z,x) = (e^{\frac{2\pi}{m}i}z, \phi_m(x)).\]
By topologically identifying \(L^n_k(h,r)\) with \(s_1\ast S_J^{n-2}\), it follows  from the definition of \(\varphi_m\) and Part (2) of Lemma \ref{identification} that 
\begin{enumerate}
\item \(\varphi_m\) has order \(m\),
\item the cyclic group \(\mathbb{Z}_m = \langle\varphi_m\rangle\) acts orthogonally on the round sphere \(S^n := C\ast S_J^{n-2}\) where \(\ast\) is the spherical join. 
\item \(X =L^n_k(h,r)/(\tilde u\sim R_{H_0}\circ \phi(\tilde u))\)  is homeomorphic to the quotient \(S^n/\langle\varphi_m\rangle.\)\end{enumerate}

Let \(\pi:S^n\rightarrow S^n/\langle\varphi_m\rangle\) be the projection map. If, in addition, the action of \(\langle\varphi_m\rangle\) on \(S^n\) is free, by Part (4) of Lemma \ref{toplemma} we have \(m\in\{2\ldots,c(n,k,h,r)\}\) and so Part (C) holds.
In particular, \(X\) is homeomorphic to a Lens space \(S^n/\mathbb{Z}_m\). 

Suppose there is a point \(p\in S^n\) for which the isotropy group \(G_1 = \langle\varphi_m\rangle_p\neq\mathrm{id}.\) Since the action of \(G_0 =\langle\varphi_m\rangle\) on \(S^n\) is linear,  \(G_1\) acts orthogonally on the (\(n-1\))-sphere \(S^{n-1}\subset S^n\) at distance \(\pi/2\) away from \(p\).  We can then identify the space of directions  at \(\pi(p)\in S^n/G_0\) with the quotient \(S^{n-1}/G_1\). Since \(S^n/G_0\) is a manifold,  it follows that this quotient \(S^{n-1}/G_1\) is homeomorphic to \(S^{n-1}\) -- which is also a manifold. Iterating this procedure we obtain a sequence of subgroups \(G_k<\cdots<G_1<G_0\)  where  \(G_k\neq \mathrm{id}\) has the following properties: \begin{enumerate}\item \(G_k\) fixes, point-wise, a totally geodesic \(S^{k-1}\subset S^n\), \item \(G_k\) acts freely and orthogonally on a totally geodesic \(S^{n-k}\subset S^n\) at distance \(\pi/2\) away from the fixed set \(S^{k-1}\), \item the quotient  \(S^{n-k}/G_k\) is homeomorphic to \(S^{n-k}\). \end{enumerate}
Since \(G_k\) is cyclic,   we must have \(S^{n-k} = S^1\).  In particular, \(k=n-1\),   and since \(\varphi_m(z,x) = (e^{\frac{2\pi}{m}i}z, \phi_m(x))\), we  have \(\phi_m = \mathrm{id}\). So Part (B) holds. \end{proof}

\begin{proof}[\bf{Proof of Part 2 of Theorem \ref{mainthrm}} (topological version)] The identification spaces \(P^n_k(h,r)\) and  \(L^n_k(h,r,\mathrm{id})\), topologically, are spheres. The proof of Lemma \ref{manifoldlemma} shows \(L^n_k(h,r,\phi_m)\) is homeomorphic to a quotient of \(S^n\) by a free and orthogonal action of \(\mathbb{Z}_m\) where \(m\leq c(n,h,h,r),\) thus is a Lens space. The proof now follows from  Gromov's Compactness Theorem, Perelman's Stability Theorem, and Lemma \ref{manifoldlemma}.    \end{proof} 
\section{Smooth Perturbation of the Limits}

To construct Riemannian metrics that satisfy the hypotheses of Theorem \ref{mainthrm}, we give smooth perturbations of \(L^n_k(h,r,\phi_{m})\) and \(P^n_k(h,r)\), for any \(\phi_{m}\in C(n-2,m)\) as in Example \ref{example1}.
   
   \begin{proof}[\bf{Proof of Part 3 of Theorem \ref{mainthrm}}]   

For perturbations of \(L^n_k(h,r,\mathrm{id})\) and \(P^n_k(h,r)\) we follow \cite{GrovePet3}. For the hyperplane \(J = H_0\) or \(P\), define \(L^{n,+}_{k,J}(r)\) to be one side of \(L^n_k(h,r)\) separated by \(J\) (see Figure \ref{figureP} for \(L^{n,+}_{k,P}(r))\). Isometrically embed  \(L^{n,+}_{k,J}(r)\) into a totally geodesic \(S^n_k\subset S^{n+1}_k\) and take boundaries of smooth, symmetric, convex neighborhoods of  \(L^{n,+}_{k,J}(r)\).

  Let  \(\phi_m\in C(n-2,m)\). For perturbations of \(L^n_k(h,r,\phi_{m})\),  since \(m\leq c(n,k,h,r)\) we can find \(m\) points \(\tilde p_1,\ldots,\tilde p_m\in S^2_k\) that lie in a circle \(C:=S(\tilde q, \rho)\subset S^2_k\) such that \(\rho\leq \frac{1}{2}\diam S^n_k\) is a real number and \(|\tilde p_i\tilde p_{i+1}|=2h\) (indices mod \(m)\). Let \(C^\ast\) be the circle of length \(2mh\) in \(S^2_k\) formed by joining the segments between \(\tilde p_i\) and \(\tilde p_{i+1}\). 
 
Let \(R\in(0,\frac{1}{2}\diam S^n_k]\) be the intrinsic radii of the boundary disks \(D^{n-1}_i\) of \(L^n_k(h,r)\). Let \(\diam S_0\) be the intrinsic diameter  of \(S_0=D^{n-1}_1\cap D^{n-1}_2\) with metric induced from \(D^n_k(\tilde a_1,r)\). For  \(\varepsilon>0\), by using a doubly warped product metric on \([0,R-\varepsilon]\times C\times S^{{n-2}}\) we can construct a smooth metric \(g\) on \(S^n=C\ast S^{n-2}\) such that induced metric on the \(S^{n-2}\) factor  has constant curvature and diameter \(\diam S_0 -\tau(\varepsilon)\) where \(\tau(\varepsilon)\searrow 0\) as \(\varepsilon\rightarrow0\).

Smoothly deform \(g\) on \(S^n\) to a metric \(g_\varepsilon\)  such that outside of an \(\varepsilon\)--neighborhood of \(\{p_1,\ldots,p_m\}\ast S^{n-2}\subset C\ast S^{n-2}\), \(g_\varepsilon\) has constant curvature \(k\).   This gives a smooth metric \(S^n_\varepsilon:=(S^n,g_\varepsilon)\) on \(S_\varepsilon^1\ast S^{n-2}\)  where \(S^1_\varepsilon\) is a circle in \(S^2_k\) that contains the points \(\tilde p_1,\ldots,\tilde p_m\) for every \(\varepsilon>0\) and converges to \(C^\ast\) as \(\varepsilon \rightarrow0\). So, if \(\mathbb{Z}_m\) acts on the \(S^1_\varepsilon\) factor by taking \(\tilde p_i\) to \(\tilde p_{i+1}\), and by \(\langle \phi_{m_i}\rangle\) on the \(S^{n-2}\) factor we obtain a smooth Riemannian metric \(g_\varepsilon\) on a Lens space \(S^n/\mathbb{Z}_m\) with fundamental domain  converging  to \(L^n_k(h,r,\phi_{m})\) as \(\varepsilon\rightarrow 0\). It follows that \(\vol S^n_\varepsilon/\mathbb{Z}_m\rightarrow \vol L^n_k(h,r)\). Note that \( X  = \lim_{\varepsilon\rightarrow 0} S^n_{\varepsilon}\) will be Riemannian if and only if \(C=C^\ast\),  \(C^\ast\)has diameter \(\diam S^n_k\), and \(S_0 =S^{n-2}_1\). In particular, if a Riemannian manifold \(M\) satisifies \(\sec M\geq k, \rad M\leq r\),  \(\sag_rM\leq h\),  and \(\vol M=\vol L^n_k(h,r) \), then \(k>0\), \(r=\frac{1}{2}\diam S^n_k\) and for some \(m\leq c(n,k,h,r)\), \(hm = \frac{1}{2}\diam S^n_k\).   
  \end{proof}



\end{document}